\documentclass[11pt]{amsart}
\usepackage[a4paper,margin=1in]{geometry}
\usepackage[T1]{fontenc}
\usepackage{lmodern}
\usepackage{microtype}
\usepackage{amsmath,amssymb,amsthm,mathtools}
\usepackage{enumitem}
\usepackage{hyperref}
\usepackage[nameinlink,capitalise,noabbrev]{cleveref}
\hypersetup{
 colorlinks=true,
 allcolors=blue,
 pdftitle={Bose--Einstein Condensation without an Initial Low-Energy Concentration Assumption},
 pdfauthor={Siwei Luo and Jian-Guo Liu}
}
\numberwithin{equation}{section}
\newtheorem{theorem}{Theorem}[section]
\newtheorem{proposition}[theorem]{Proposition}
\newtheorem{lemma}[theorem]{Lemma}

\newtheorem{assumption}[theorem]{Assumption}
\theoremstyle{definition}
\newtheorem{definition}[theorem]{Definition}
\theoremstyle{remark}

\crefname{assumption}{Assumption}{Assumptions}
\crefname{remark}{Remark}{Remarks}
\Crefname{assumption}{Assumption}{Assumptions}
\Crefname{remark}{Remark}{Remarks}
\newcommand{\norm}[1]{\left\|#1\right\|}
\newcommand{\abs}[1]{\left|#1\right|}
\title[BEC without an Initial Low-Energy Concentration Assumption]
{Bose--Einstein Condensation without an Initial Low-Energy Concentration Assumption}
\author{Siwei Luo}
\address{School of the Gifted Young, University of Science and Technology of
China, Hefei 230026, China}
\email{luosw@mail.ustc.edu.cn}
\author{Jian-Guo Liu}
\address{Departments of Mathematics and Physics, Duke University, Durham,
North Carolina 27708, USA}
\email{jian-guo.liu@duke.edu}
\date{September 2026}
\subjclass[2020]{35Q20, 82C40, 35R06, 82C26}
\keywords{Bose--Einstein condensation, quantum Boltzmann equation, measure
solution, finite-time condensation, strong convergence to equilibrium}
\thanks{Corresponding author: Siwei Luo
(\href{mailto:luosw@mail.ustc.edu.cn}
{luosw@mail.ustc.edu.cn}).}
\thanks{ORCID: Siwei Luo,
\href{https://orcid.org/0009-0001-1764-0843}
{0009-0001-1764-0843};
Jian-Guo Liu,
\href{https://orcid.org/0000-0002-9911-4045}
{0000-0002-9911-4045}.}
\begin{document}
\begin{abstract}
We prove that semi-strong convergence to a
Bose--Einstein equilibrium below the critical temperature implies finite-time Bose--Einstein condensation (BEC) for conservative isotropic measure solutions of the spatially homogeneous quantum Boltzmann equation with low-momentum scattering degeneracy exponent $0\leq\eta<1$. For every admissible initial measure with $\overline T/\overline T_c<1$, there exists a conservative solution that has a positive zero-energy atom after a finite time without any assumption of initial concentration near zero energy. We also show that it persists and converges to the equilibrium condensate mass, while the full solution converges strongly. The proof introduces a method to obtain condensation from relaxation estimate to Bose--Einstein equilibrium. The relaxation estimate first gives a fixed amount of mass below a small energy level. We then use collision estimates at smaller and smaller energy scales and add their effects in a weighted sum functional with a fixed upper bound. If no atom forms at zero energy, enough mass remains in the positive-energy shells to force this sum to exceed its upper bound in finite time, which leads to condensation in finite time.
\end{abstract}
\maketitle
\section{Introduction}\label{sec:introduction}
\subsection{Model and condensation problem}\label{sec:intro-model}
Bose--Einstein condensation (BEC) is a quantum collective phenomenon: at sufficiently low temperature, a macroscopic fraction of bosonic particles may accumulate in the lowest-energy state. The phenomenon originates from Bose's quantum statistics for indistinguishable particles and Einstein's extension of this statistics to an ideal gas of material particles, which led to the prediction of the condensation transition \cite{Bose1924,Einstein1924,Einstein1925}. A mesoscopic kinetic model for Bose--Einstein particles is the spatially homogeneous quantum Boltzmann equation \begin{equation}\label{eq:BN} \partial_t f(v,t) = \int_{\mathbb R^3}\int_{\mathbb S^2} B(v-v_*,\omega) \bigl[ f'f_*'(1+f)(1+f_*) - ff_*(1+f')(1+f_*') \bigr] \,d\omega\,dv_*, \end{equation} where $f=f(v,t)\geq0$ is the particle number density at time $t$ and velocity $v$, with \[ f_*=f(v_*,t),\qquad f'=f(v',t),\qquad f_*'=f(v_*',t), \] and the post-collisional velocities are \[ v'=v-\bigl((v-v_*)\cdot\omega\bigr)\omega, \qquad v_*'=v_*+\bigl((v-v_*)\cdot\omega\bigr)\omega, \qquad \omega\in\mathbb S^2. \] The quantum Boltzmann equation for Bose particles goes back to Nordheim \cite{Nordheim1928} and Uehling--Uhlenbeck \cite{UehlingUhlenbeck1933}. Its derivation from microscopic quantum dynamics and its mathematical theory have been studied in a variety of settings; see, among others, \cite{BenedettoEtAl2005,Lu2004,Lu2005,BriantEinav2016,LiLu2019}. A characteristic feature of \eqref{eq:BN}, different from the classical Boltzmann equation, is the appearance of the Bose-enhancement factors $1+f$. According to Bose statistics, the presence of particles in a given one-particle state increases the probability that further particles enter the same state. Thus, in a binary collision, transitions into the states $v$ and $v_*$ are enhanced by the factors $(1+f)(1+f_*)$, while transitions into $v'$ and $v_*'$ are enhanced by $(1+f')(1+f_*')$. This statistical amplification of already occupied states is the basic quantum mechanism through which the collision dynamics can drive particles toward highly occupied low-energy states.
For isotropic solutions, we can use the kinetic-energy variable \( x={|v|^2}/{2}. \) In the corresponding measure-valued formulation \(F_t(dx)\), Bose--Einstein condensation is represented by the appearance of a nonzero atom at the zero-energy state $x=0$. Below the critical temperature \(\overline T_c\), the Bose--Einstein equilibrium contains such a zero-energy atom, and this leads to a fundamental question in dynamics: \textit{given an initial state whose mass and energy correspond to a temperature below the critical temperature, can the collision dynamics create a condensate at $x=0$, and can this happen in finite time?}  For the hard-sphere Nordheim equation, finite-time condensation was established by Lu under assumptions on the initial concentration near zero energy \cite{Lu2013,Lu2014}.  Escobedo and Vel\'azquez then established condensation results for general measure-valued initial data under the only assumption \(\overline T/\overline T_c<1\) \cite{EscobedoVelazquez2014,EscobedoVelazquez2015}.
At the same time, the interaction potential has a significant impact on both the occurrence of Bose--Einstein condensation and its long-time convergence. To make the role of the particle interaction more explicit, we now specify the collision kernel. In the weak-coupling regime, microscopic derivations of the quantum Boltzmann equation lead, after a normalization of physical constants, to a collision kernel of the form \cite{BenedettoEtAl2005,ErdosSalmhoferYau2004} \begin{equation}\label{eq:B} B(v-v_*,\omega) = \frac{|(v-v_*)\cdot\omega|}{(4\pi)^2} \Phi\bigl(|v-v'|,|v-v_*'|\bigr), \end{equation}
where, for a real radially symmetric two-body interaction potential $\phi=\phi(|x|)$, \begin{equation}\label{eq:Phi-potential} \Phi(r,\rho) = \bigl(\widehat\phi(r)+\widehat\phi(\rho)\bigr)^2, \qquad r,\rho\geq0. \end{equation}
Here $\widehat\phi$ denotes the radial Fourier transform of $\phi$. The two terms in \eqref{eq:Phi-potential} correspond to the direct and exchange scattering amplitudes for identical bosons, and thus the low-momentum behavior of $\widehat\phi$ directly determines the strength of collisions near zero energy. The hard-sphere model is obtained from the constant choice $\widehat\phi\equiv1/2$, for which $\Phi\equiv1$ and \[ B(v-v_*,\omega) = \frac{1}{(4\pi)^2}|(v-v_*)\cdot\omega|. \]
Cai and Lu \cite{CaiLu2019} studied a class of such
kernels for which $\widehat\phi$ is continuous and nondecreasing and, for
some $b_0>0$ and $0\leq\eta<1/4$, satisfies
\[
 b_0\frac{r^\eta}{1+r^\eta}
 \leq \widehat\phi(r)\leq\frac12,
 \qquad r\geq0.
\]
For $0\leq\eta<1/4$, they proved strong convergence to the
Bose--Einstein equilibrium under the standard low-temperature condition
$\overline T/\overline T_c<1$, without imposing local assumptions on the
initial distribution near zero energy.
Their recent work \cite{CaiLu2026} isolates the properties of the scattering cross section and treats a broader class of kernels in which the scattering factor $\Phi$ need not be explicitly generated by an interaction potential. In particular, they quantify the possible degeneracy of $\Phi$ at low momentum transfer through an exponent $\eta$: for $0\leq\eta<1$, their condensation theory assumes the lower bound \[ \Phi(r,\rho)\geq b_0\min\{1,(r^2+\rho^2)^\eta\}, \] with the symmetry and monotonicity properties of $\Phi$. Thus larger values of $\eta$ allow a stronger weakening of the collision interaction as $(r,\rho)\to(0,0)$. Under this assumption, they obtained convergence to Bose--Einstein condensation and strong convergence for $0\leq\eta<1$ provided the initial data satisfy an additional quantitative low-energy concentration condition. Indeed, with \(0<\alpha<1-\eta,\beta=(1-\eta-\alpha)/2\), and \(p=3/2+\alpha\), they suppose that the initial value \(F_0(dx)\) in energy variable satisfies
\begin{align}\label{eq:ultra-low}
    \inf_{0<\delta\leq 2E/N}\dfrac{F_0([0,\delta])}{\delta^\alpha} &\geq \dfrac{2^\alpha 18}{b_0}\left(\dfrac{4\sqrt2}{3-p}\right)^2\left(\dfrac p \beta\right)^{2p}\left(\dfrac {4E}N\right)^{1/2+2\beta}\max\left\{\dfrac{4E}{N},1\right\}^\eta,\\
N &=\int_{[0,\infty)} dF_0(x),\qquad E=\int_{[0,\infty)}x\,dF_0(x).\notag
\end{align}
This in particular forces the ultra-low-temperature bound \begin{equation}\label{eq:Cai-Lu-temp}
    \dfrac{\overline T}{\overline T_c}<\frac{1}{600}b_0^{2/3}(1-\eta)^2.
\end{equation}
In the opposite direction, under their corresponding upper-degeneracy assumption with $\eta\geq1$, they showed that an initially absent zero-energy atom cannot be dynamically created. Motivated by this threshold behavior, and following their formulation in the range $0\leq\eta<1$, we impose the following assumption.
\begin{assumption}[Assumptions on \(\Phi\)]\label{ass:CL} The scattering factor $\Phi\in C([0,\infty)^2)$ is nonnegative, symmetric, and nondecreasing in each coordinate. For some $b_0\in(0,1)$ and $\eta\in[0,1)$, \begin{equation}\label{eq:Phi-power} b_0\min\{1,(r^2+\rho^2)^\eta\} \leq \Phi(r,\rho) \leq1, \qquad r,\rho\geq0. \end{equation} \end{assumption}
Our main result removes the additional initial low-energy concentration
condition \eqref{eq:ultra-low} for the full range $0\leq\eta<1$.
For $0\leq\eta<1/4$, this conclusion was already obtained in
\cite{CaiLu2019}; thus the new range is $1/4\leq\eta<1$. Consequently, the admissible temperature range is extended from the
ultra-low regime \eqref{eq:Cai-Lu-temp} to the full subcritical range
$\overline T/\overline T_c<1$.
More precisely, once the mass and energy correspond to a Bose--Einstein
equilibrium below the critical temperature, semi-strong relaxation to
that equilibrium, proved in \cite{CaiLu2026}, forces a nonzero zero-energy atom to be present after a finite time.  The condensate mass then converges to its equilibrium
value, and the full solution converges strongly to the Bose--Einstein
equilibrium.
\subsection{Basic concepts and notation}
We work throughout the paper with the isotropic formulation in the energy variable \( x={|v|^2}/{2}\in[0,\infty)\) in 3D, and denote by $F_t(dx)$ the corresponding nonnegative energy measure at time $t$. In this formulation, Bose--Einstein condensation is represented by the appearance of an atom at zero energy. Denote the condensate mass by \( N_c(t):=F_t(\{0\}), \) and write
\begin{equation}\label{eq:measure-atom-decomposition} F_t=N_c(t)\delta_0+F_t^{\circ}, \qquad F_t^{\circ}(\{0\})=0, \end{equation}
where $F_t^{\circ}$ denotes the non-condensed part of the measure. Whenever the non-condensed part is absolutely continuous with respect to $\sqrt{x}\,dx$, we write \begin{equation}\label{eq:measure-density-decomposition} dF_t(x) = g_t(x)\sqrt{x}\,dx+N_c(t)\delta_0(dx). \end{equation}
In particular, if the isotropic velocity distribution has the form \(f(v,t)=g_t({|v|^2}/{2})\), its absolutely continuous energy measure is $g_t(x)\sqrt{x}\,dx$ after dividing out the radial factor $4\pi\sqrt2$. For a nonnegative Borel measure $F$ on $[0,\infty)$ with finite mass and finite first moment, we denote its particle number and energy by
\begin{equation}\label{eq:mass-energy} N(F):=\int_{[0,\infty)}dF(x), \qquad E(F):=\int_{[0,\infty)}x\,dF(x). \end{equation}
For a conservative solution $(F_t)_{t\geq0}$, both quantities are independent of time, and we simply write
\[ N:=N(F_t), \qquad E:=E(F_t). \]
We consider the space of measures
\begin{equation}\label{eq:B1-space} \mathcal B_1^+([0,\infty)) := \left\{ F:\ F\ \text{is a nonnegative Borel measure on }[0,\infty), \quad \int_{[0,\infty)}(1+x)\,dF(x)<\infty \right\}. \end{equation} Thus $F\in\mathcal B_1^+([0,\infty))$ precisely when both $N(F)$ and $E(F)$ are finite.
For particle number $N>0$ and energy $E>0$, we denote by $\overline T$ the temperature determined by particle number and energy, and by $\overline T_c$ the corresponding critical temperature for Bose--Einstein condensation. In the normalization adopted here and in \cite{CaiLu2026}, \begin{equation}\label{eq:temperature-ratio} \frac{\overline T}{\overline T_c} = \frac{(2\pi)^{1/3}[\zeta(3/2)]^{5/3}} {3\zeta(5/2)} \frac{E}{N^{5/3}}, \end{equation}
where \(\displaystyle \zeta(s)=\sum_{n\ge 1}n^{-s}\) is the Riemann zeta function. Accordingly, \( {\overline T}/{\overline T_c}<1 \) is the subcritical-temperature regime for Bose--Einstein condensation. Since $N$ and $E$ are conserved, the ratio $\overline T/\overline T_c$ is fixed along the evolution.
We denote by $F_{\rm be}$ the Bose--Einstein equilibrium having the same particle number $N$ and energy $E$ as the solution. Its measure representation is \begin{equation}\label{eq:Fbe} dF_{\rm be}(x) = g_{\rm be}(x)\sqrt{x}\,dx + N_c\,\delta_0(dx), \qquad g_{\rm be}(x) = \frac{1}{A e^{x/\kappa}-1}, \end{equation}
where $A\geq1$ and $\kappa>0$ are determined by $N$ and $E$ \cite{Lu2004,CaiLu2026}. When $\overline T/\overline T_c\leq1$, one has $A=1$, and the equilibrium condensate mass is
\begin{equation}\label{eq:Nc} N_c:=F_{\rm be}(\{0\}) = \left( 1- \left(\frac{\overline T}{\overline T_c}\right)^{3/5} \right)_+N, \qquad a_+:=\max\{a,0\}. \end{equation}
In particular,
\begin{equation}\label{eq:Nc-positive} N_c>0 \iff {\overline T}/{\overline T_c}<1. \end{equation}
 For a signed Borel measure $\mu$ on $[0,\infty)$, we define \begin{equation}\label{eq:weighted-norms} \|\mu\|_1^\circ := \int_{[0,\infty)}x\,d|\mu|(x), \qquad \|\mu\|_1 := \int_{[0,\infty)}(1+x)\,d|\mu|(x). \end{equation} The quantity $\|\cdot\|_1^\circ$ is a seminorm without information for  mass concentration at $x=0$. We refer to convergence with respect to $\|\cdot\|_1^\circ$ as \emph{semi-strong convergence}, and to convergence with respect to $\|\cdot\|_1$ as \emph{strong convergence}.
 We use the following conventions throughout the paper.  For
nonnegative quantities \(X\) and \(Y\), the notation
\(X\lesssim_{\mathbf p}Y\) means that \(X\leq C_{\mathbf p}Y\) for a
constant \(C_{\mathbf p}>0\) depending only on the parameters listed in
\(\mathbf p\), while \(X\asymp_{\mathbf p}Y\) means that both
\(X\lesssim_{\mathbf p}Y\) and \(Y\lesssim_{\mathbf p}X\) hold.  When
the subscript is omitted, the comparison constant is independent of all
quantities varying in the estimate.  Similarly,
\(O_{\mathbf p}(1)\) denotes a quantity bounded in absolute value by a
constant depending on \(\mathbf p\).  Such constants may change
from line to line.  We write \(a\wedge b=\min\{a,b\}\),
\(\mathbf 1_A\) for the indicator of a set \(A\),
\(\operatorname{supp}\varphi\) for the support of \(\varphi\), and
\[
 \operatorname{Lip}(\varphi)
 =\sup_{x\ne y}\frac{|\varphi(x)-\varphi(y)|}{|x-y|}
\]
for its Lipschitz seminorm.  Finally, \(\delta_0\) denotes the Dirac
measure at zero, \(|\mu|\) the total-variation measure of a signed
measure \(\mu\), and \(\mathrm i=\sqrt{-1}\).
\subsection{Main results}\label{sec:intro-results} After introducing the notations, we can now state our main results rigorously. Our first theorem shows that once a conservative isotropic measure solution relaxes semi-strongly toward a Bose--Einstein equilibrium while \(\overline T/\overline T_c<1\), a zero-energy atom is present after some finite time. Moreover, the condensate mass converges to its equilibrium value, and the semi-strong convergence is upgraded to strong convergence.
\begin{theorem}[Relaxation to condensation] \label{thm:conditional-main} Assume \cref{ass:CL}, and let $(F_t)_{t\geq0}$ be a conservative isotropic measure solution with particle number $N>0$ and energy $E>0$. Let $F_{\rm be}$ be the Bose--Einstein equilibrium with the same particle number and energy. Suppose that \({\overline T}/{\overline T_c}<1, \)
or equivalently, $N_c=F_{\rm be}(\{0\})>0$, and that \(\|F_t-F_{\rm be}\|_1^\circ\to 0\) as \(t\to\infty\). Then there exists a finite time $T\geq0$ such that \begin{equation}\label{eq:atom-positive-main} N_c(T)=F_T(\{0\})>0. \end{equation} Furthermore, the condensate persists for all later times: \begin{equation}\label{eq:atom-persistence-main} N_c(t) \geq e^{-\sqrt{NE}(t-T)}N_c(T)>0, \qquad t\geq T. \end{equation} As $t\to\infty$, \(N_c(t)\to N_c \) and \(\|F_t-F_{\rm be}\|_1\to 0. \)\end{theorem}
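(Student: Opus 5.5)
The proof has three parts: (i) finite-time creation of a condensate by contradiction, using the semi-strong relaxation, (ii) persistence through a lower bound on the loss rate of the atom, and (iii) convergence of the atom mass, which upgrades semi-strong to strong convergence.

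\emph{Step 1: mass at low energy.} From $\|F_t-F_{\rm be}\|_1^\circ\to0$ I first extract a low-energy mass lower bound. Testing against $x\mapsto x\mathbf 1_{x>\delta}$ gives $\int_{(\delta,\infty)}x\,dF_t\to\int_{(\delta,\infty)}x\,dF_{\rm be}$. Conservation of energy then shows that the energy carried by $[0,\delta]$ is small. Using mass conservation and the tail control, I get
\[
F_t([0,\delta])\to N_c+\int_0^\delta g_{\rm be}\sqrt x\,dx\ \ge N_c
\]
up to errors bounded by $O(\delta^{1/2})+o_t(1)$. Hence for every small $\delta$ there is $t_\delta$ with $F_t([0,\delta])\ge N_c/2$ for all $t\ge t_\delta$.

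\emph{Step 2: contradiction if no atom forms.} Suppose $F_t(\{0\})=0$ for all $t$. Then for $t\ge t_\delta$ the mass $N_c/2$ sits in $(0,\delta]$. I decompose $(0,\delta]$ into dyadic shells $I_k=(2^{-k-1}\delta,2^{-k}\delta]$. For each shell I use the weak formulation of the isotropic equation with a test function $\varphi_k$ that is concave, Lipschitz and localized at scale $2^{-k}\delta$, for instance $\varphi_k(x)=(1-x/(2^{-k}\delta))_+$.

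The cubic Bose-enhancement term is the key ingredient. Combined with the lower bound $\Phi\ge b_0\min\{1,(r^2+\rho^2)^\eta\}$ from \cref{ass:CL}, it should yield an estimate of the form
\[
\frac{d}{dt}\int\varphi_k\,dF_t\ \ge\ c\,b_0\,(2^{-k}\delta)^{\eta-1/2}\Bigl(\int_{I_k}dF_t\Bigr)^{3}\cdot(\text{scale factors})-\text{(quadratic error)}.
\]
The point is that concavity of $\varphi_k$ makes the leading cubic term nonnegative. The quadratic and linear terms are bounded by $C\sqrt{NE}$ times lower powers.

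I then form the weighted sum $\Psi(t)=\sum_k w_k\int\varphi_k\,dF_t$, with weights $w_k$ chosen so that $\sum_k w_k<\infty$. This makes $\Psi\le N\sum w_k$ uniformly in $t$. At the same time, by pigeonholing the mass $N_c/2$ over the shells, $\sum_k w_k(2^{-k}\delta)^{\eta-1}(F_t(I_k))^3$ is bounded below by a fixed constant. The restriction $\eta<1$ is exactly what allows weights that are summable but still make the weighted cubic sum large: one takes $w_k\sim 2^{-\epsilon k}$ with $\epsilon<1-\eta$ against shell masses of size $\gtrsim k^{-2}$.

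Integrating from $t_\delta$ to $t_\delta+T_0$ gives $\Psi(t_\delta+T_0)\ge cT_0-C$, which contradicts the bound on $\Psi$ for large $T_0$. Hence there is $T$ with $F_T(\{0\})>0$.

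\emph{Main obstacle.} Step 2 is the difficult part: making the shell collision estimates rigorous for measure solutions. One has to control the interaction between mass in different shells, handle the degeneracy of $\Phi$ at small momenta, and absorb the negative quadratic terms. My first attempt would be to choose $\delta$ small relative to $N_c$, so that the cubic term dominates at every scale $2^{-k}\delta\le\delta$.

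\emph{Step 3: persistence and convergence.} For persistence I would use the known atom equation for conservative measure solutions, as in \cite{CaiLu2026}. There the outflow from $\{0\}$ is bounded by the linear loss term, giving
\[
\frac{d}{dt}N_c(t)\ge-\sqrt{NE}\,N_c(t)
\]
in the integral sense. Gronwall's inequality then gives \eqref{eq:atom-persistence-main}.

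For $N_c(t)\to N_c$ I use Step 1 with $\delta\to0$ to control the limit from above: $\limsup F_t((0,\delta])\lesssim\delta^{1/2}$ would give $\liminf N_c(t)\ge N_c$. The matching upper bound follows from mass conservation: $N_c(t)=N-F_t((0,\infty))$, and the mass of $F_t$ on $(\delta,\infty)$ converges to that of $F_{\rm be}$. The regime of small positive energies needs a separate argument. Once the atom exists, I would apply Step 2's shell argument again to show that $F_t((0,\delta])$ cannot retain mass exceeding $\int_0^\delta g_{\rm be}\sqrt x\,dx+o(1)$: the positive atom absorbs it through the enhanced collision rate. Finally,
\[
\|F_t-F_{\rm be}\|_1\le \|F_t-F_{\rm be}\|_1^\circ+|N_c(t)-N_c|+|F_t^\circ-F_{\rm be}^\circ|([0,\infty)),
\]
and the last term is handled by splitting at $\delta$. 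Each of the three terms tends to zero, which gives strong convergence.
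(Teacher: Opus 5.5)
Your Step 2 is the heart of the theorem, and it rests on a per-shell estimate that is false. A gain proportional to $F_t(I_k)^3$ cannot hold. The cubic form vanishes on the diagonal ($\Delta\varphi(p,p,p)=0$), and more generally $\abs{\Delta_{\rm sym}\varphi(x,y,z)}\le\operatorname{Lip}(\varphi')(y-x)^2$. So if the mass of $F_t$ in $I_k$ is a point mass, or sits in a window much narrower than the shell, the shell's self-interaction gives essentially nothing, however large $F_t(I_k)$ is. Bose enhancement needs occupied target states at lower energy and cannot start from nothing. Two smaller points: the nonnegativity in \cref{lem:convex-positivity} requires convexity, not concavity; and $(1-x/a)_+$ is not in $C_b^{1,1}$, so it is not an admissible test function.

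In the paper the quadratic term is not an error to absorb but the indispensable seed. With $\psi_{s,m}$ supported strictly below the shell, pairs $y,z$ in the shell give $\mathcal J[\psi](y,z)\gtrsim b_0\varepsilon^{\eta-1}s^{3/2}$ through the $\sqrt x\,dx$ integral over $x\le s/2$. The cubic term, with $x$ below and $y,z$ in a common cell of width $s$, gives a gain proportional to $A^2U$. Together these yield \eqref{eq:relative-Uk-growth}, $U_k'+c_0U_k\ge\gamma_kA_k^2(U_k+Q_k/2)$.

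Even this correct inequality does not close with a linear weighted sum $\sum_k w_kU_k$. From the seed alone, reaching $U_k\ge h$ requires $\int A_k^2\,dt\gtrsim h/(\gamma_kQ_k)$, with $\gamma_kQ_k\asymp b_0\varepsilon_k^{\eta+1/2}(1+k)^{-3/2}$. The analogue of \eqref{eq:weight-optimality} would then need $\sum_k(\gamma_kQ_k)^{-1/2}<\infty$, which fails.

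The missing idea is to measure progress logarithmically, $Z_k=\log(1+2e^{c_0(t-t_0)}U_k/Q_k)$. Then $Z_k'\ge\gamma_kA_k^2$, and only the level $\Lambda_k(h)\asymp_h 1+k$ has to be reached. The weights $\omega_k=\sqrt{\Lambda_k/\gamma_k}\asymp b_0^{-1/2}(1+k)\varepsilon_k^{(1-\eta)/2}$ are summable precisely because $\eta<1$. One must also truncate at completion and show, via the index $\nu(t)$ and $\psi_{s_k,m_k}\le\varphi_{\varepsilon_{k+2}}$, the following: for a fixed fraction $\delta\in(0,1/2)$, while the atom stays below $(1-\delta)e^{-c_0(t-t_0)}h$, a mass $\ge\delta e^{-c_0}h$ remains in shells that are still incomplete and therefore still feed $\mathcal P'$.

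The second gap is $N_c(t)\to N_c$. Run under the hypothesis $F_t(\{0\})=0$ for all $t$, your Step 2 only yields \emph{some} time with a positive atom. Your proposed bound $\limsup_{t}F_t((0,\delta])\lesssim\delta^{1/2}$ is exactly what semi-strong convergence cannot see, and ``the atom absorbs it'' is not an argument.

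The paper obtains the lower bound from a quantitative formation statement, with $\delta\in(0,1/2)$ again a fraction rather than an energy. The contradiction hypothesis is $F_t(\{0\})<(1-\delta)e^{-c_0(t-t_0)}h$ on a window of length $\tau_j(h,\delta)=2e^{2c_0}\delta^{-2}h^{-2}\Theta_j(h)^2$, which tends to $0$. Since \cref{lem:late-cutoff} gives $N_{0,2}(F_{t_0},\varepsilon_j)\ge h$ for \emph{every} $t_0\ge t_*$, sliding windows $[t-\tau_j,t]$ give $F_t(\{0\})\ge(1-\delta)e^{-c_0\tau_j}h$ for all large $t$. Hence $\liminf_{t}F_t(\{0\})\ge N_c$.

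The rest of your outline is fine once that lower bound is available: Step 1, the persistence bound (this is \cref{lem:persistence}), the upper bound $\limsup_{t}F_t(\{0\})\le N_c$ from mass conservation, and your final splitting argument for $\norm{F_t-F_{\rm be}}_1\to0$.
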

We next combine \cref{thm:conditional-main} with the existence and semi-strong relaxation theory of Cai and Lu \cite{CaiLu2026}. This gives a finite-time condensation result for arbitrary admissible initial data in the entire range $0\leq\eta<1$.
\begin{theorem}[Finite-time condensation for Cai--Lu solutions] \label{thm:CL-main} Assume \cref{ass:CL}. Let $F_0\in\mathcal B_1^+([0,\infty))$ satisfy \begin{equation}\label{eq:CL-initial-main} N(F_0)>0, \qquad E(F_0)>0, \qquad {\overline T}/{\overline T_c}<1, \end{equation} where $\overline T/\overline T_c$ is determined by $N=N(F_0)$ and $E=E(F_0)$ through \eqref{eq:temperature-ratio}. Then, for every fixed \(\lambda\in(1/{20},1/{19})\), there exists a conservative isotropic measure solution $(F_t)_{t\geq0}$ with initial datum $F_0$ such that \begin{equation}\label{eq:semistrong-rate-main} \|F_t-F_{\rm be}\|_1^\circ \leq C_\lambda(1+t)^{-\lambda/2}, \qquad t\geq0, \end{equation} for some constant $C_\lambda>0$. For this solution there exists a finite time $T\geq0$ such that \begin{equation}\label{eq:CL-atom-main} F_T(\{0\})>0,\qquad F_t(\{0\})>0, \quad t\geq T, \end{equation} and \begin{equation}\label{eq:CL-convergence-main} F_t(\{0\})\longrightarrow N_c, \qquad \|F_t-F_{\rm be}\|_1\longrightarrow0 \qquad\text{as }t\to\infty. \end{equation}\end{theorem}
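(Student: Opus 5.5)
The proof of \cref{thm:CL-main} is a direct combination of the Cai--Lu existence and relaxation theory with \cref{thm:conditional-main}. Under \cref{ass:CL} and \eqref{eq:CL-initial-main}, I will quote from \cite{CaiLu2026} the existence theorem for the isotropic equation with measure data in $\mathcal B_1^+([0,\infty))$. For each $\lambda\in(1/20,1/19)$ it gives a conservative isotropic measure solution $(F_t)_{t\geq0}$ with $F_t|_{t=0}=F_0$ that satisfies the algebraic semi-strong rate \eqref{eq:semistrong-rate-main}. The first step is to check that the hypotheses of that result are exactly \cref{ass:CL} plus finite mass and energy, with no low-energy concentration condition like \eqref{eq:ultra-low}. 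Concretely, I need the relaxation estimate in \cite{CaiLu2026} to be proved for general data with $\overline T/\overline T_c<1$, and the concentration assumption to enter only in their condensation statement. I also need the solution to conserve $N$ and $E$, so that $F_{\rm be}$ is the same for all $t$ and $\overline T/\overline T_c$ is time-independent.

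Next, I verify the hypotheses of \cref{thm:conditional-main} for this solution. Conservation gives $N(F_t)=N(F_0)>0$ and $E(F_t)=E(F_0)>0$. The condition $\overline T/\overline T_c<1$ gives $N_c>0$ by \eqref{eq:Nc-positive}. Since $\lambda>0$, \eqref{eq:semistrong-rate-main} implies $\|F_t-F_{\rm be}\|_1^\circ\le C_\lambda(1+t)^{-\lambda/2}\to0$. Applying \cref{thm:conditional-main} then gives a finite $T$ with $F_T(\{0\})>0$, the lower bound \eqref{eq:atom-persistence-main}, and hence $F_t(\{0\})>0$ for all $t\ge T$, which is \eqref{eq:CL-atom-main}. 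It also gives $F_t(\{0\})\to N_c$ and $\|F_t-F_{\rm be}\|_1\to0$, which is \eqref{eq:CL-convergence-main}.

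The only real obstacle is the bookkeeping in the first step: matching notions of solution and normalization. The paper's ``conservative isotropic measure solution'' must coincide with the class used in \cite{CaiLu2026}, including their weak formulation with the collision operator acting on test functions. The constants in \eqref{eq:temperature-ratio} and the equilibrium \eqref{eq:Fbe} must also agree with their conventions, so that their limiting equilibrium is our $F_{\rm be}$. If their rate is stated for the Bose--Einstein equilibrium in velocity variables, I will translate it to the energy variable $x=|v|^2/2$, absorbing the radial factor $4\pi\sqrt2$ into the normalization. Once these identifications are made, the theorem follows immediately.
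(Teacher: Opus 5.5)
Your proposal is correct and follows the paper's own proof: the paper obtains the solution and the rate \eqref{eq:semistrong-rate-main} from \cref{prop:CL-relaxation}, which combines \cite[Theorem~2.2]{CaiLu2026} with \cite[Lemma~2.1]{CaiLu2026} and needs no low-energy concentration or subcritical-temperature hypothesis. It then uses $\overline T/\overline T_c<1\Rightarrow N_c>0$ and $\lambda>0\Rightarrow\|F_t-F_{\rm be}\|_1^\circ\to0$ to invoke \cref{thm:conditional-main}, exactly as you describe.
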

\subsection{Strategy of the proof}\label{sec:intro-strategy}
Fix \(h\in(0,N_c)\).  For \(\varepsilon>0\), set
\[
 \varphi_\varepsilon(x)
 =\left(1-\frac{x}{\varepsilon}\right)_+^2,
 \qquad
 N_{0,2}(F,\varepsilon)
 =\int_{[0,\infty)}\varphi_\varepsilon(x)\,dF(x),
\]
as in \eqref{eq:cutoff-mass}.  For every fixed \(\varepsilon>0\),
we show that semi-strong relaxation to \(F_{\rm be}\), provided by \cite{CaiLu2026}, implies
\(N_{0,2}(F_t,\varepsilon)\geq h\) for all sufficiently large \(t\);
see \cref{lem:late-cutoff}.  We first choose a sufficiently small dyadic
scale \(\varepsilon_j\) and then a time \(t_0\) such that
\(N_{0,2}(F_{t_0},\varepsilon_j)\geq h\).  Writing
\(c_0=\sqrt{NE}\), the persistence estimate
\eqref{eq:cutoff-persistence} then gives
\[
 N_{0,2}(F_t,\varepsilon_j)
 \geq e^{-c_0(t-t_0)}h,
 \qquad t\geq t_0.
\]
To examine the collision dynamics below \(\varepsilon_j\), introduce
the dyadic energy scales and shells
\[
 \varepsilon_k=2^{-k-4},
 \qquad
 S_k=(\varepsilon_{k+1},\varepsilon_k],
 \qquad
 A_k(t)=F_t(S_k).
\]
Thus \(A_k(t)\) is the mass in the \(k\)-th shell.  We also choose
\[
 m_k=4\left(1+\log\frac1{\varepsilon_k}\right),
 \qquad
 s_k=\frac{\varepsilon_k}{4m_k},
\]
so that \(L_k=m_ks_k=\varepsilon_{k+2}\), and use the convex cutoff
\(
 \psi_{s,m}(x)=(e^{-x/s}-e^{-m})_+^2,
\)
whose properties are recorded in \cref{lem:profile}.  At scale \(k\), set
\[
 U_k(t)
 =\int_{[0,\infty)}\psi_{s_k,m_k}(x)\,dF_t(x)
\]
as in \eqref{eq:Uk-Ak}.  The function \(\psi_{s_k,m_k}\) is supported in
\([0,\varepsilon_{k+2}]\), strictly below \(S_k\), and satisfies
\(\psi_{s_k,m_k}\leq\varphi_{\varepsilon_{k+2}}\) by
\cref{lem:profile}.  Hence \(U_k(t)\)
is a weighted moment of the mass below the \(k\)-th shell and
\(
 U_k(t)\leq N_{0,2}(F_t,\varepsilon_{k+2}).
\)
For all sufficiently large \(k\), the one-scale estimate
\eqref{eq:relative-Uk-growth} gives
\[
 U_k'(t)+c_0U_k(t)
 \geq
 \gamma_kA_k(t)^2
 \left(U_k(t)+\frac{Q_k}{2}\right).
\]
Here \(\gamma_k>0\) and \(Q_k>0\) are the constants defined in
\eqref{eq:gamma-Q}, independent of time.  The term
\(\gamma_kA_k(t)^2Q_k/2\), obtained from the quadratic collision form
in \eqref{eq:quadratic-integrated}, is positive whenever \(A_k(t)>0\),
including when \(U_k(t)=0\).  The term proportional to \(U_k(t)\),
obtained from the cubic collision form after controlling the error as
in \eqref{eq:cubic-before-boundary} and \eqref{eq:boundary-error},
describes the Bose-enhanced transfer of particles from \(S_k\) toward
lower energy scales.
We next remove the linear term \(c_0U_k\).  Multiplying by the
integrating factor and using \(t\geq t_0\), we obtain
\[
 \frac{d}{dt}
 \left(e^{c_0(t-t_0)}U_k(t)\right)
 \geq
 \gamma_kA_k(t)^2
 \left(
 e^{c_0(t-t_0)}U_k(t)+\frac{Q_k}{2}
 \right).
\]
Consequently,
\[
 Z_k(t)
 =
 \log\left(
 1+\frac{2e^{c_0(t-t_0)}U_k(t)}{Q_k}
 \right)
\]
satisfies
\(
 Z_k'(t)\geq\gamma_kA_k(t)^2
\)
by \cref{lem:log-growth}; see also \eqref{eq:Z-def} and
\eqref{eq:Z-growth}.
For fixed \(h\), let
\[
 \Lambda_k(h)
 =\log\left(1+\frac{2h}{Q_k}\right),
\]
as in \eqref{eq:Lambda-def}.  We say that scale \(k\) is \(h\)-complete
at time \(t\) if \(Z_k(t)\geq\Lambda_k(h)\), which is equivalent to
\(e^{c_0(t-t_0)}U_k(t)\geq h\); see
\cref{def:complete-scale} and \eqref{eq:level-equivalence}.  Since
\(\psi_{s_k,m_k}\leq\varphi_{\varepsilon_{k+2}}\), an \(h\)-complete
scale satisfies
\[
 N_{0,2}(F_t,\varepsilon_{k+2})
 \geq U_k(t)
 \geq e^{-c_0(t-t_0)}h.
\]
Thus the estimate associated
with \(S_k\) gives the above lower bound for
\(N_{0,2}(F_t,\varepsilon_{k+2})\).  In this sense, the required
transfer from \(S_k\) toward lower energies has been completed.  Hence,
we use
\(
 \min\left\{1,{Z_k(t)}/{\Lambda_k(h)}\right\}
\)
as the completion level at scale \(k\): it is strictly less
than one before the target is reached and equals one after the scale
becomes \(h\)-complete.
To treat all sufficiently small scales \(k\geq j\) simultaneously, we
introduce
\[
 \omega_k(h)
 =\left(\frac{\Lambda_k(h)}{\gamma_k}\right)^{1/2},
 \qquad
 \Theta_j(h)
 =\sum_{k\geq j}\omega_k(h),
\]
as in \eqref{eq:weights}, and define
\[
 \mathcal P(t)
 =
 \sum_{k\geq j}\omega_k(h)
 \min\left\{
 1,\frac{Z_k(t)}{\Lambda_k(h)}
 \right\}
\]
as in \eqref{eq:Pcal}, to record the completion levels of all scales
\(k\geq j\).  Hence
\(
 0\leq\mathcal P(t)\leq\Theta_j(h).
\)
At every scale that is still \(h\)-incomplete,
\(Z_k(t)<\Lambda_k(h)\), and
\[
 \frac{d}{dt}
 \left(
 \omega_k(h)\frac{Z_k(t)}{\Lambda_k(h)}
 \right)
 \geq
 \frac{A_k(t)^2}{\omega_k(h)}.
\]
Summing over all \(h\)-incomplete scales gives
\[
 \mathcal P'(t)
 \geq
 \sum_{\substack{k\geq j\\
                  Z_k(t)<\Lambda_k(h)}}
 \frac{A_k(t)^2}{\omega_k(h)}
\]
for almost every \(t\) by \cref{lem:truncated-sum}; see
\eqref{eq:truncated-sum-derivative}.  Since
\(
 \omega_k(h)
 \asymp_{h,\eta}
 b_0^{-1/2}(1+k)\varepsilon_k^{(1-\eta)/2}
\)
and \(\eta<1\),
\[
 \Theta_j(h)<\infty,
 \qquad
 \Theta_j(h)\to 0
 \quad\text{as }j\to\infty.
\]
We now suppose
\(
 F_t(\{0\})
 <
 (1-\delta)e^{-c_0(t-t_0)}h
\)
for some \(\delta\in(0,1/2)\) throughout a short interval after \(t_0\);
cf. \eqref{eq:contrary-atom-bound}.  Then there can be only finitely many \(h\)-complete scales at any fixed time
in this interval.
If no scale \(k\geq j\) is \(h\)-complete, set \(\nu(t)=j\).  Otherwise,
let
\[
 \nu(t)
 =
 2+\max\{k\geq j:Z_k(t)\geq\Lambda_k(h)\},
\]
as in \eqref{eq:nu-definition}.  Persistence and \(h\)-completion then
give
\[
 N_{0,2}(F_t,\varepsilon_{\nu(t)})
 \geq e^{-c_0(t-t_0)}h;
\]
see \eqref{eq:nu-cutoff-bound}.  Hence
\[
 \begin{aligned}
 \sum_{k\geq\nu(t)}A_k(t)
 =F_t((0,\varepsilon_{\nu(t)}])\geq
 N_{0,2}(F_t,\varepsilon_{\nu(t)})-F_t(\{0\})\geq
 \delta e^{-c_0(t-t_0)}h.
 \end{aligned}
\]
 Thus a definite amount of
positive-energy mass lies in \(h\)-incomplete shells.  On such an
interval of length at most one, Cauchy--Schwarz and the lower bound for
\(\mathcal P'\) yield
\[
 \begin{aligned}
 \mathcal P'(t)\geq
 \sum_{k\geq\nu(t)}
 \frac{A_k(t)^2}{\omega_k(h)}\geq
 \frac{\left(\sum_{k\geq\nu(t)}A_k(t)\right)^2}
 {\sum_{k\geq\nu(t)}\omega_k(h)}\geq
 \frac{\delta^2e^{-2c_0}h^2}{\Theta_j(h)}.
 \end{aligned}
\]
 Set
\(
 \tau_j(h,\delta)
 =
 2e^{2c_0}\delta^{-2}h^{-2}\Theta_j(h)^2
\)
as in \eqref{eq:formation-interval}.  For sufficiently large \(j\), one
has \(\tau_j(h,\delta)\leq1\).  If
\(
 F_t(\{0\})
 <
 (1-\delta)e^{-c_0(t-t_0)}h
\)
held throughout \([t_0,t_0+\tau_j(h,\delta)]\), integration would give
\[
 \mathcal P(t_0+\tau_j(h,\delta))-\mathcal P(t_0)
 \geq2\Theta_j(h).
\]
This contradicts \(0\leq\mathcal P(t)\leq\Theta_j(h)\).  Hence a
positive atom must appear during this finite interval, and the
persistence estimate \eqref{eq:atom-persistence} keeps it positive
afterward; see \cref{thm:low-energy-to-condensate}, particularly
\eqref{eq:atom-lower-bound}.  Once the condensate has formed, a
sliding-time application of the same argument yields
\(F_t(\{0\})\to N_c\), and consequently
\(\norm{F_t-F_{\rm be}}_1\to0\); see
\eqref{eq:sliding-lower}--\eqref{eq:atom-liminf},
\eqref{eq:narrow-from-semistrong}, and \cref{lem:CL-strong}.
\subsection{Organization of the paper}\label{sec:intro-organization}
The rest of the paper is organized as follows. \Cref{sec:model} gives the isotropic weak formulation in the energy
variable and states the results from \cite{CaiLu2026} used in the
proof. \Cref{sec:kernel-estimates} proves lower bounds for the collision
kernel and for the quadratic and cubic collision forms.
\Cref{sec:one-scale} applies these bounds at one fixed energy scale.
\Cref{sec:multiscale} introduces \(Z_k\) at each dyadic scale and
combines the resulting estimates in the weighted sum \(\mathcal P\).
Finally, \cref{sec:atom-formation} proves finite-time formation of a
zero-energy atom, convergence of its mass, and strong convergence to
equilibrium.
\section{Isotropic weak formulation and relaxation estimates}\label{sec:model}
\subsection{The reduced collision operator}
We first expand the Bose factors as
\begin{align*}
 f'f_*'(1+f)(1+f_*)-ff_*(1+f')(1+f_*')
 =(f'f_*'-ff_*)
 +\bigl[f'f_*'(f+f_*)-ff_*(f'+f_*')\bigr],
\end{align*}
and then the quartic terms cancel identically. The first term \(f'f_*'-ff_*\) is the
quadratic gain--loss contribution in usual Boltzmann equations, whereas the second term \(f'f_*'(f+f_*)-ff_*(f'+f_*')\) is cubic,
representing the Bose enhancement of transitions into occupied states.
For isotropic distributions, the collision operator can be written
using the particle energies alone. After integrating over the angular
variables, we can use a reduced kernel \(W\) to describe the collision geometry and the scattering factor \(\Phi\). The
quadratic and cubic contributions then are encoded respectively in
weak collision forms \(\mathcal J\) and \(\mathcal K\) constructed from
this kernel; see \cite[(1.9)--(1.14)]{CaiLu2026}. We now make this
representation precise, beginning with the collision energy variables
and the definition of \(W\).
For the collision variables, take
\[
 x=\frac{|v|^2}{2},\qquad
 y=\frac{|v'|^2}{2},\qquad
 z=\frac{|v_*'|^2}{2},\qquad
 x_*=\frac{|v_*|^2}{2}=(y+z-x)_+.
\]
For fixed $x,y,z$, set $|v-v'|=\sqrt2\,\sigma$.
Since $|v-v'|=|v_*-v_*'|$, the triangle inequalities give
\begin{equation}\label{eq:sigma-pm}
 \sigma_-=
 \max\bigl\{|\sqrt x-\sqrt y|,|\sqrt{x_*}-\sqrt z|\bigr\},
 \qquad
 \sigma_+=
 \min\bigl\{\sqrt x+\sqrt y,\sqrt{x_*}+\sqrt z\bigr\}.
\end{equation}
We use the reduced-kernel representation in
\cite[(1.13)--(1.14)]{CaiLu2026}. For
$\sigma\in[\sigma_-,\sigma_+]$ and $\theta\in[0,2\pi]$, set
\begin{equation}\label{eq:Ystar}
 Y_*=
 \begin{cases}
 \displaystyle\left|
 \sqrt{\left(z-\frac{(x-y+\sigma^2)^2}{4\sigma^2}\right)_+}
 +e^{\mathrm i\theta}
 \sqrt{\left(x-\frac{(x-y+\sigma^2)^2}{4\sigma^2}\right)_+}
 \right|,
 &\sigma>0,\\[1.5ex]
 0,&\sigma=0.
 \end{cases}
\end{equation}
Thus $|v-v_*'|=\sqrt2\,Y_*$. For $x_*xyz>0$, define
\begin{equation}\label{eq:W}
 W(x,y,z)=\frac{1}{4\pi\sqrt{xyz}}
 \int_{\sigma_-}^{\sigma_+}\int_0^{2\pi}
 \Phi(\sqrt2\,\sigma,\sqrt2\,Y_*)\,d\theta\,d\sigma.
\end{equation}
For $x_*xyz=0$, set
\begin{equation}\label{eq:W-boundary}
 W(x,y,z)=
 \begin{cases}
 \displaystyle\frac{\Phi(\sqrt{2y},\sqrt{2z})}{\sqrt{yz}},
 &x=0,\quad y>0,\quad z>0,\\[1.2ex]
 \displaystyle\frac{\Phi(\sqrt{2x},\sqrt{2(z-x)})}{\sqrt{xz}},
 &y=0,\quad 0<x\leq z,\\[1.2ex]
 \displaystyle\frac{\Phi(\sqrt{2(y-x)},\sqrt{2x})}{\sqrt{xy}},
 &z=0,\quad 0<x\leq y,\\[1.2ex]
 0,&\text{otherwise}.
 \end{cases}
\end{equation}
Let $W_H$ denote the hard-sphere kernel obtained from $W$ by setting
$\Phi\equiv1$.  For $x_*xyz>0$,
\begin{equation}\label{eq:WH}
 W_H(x,y,z)=
 \frac{\min\{\sqrt x,\sqrt{x_*},\sqrt y,\sqrt z\}}
 {\sqrt{xyz}}.
\end{equation}
For $x_*xyz=0$, its values are given by
\eqref{eq:W-boundary} with $\Phi\equiv1$
\cite[(1.13), (1.15)--(1.16)]{CaiLu2026}.
\begin{lemma}[Basic properties of \(W\)]
\label{lem:kernel-basic}
The reduced kernel $W$ is a nonnegative Borel function on
$[0,\infty)^3$ and satisfies
\begin{equation}\label{eq:W-upper-global}
 0\leq W(x,y,z)\leq W_H(x,y,z),
 \qquad (x,y,z)\in[0,\infty)^3.
\end{equation}
Moreover,
\begin{equation}\label{eq:W-last-two-symmetry}
 W(x,y,z)=W(x,z,y),\qquad x,y,z\geq0.
\end{equation}
\end{lemma}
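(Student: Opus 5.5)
We treat the three claims of \cref{lem:kernel-basic} separately, following the case structure of the definitions \eqref{eq:W} and \eqref{eq:W-boundary}.

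\emph{Borel measurability and nonnegativity.} Nonnegativity is immediate from $\Phi\geq0$ in \cref{ass:CL}. For measurability on the open region $\{x_*xyz>0\}$:
\begin{itemize}[leftmargin=*]
\item the endpoints $\sigma_\pm$ in \eqref{eq:sigma-pm} are continuous in $(x,y,z)$;
\item the integrand $\Phi(\sqrt2\sigma,\sqrt2 Y_*)$ is continuous in $(x,y,z,\sigma,\theta)$ for $\sigma>0$, since $\Phi$ is continuous and $Y_*$ in \eqref{eq:Ystar} is built from continuous operations.
\end{itemize}
Rewrite the $\sigma$-integral as an integral over a fixed interval $[0,\infty)$ with the indicator $\mathbf 1_{[\sigma_-,\sigma_+]}(\sigma)$. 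The integrand is then jointly Borel and bounded by $1$, so Fubini/Tonelli gives that the integral is Borel in $(x,y,z)$. Dividing by $4\pi\sqrt{xyz}$ preserves measurability. On the boundary set $\{x_*xyz=0\}$, each case in \eqref{eq:W-boundary} is a continuous function on a Borel piece. Gluing finitely many Borel pieces gives a Borel function.

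\emph{Upper bound $W\leq W_H$.} On $x_*xyz>0$, use $\Phi\leq1$ to get
\[
W\leq\frac{1}{4\pi\sqrt{xyz}}\cdot 2\pi(\sigma_+-\sigma_-)_+.
\]
It then suffices to verify the elementary identity
\[
\tfrac12(\sigma_+-\sigma_-)_+=\min\{\sqrt x,\sqrt{x_*},\sqrt y,\sqrt z\}.
\]
To do this, set $a=\sqrt x$, $b=\sqrt y$, $c=\sqrt{x_*}$, $d=\sqrt z$, with the constraint $a^2+c^2=b^2+d^2$. Then
\[
\sigma_+-\sigma_-=\min\{a+b,\,c+d\}-\max\{|a-b|,\,|c-d|\},
\]
which is the minimum of four expressions: $a+b-|a-b|=2\min(a,b)$, $c+d-|c-d|=2\min(c,d)$, and the two cross terms $a+b-|c-d|$ and $c+d-|a-b|$. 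The energy constraint forces each cross term to be at least $2\min(a,b,c,d)$. For example, if $c\geq d$, then $a+b-(c-d)\geq 2\min$ follows from $(a-c)(a+c)=(d-b)(d+b)$ by a short case check.

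Since $W_H$ is by definition the kernel with $\Phi\equiv1$, the same computation with equality presumably underlies \eqref{eq:WH}. We may therefore simply cite \cite{CaiLu2026} for \eqref{eq:WH} and only need $\Phi\leq1$ plus the equality form for $W_H$. On the boundary, the bound is immediate from \eqref{eq:W-boundary} and $\Phi\leq1$, because $W_H$ is given by the same formulas with $\Phi\equiv1$.

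\emph{Symmetry $W(x,y,z)=W(x,z,y)$.} This is the main obstacle.
\begin{itemize}[leftmargin=*]
\item \emph{Boundary.} The cases $y=0$ and $z=0$ in \eqref{eq:W-boundary} swap into each other, using the symmetry of $\Phi$. The case $x=0$ is symmetric because $\Phi$ is symmetric.
\item \emph{Interior.} Here I would argue via the original physical representation rather than by direct manipulation of \eqref{eq:Ystar}. The swap $y\leftrightarrow z$ corresponds to relabeling $v'\leftrightarrow v_*'$, i.e.\ replacing $\omega$ by a unit vector orthogonal to it in the collision plane. In the $\sigma$-representation this exchanges $|v-v'|=\sqrt2\sigma$ with $|v-v_*'|=\sqrt2Y_*$.
\item So the plan is to change variables from $(\sigma,\theta)$ to $(Y_*,\theta')$ and show that the measure $d\theta\,d\sigma$ maps to $d\theta'\,dY_*$, with range $[\sigma_-(x,z,y),\sigma_+(x,z,y)]$. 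This is plausible because both arise as the same surface measure on the sphere of admissible $v_*'$ (for fixed $v$ and energies), which is symmetric in the two outgoing particles.
\item Combined with the symmetry of $\Phi$, this yields the claim.
\end{itemize}
If the change of variables proves messy, a fallback is to cite \cite[(1.13)--(1.14)]{CaiLu2026}, where $W$ is derived from the symmetric angular integral and its symmetry in $y,z$ is inherited from the invariance of the collision integral under $v'\leftrightarrow v_*'$.
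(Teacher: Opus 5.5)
Your arguments for Borel measurability, for $0\le W\le W_H$, and for the symmetry on $\{x_*xyz=0\}$ coincide with the paper's. The identity $\tfrac12(\sigma_+-\sigma_-)=\min\{\sqrt x,\sqrt{x_*},\sqrt y,\sqrt z\}$ is correct, and your case analysis via $a^2+c^2=b^2+d^2$ works. It is not needed, though: $W_H$ is \emph{defined} as $W$ with $\Phi\equiv1$, so $W\le W_H$ on the interior follows from $\Phi\le1$ alone, and the identity only re-derives the closed form \eqref{eq:WH}.

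The real difference is the interior symmetry. The paper does not prove it. It takes only the representation from \cite[(1.13)--(1.14)]{CaiLu2026} and cites \cite[Lemma~6.3, (6.8)]{CaiLu2019} plus the symmetry of $\Phi$ for the identity itself, so that lemma is the reference you should give. Your change-of-variables route would make the lemma self-contained. As written, however, the claim that $d\theta\,d\sigma$ is carried to $d\theta'\,dY_*$ is the entire content of the step, and you only call it plausible; so in its present form the interior case rests on the citation.

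The computation closes in a few lines. For $\sigma\in(\sigma_-,\sigma_+)$, set $q=(x-y+\sigma^2)^2/(4\sigma^2)$, $R=x-q$, $P=z-q$. Then $Y_*^2=P+R+2\sqrt{PR}\cos\theta$, and $\theta\mapsto\tau=Y_*$ pushes $d\theta$ on $[0,2\pi]$ forward to $4\tau D^{-1/2}\,d\tau$ on $\{D>0\}$, where $D=4PR-(\tau^2-P-R)^2$. Expanding gives
\[
 \sigma^2D=-\sigma^2\tau^2(\sigma^2+\tau^2)+2\sigma^2\tau^2(y+z)-\tau^2(x-y)^2-\sigma^2(x-z)^2,
\]
which is invariant under $(\sigma,y)\leftrightarrow(\tau,z)$. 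Moreover, $D>0$ forces $P,R>0$, i.e.\ $\sigma\in(\sigma_-,\sigma_+)$, using $x-y=z-x_*$. Hence
\[
 \int_{\sigma_-}^{\sigma_+}\int_0^{2\pi}\Phi(\sqrt2\,\sigma,\sqrt2\,Y_*)\,d\theta\,d\sigma
 =\iint_{\{\sigma,\tau>0,\ \sigma^2D>0\}}\frac{4\sigma\tau\,\Phi(\sqrt2\,\sigma,\sqrt2\,\tau)}{\sqrt{\sigma^2D}}\,d\sigma\,d\tau .
\]
Renaming $\sigma\leftrightarrow\tau$ and using the symmetry of $\Phi$ then gives $W(x,y,z)=W(x,z,y)$.

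The paper's citation is shorter. This computation removes the dependence on \cite{CaiLu2019} and shows exactly where the symmetry of $\Phi$ enters.
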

\begin{proof}
The functions \(x_*=(y+z-x)_+\) and \(\sigma_\pm\) are continuous in
\((x,y,z)\). By \eqref{eq:Ystar}, \(Y_*\) is continuous when
\(\sigma>0\), and its value at \(\sigma=0\) makes it Borel
measurable. Hence the function
\[
 (x,y,z,\sigma,\theta)\longmapsto
 \begin{cases}
 \displaystyle
 \frac{\Phi(\sqrt2\,\sigma,\sqrt2\,Y_*)}{4\pi\sqrt{xyz}},
 &x_*xyz>0,\quad \sigma_-\leq\sigma\leq\sigma_+,\\[1.2ex]
 0,&\text{otherwise},
 \end{cases}
\]
is jointly Borel on
\([0,\infty)^4\times[0,2\pi]\). Its integral in
\((\sigma,\theta)\) is therefore Borel in \((x,y,z)\). This proves the
measurability of \(W\) on \(\{x_*xyz>0\}\). Since each nonzero expression in
\eqref{eq:W-boundary} is continuous on its stated domain,
\eqref{eq:W-boundary} shows the measurability of \(W\) on
\(\{x_*xyz=0\}\). Thus \(W\) is Borel on \([0,\infty)^3\), and
\(W\geq0\) follows from \(\Phi\geq0\). If \(x_*xyz>0\), then \(0\leq\Phi\leq1\) and \eqref{eq:W} give
\[
 0\leq W(x,y,z)
 \leq
 \frac{1}{4\pi\sqrt{xyz}}
 \int_{\sigma_-}^{\sigma_+}\int_0^{2\pi}
 1\,d\theta\,d\sigma
 =W_H(x,y,z).
\]
On \(\{x_*xyz=0\}\), the same inequality follows directly from
\eqref{eq:W-boundary} and the corresponding boundary values of
\(W_H\). This proves \eqref{eq:W-upper-global}.
For \(x_*xyz>0\), \cite[Lemma~6.3, (6.8)]{CaiLu2019} and the symmetry
of \(\Phi\) give directly
\[
 W(x,y,z)=W(x,z,y).
\]
On \(\{x_*xyz=0\}\), the same identity follows from
\eqref{eq:W-boundary}: the first nonzero expression is symmetric in
\(y,z\), the second and third expressions are exchanged by
\(y\leftrightarrow z\), and all remaining values are zero. This proves
\eqref{eq:W-last-two-symmetry}.
\end{proof}
We now
fix the admissible test spaces. We write
\[
 C_b^1([0,\infty))
 =\{\varphi\in C^1([0,\infty)):\varphi,\varphi'
 \text{ are bounded}\},
\]
and
\[
 C_b^{1,1}([0,\infty))
 =\{\varphi\in C_b^1([0,\infty)):\varphi'\text{ is globally
 Lipschitz on }[0,\infty)\}.
\]
We also write $C_b^2([0,\infty))$ for the space of functions
$\varphi\in C^2([0,\infty))$ such that $\varphi$, $\varphi'$, and
$\varphi''$ are bounded.
The mean-value theorem gives
$C_b^2([0,\infty))\subset C_b^{1,1}([0,\infty))$.
For $\varphi\in C_b^{1,1}([0,\infty))$ define
\begin{align}
 \Delta\varphi(x,y,z)
 &=\varphi(x)+\varphi(x_*)-\varphi(y)-\varphi(z),\label{eq:Delta}\\
 \mathcal K[\varphi](x,y,z)
 &=W(x,y,z)\Delta\varphi(x,y,z),\label{eq:K}\\
 \mathcal J[\varphi](y,z)
 &=\frac12\int_0^{y+z}\mathcal K[\varphi](x,y,z)\sqrt{x}\,dx.
 \label{eq:J}
\end{align}
\begin{lemma}[Well-definedness of \(\mathcal K,\mathcal J\)]
\label{lem:collision-forms-well-defined}
For every $\varphi\in C_b^{1,1}([0,\infty))$,
$\mathcal K[\varphi]$ is bounded and Borel measurable. For every $y,z\geq0$,
the integral defining $\mathcal J[\varphi](y,z)$ is integrable. Moreover, $\mathcal J[\varphi]$ is Borel measurable on
$[0,\infty)^2$ and satisfies
\begin{equation}\label{eq:J-well-defined-bound}
 |\mathcal J[\varphi](y,z)|
 \leq4\norm{\varphi}_\infty(\sqrt y+\sqrt z),
 \qquad y,z\geq0.
\end{equation}
\end{lemma}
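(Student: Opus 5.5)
The plan is to get boundedness of $\mathcal K[\varphi]$ from two competing estimates on $\Delta\varphi$: a crude one, $|\Delta\varphi|\leq4\norm{\varphi}_\infty$, and a second-order one that exploits the collision constraint $x+x_*=y+z$. These are then combined with the upper bound $W\leq W_H$ of \cref{lem:kernel-basic}. The bound on $\mathcal J$ will need only the crude estimate together with a pointwise bound on $W_H\sqrt x$. Measurability is routine: $\Delta\varphi$ is continuous in $(x,y,z)$, since $x_*=(y+z-x)_+$ is continuous, and $W$ is Borel by \cref{lem:kernel-basic}, so $\mathcal K[\varphi]$ is Borel. Once $\mathcal K[\varphi]$ is known to be bounded, the map $(x,y,z)\mapsto \mathbf 1_{\{x\leq y+z\}}\mathcal K[\varphi](x,y,z)\sqrt x$ is Borel and locally integrable, so Fubini--Tonelli gives Borel measurability of $\mathcal J[\varphi]$ in $(y,z)$.

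\textbf{Step 1 (second-difference estimate).} On the collision region $x\leq y+z$ we have $x+x_*=y+z$. Then $\Delta\varphi=\varphi(x)+\varphi(y+z-x)-\varphi(y)-\varphi(z)$ can be written as a double integral of increments of $\varphi'$ over a rectangle with sides $|x-y|$ and $|x-z|$. This gives
\[
 |\Delta\varphi(x,y,z)|\leq \operatorname{Lip}(\varphi')\,|x-y|\,|x-z|.
\]
Because $\Delta\varphi$ is symmetric under $x\leftrightarrow x_*$ and under $y\leftrightarrow z$, we may equally pair $x_*$ with $y,z$. Outside the collision region, $W$ vanishes except on the boundary cases of \eqref{eq:W-boundary}, and those are handled by the same computation.

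\textbf{Step 2 (boundedness of $\mathcal K$).} We split according to which of $x,x_*,y,z$ is smallest. Using the symmetries above, it suffices to treat two cases.

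If $x$ is the smallest, then $W_H\leq\sqrt x/\sqrt{xyz}=1/\sqrt{yz}$, while $|x-y|\leq y$ and $|x-z|\leq z$. Step 1 then gives $|\mathcal K|\leq \operatorname{Lip}(\varphi')\sqrt{yz}$, and the crude bound gives $|\mathcal K|\leq4\norm{\varphi}_\infty/\sqrt{yz}$.

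If $y$ is the smallest, then $W_H\leq 1/\sqrt{xz}$. Moreover $x\leq z$, because $x_*\geq y$, so $|x-y|\leq x$ and $|x-z|=x_*-y\leq z$. This yields the analogous pair $\operatorname{Lip}(\varphi')\sqrt{xz}$ and $4\norm{\varphi}_\infty/\sqrt{xz}$.

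Taking the minimum of the two bounds in each case gives $|\mathcal K[\varphi]|\leq 2\sqrt{4\operatorname{Lip}(\varphi')\norm{\varphi}_\infty}$. The boundary values \eqref{eq:W-boundary} fit into the same scheme; for instance, at $x=0$ one has $W\leq1/\sqrt{yz}$ and $|\Delta\varphi|\leq\operatorname{Lip}(\varphi')\,yz$.

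\textbf{Step 3 (the $\mathcal J$ bound).} For $0<x\leq y+z$, the definition of $W_H$ gives
\[
 W_H\sqrt x=\frac{\min\{\sqrt x,\sqrt{x_*},\sqrt y,\sqrt z\}}{\sqrt{yz}}\leq\frac{\min\{\sqrt y,\sqrt z\}}{\sqrt{yz}}=\frac1{\max\{\sqrt y,\sqrt z\}}.
\]
The boundary points form a null set in $x$ and do not affect the integral. Hence
\[
 |\mathcal J[\varphi](y,z)|\leq\frac12\cdot4\norm{\varphi}_\infty\cdot\frac{y+z}{\max\{\sqrt y,\sqrt z\}}\leq 4\norm{\varphi}_\infty(\sqrt y+\sqrt z),
\]
using $y+z\leq2\max\{y,z\}$. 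In particular the integral is absolutely convergent; when $y=z=0$ it is trivially zero.

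The main obstacle is Step 2. $W_H$ is genuinely unbounded, both near the degenerate corner where all energies are small and for large energies, so neither estimate on $\Delta\varphi$ suffices alone. The work lies in the careful case analysis that pairs the smallest energy with the right differences, so that the second-order cancellation exactly offsets the singularity of $W_H$.
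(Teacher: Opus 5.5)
Your proof is correct in substance, but its central step differs from the paper's. The paper does not prove boundedness of $\mathcal K[\varphi]$; it cites Cai--Lu (Sec.~I.A) for it. Its own work is only the $\mathcal J$ estimate, which is the same as your Step~3: $W\sqrt x\leq W_H\sqrt x\leq 1/\sqrt z$ for $y\leq z$, together with $|\Delta\varphi|\leq4\norm{\varphi}_\infty$ and $y+z\leq 2z$.

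Your Steps~1--2 give a self-contained replacement for the citation. On $\{x\leq y+z\}$ you have $\Delta\varphi=\int_0^{x-y}\bigl(\varphi'(y+u)-\varphi'(x_*+u)\bigr)\,du$, which gives $|\Delta\varphi|\leq\operatorname{Lip}(\varphi')\,|x-y|\,|x-z|$. Balancing this against the crude bound yields $|\mathcal K[\varphi]|\lesssim\sqrt{\operatorname{Lip}(\varphi')\norm{\varphi}_\infty}$, using only $\Phi\leq1$. This gives an explicit constant and shows that the $C^{1,1}$ regularity is needed only for $\mathcal K$, while the $\mathcal J$ bound uses $\norm{\varphi}_\infty$ alone; the paper trades this for brevity.

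One reduction needs more care. $W_H$ is not symmetric under $x\leftrightarrow x_*$, since its denominator is $\sqrt{xyz}$, so the case where $x_*$ is smallest is not a relabeling. It is handled directly: $W_H=\sqrt{x_*}/\sqrt{xyz}\leq1/\sqrt{yz}$ because $x\geq x_*$. Also $|x-y|=z-x_*\leq z$ and $|x-z|=y-x_*\leq y$, which reproduces your first case.

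There is one actual error, in Step~3. The claim that the boundary points form an $x$-null set fails when exactly one of $y,z$ vanishes. For $y=0<z$ one has $x_*xyz=0$ for every $x\in[0,z]$. So \eqref{eq:WH} is never in force, and the whole integrand comes from the second line of \eqref{eq:W-boundary}. The estimate still holds, and the paper treats this case separately: $W(x,0,z)\sqrt x\leq 1/\sqrt z$ for $0<x\leq z$, hence $|\mathcal J[\varphi](0,z)|\leq 2\norm{\varphi}_\infty\sqrt z$. The case $z=0<y$ is symmetric.

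With that case added, your argument is complete. Your Fubini--Tonelli route to measurability of $\mathcal J[\varphi]$ is equivalent to the paper's parameter-integral argument.
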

\begin{proof}
Since $x_*=(y+z-x)_+$ is continuous, $\Delta\varphi$ is continuous in
$(x,y,z)$, and $W$ is Borel measurable by the previous lemma, $\mathcal K[\varphi]=W\Delta\varphi$ is Borel measurable. Its boundedness is recorded in
\cite[Sec.~I.A]{CaiLu2026}. 
We first prove integrability of \(\mathcal J[\varphi]\). Suppose that $0<y\leq z$. For
$0<x<y+z$, one has $x_*=y+z-x>0$, so
\eqref{eq:W-upper-global} and \eqref{eq:WH} give
\[
 W(x,y,z)\sqrt x
 \leq W_H(x,y,z)\sqrt x
 =\frac{\min\{\sqrt x,\sqrt{x_*},\sqrt y,\sqrt z\}}
        {\sqrt{yz}}
 \leq\frac1{\sqrt z}.
\]
Since $|\Delta\varphi|\leq4\norm{\varphi}_\infty$ and
$y+z\leq2z$, it follows that
\begin{align*}
 \frac12\int_0^{y+z}
 |\mathcal K[\varphi](x,y,z)|\sqrt x\,dx\leq
 \frac{2\norm{\varphi}_\infty}{\sqrt z}(y+z)\leq4\norm{\varphi}_\infty\sqrt z.
\end{align*}
The same argument with $y$ and $z$ interchanged covers the case
$0<z<y$. If $y=0<z$, then \eqref{eq:W-upper-global} and the boundary value of
$W_H$ give
\[
 W(x,0,z)\sqrt x\leq\frac1{\sqrt z},
 \qquad 0<x\leq z.
\]
Therefore
\[
 \frac12\int_0^z
 |\mathcal K[\varphi](x,0,z)|\sqrt x\,dx
 \leq2\norm{\varphi}_\infty\sqrt z.
\]
The case $z=0<y$ is analogous, and the integral is zero when
$y=z=0$. These estimates prove integrability of \(\mathcal J[\varphi]\) and
\eqref{eq:J-well-defined-bound}. Finally,
\[
 (x,y,z)\mapsto
 \frac12\mathbf 1_{\{0\leq x\leq y+z\}}
 \mathcal K[\varphi](x,y,z)\sqrt x
\]
is jointly Borel measurable and absolutely
integrable in $x$ for every $y,z\geq0$. The measurability theorem for
parameter integrals therefore shows that
\[
 (y,z)\mapsto
 \frac12\int_0^{y+z}
 \mathcal K[\varphi](x,y,z)\sqrt x\,dx
 =\mathcal J[\varphi](y,z)
\]
is Borel measurable.
\end{proof}
Throughout this paper, when the variables are clear, we write
$dF^{\otimes2}=dF(y)\,dF(z)$.  Similarly,
$dF^{\otimes3}=dF(x)\,dF(y)\,dF(z)$.
\subsection{Measure solutions and the weak formulation}
With $W$, $\Delta\varphi$, $\mathcal K$, and $\mathcal J$ already introduced,
we can now state the definition of measure solutions to the equation \eqref{eq:BN}.  The definition uses $C_b^2$ tests in \cite{CaiLu2026}, and we then extend the test function class to
$C_b^{1,1}$ needed in the subsequent analysis.
Following \cite[Definition~1.1]{CaiLu2026}, we call a family
$(F_t)_{t\geq0}\subset\mathcal B_1^+([0,\infty))$ a \textit{conservative isotropic
measure solution} if
\[
 N(F_t)=N,\qquad E(F_t)=E,\qquad t\geq0,
\]
and if, for every $\varphi\in C_b^2([0,\infty))$, the map
$\displaystyle t\mapsto\int_{[0,\infty)}\varphi(x)\,dF_t(x)$ is $C^1$ and, for every
$t\geq0$, satisfies
\begin{equation}\label{eq:weak}
 \frac{d}{dt}\int\varphi\,dF_t
 =\iint\mathcal J[\varphi]\,dF_t^{\otimes2}
 +\iiint\mathcal K[\varphi]\,dF_t^{\otimes3}.
\end{equation}
Here and below, every unmarked multiple integral is taken over
$[0,\infty)$ in each variable.
\begin{lemma}[Narrow continuity of \((F_t)_{t\geq 0}\)]
\label{lem:narrow-continuity}
Every conservative isotropic measure solution $(F_t)_{t\geq0}$ is
narrowly continuous.  Equivalently, for every
$\varphi\in C_b([0,\infty))$, the map
$\displaystyle t\mapsto\int\varphi\,dF_t$ is continuous.
\end{lemma}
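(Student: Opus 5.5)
The plan is to prove continuity first for test functions $\varphi\in C_b^2([0,\infty))$, where the weak formulation \eqref{eq:weak} applies directly, and then to pass to general $\varphi\in C_b([0,\infty))$ by approximation. Conservation of mass and energy supplies the tightness that makes this approximation uniform in time.

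\emph{Step 1: the $C_b^2$ case.} For $\varphi\in C_b^2([0,\infty))$, the definition of a conservative isotropic measure solution already states that $t\mapsto\int\varphi\,dF_t$ is $C^1$, so it is continuous. We also want a quantitative Lipschitz bound. By \eqref{eq:J-well-defined-bound},
\[
\Bigl|\iint\mathcal J[\varphi]\,dF_t^{\otimes2}\Bigr|\leq 4\|\varphi\|_\infty\iint(\sqrt y+\sqrt z)\,dF_t^{\otimes 2}\leq 8\|\varphi\|_\infty N\,(N+E),
\]
using $\sqrt y\leq 1+y$. For the cubic term we use the boundedness of $\mathcal K[\varphi]$ recorded in the preceding lemma, together with $N(F_t)=N$, to get a bound $C_\varphi N^3$. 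Hence
\[
\Bigl|\int\varphi\,dF_t-\int\varphi\,dF_s\Bigr|\leq C_\varphi|t-s|.
\]
This uniform bound is not strictly needed, since $C^1$ already gives continuity.

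\emph{Step 2: approximation of $C_b$ test functions.} Fix $\varphi\in C_b([0,\infty))$, a time $t_0$, and $\epsilon>0$. Energy conservation gives the uniform tail bound
\[
F_t((R,\infty))\leq E/R\qquad\text{for all }t .
\]
Choose $R$ with $E/R\leq\epsilon$, and a cutoff $\chi\in C_b^2$ with $0\leq\chi\leq1$, $\chi=1$ on $[0,R]$, and $\chi=0$ on $[R+1,\infty)$. By Weierstrass approximation followed by mollification, choose $\psi\in C_b^2$ with $|\varphi-\psi|\leq\epsilon$ on $[0,R+1]$. Then $\psi\chi\in C_b^2$, and
\[
\Bigl|\int\varphi\,dF_t-\int\psi\chi\,dF_t\Bigr|\leq \epsilon N+\|\varphi\|_\infty F_t((R,\infty))\leq \epsilon(N+\|\varphi\|_\infty)
\]
uniformly in $t$. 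Combining this with Step 1 applied to $\psi\chi$ gives
\[
\limsup_{t\to t_0}\Bigl|\int\varphi\,dF_t-\int\varphi\,dF_{t_0}\Bigr|\leq 2\epsilon(N+\|\varphi\|_\infty),
\]
and letting $\epsilon\to0$ proves continuity.

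\emph{Equivalence and main obstacle.} The stated equivalence is simply the definition of narrow continuity for finite measures, so nothing further is needed there. The only real obstacle is the non-compactness of $[0,\infty)$: on $[0,\infty)$ a bounded continuous function cannot in general be uniformly approximated by $C_b^2$ functions. This is handled by the uniform tightness that comes from the conserved energy, as in Step 2.
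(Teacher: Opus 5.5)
Your proof is correct and follows essentially the same route as the paper's: the uniform tail bound $F_t((R,\infty))\le E/R$ from energy conservation, approximation of $\varphi$ on a compact interval by a $C_b^2$ function, and continuity of $t\mapsto\int\psi\,dF_t$ taken directly from the definition of a solution. The differences are cosmetic: your smooth cutoff $\chi$ replaces the paper's requirement $\|\psi\|_\infty\le\|\varphi\|_\infty$, and the Lipschitz bound in Step 1 is, as you note, unnecessary.
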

\begin{proof}
Fix $s\geq0$ and $\varphi\in C_b([0,\infty))$. For every $R>0$,
mass and energy conservation give
\[
 F_t([0,\infty))=N,
 \qquad
 F_t((R,\infty))
 \leq\frac1R\int_{(R,\infty)}x\,dF_t(x)
 \leq\frac ER,
 \qquad t\geq0.
\]
Given $\varepsilon>0$, one can then
choose $\psi\in C_b^2([0,\infty))$ such that
\[
 \sup_{0\leq x\leq R}|\varphi(x)-\psi(x)|<\varepsilon,
 \qquad
 \|\psi\|_\infty\leq\|\varphi\|_\infty.
\]
Splitting the error over $[0,R]$ and $(R,\infty)$ gives
\begin{align*}
 \left|\int\varphi\,d(F_t-F_s)\right|
 &\leq
 \left|\int\psi\,d(F_t-F_s)\right|
 +\int|\varphi-\psi|\,dF_t
 +\int|\varphi-\psi|\,dF_s\\
 &\leq
 \left|\int\psi\,d(F_t-F_s)\right|
 +2N\varepsilon
 +4\|\varphi\|_\infty\frac ER.
\end{align*}
Since $\psi\in C_b^2([0,\infty))$, the definition of a measure
solution implies
\[
 \int\psi\,dF_t\longrightarrow\int\psi\,dF_s
 \qquad\text{as }t\to s.
\]
Consequently,
\[
 \limsup_{t\to s}
 \left|\int\varphi\,d(F_t-F_s)\right|
 \leq
 2N\varepsilon+4\|\varphi\|_\infty\frac ER.
\]
Letting $\varepsilon\downarrow0$ and then $R\to\infty$ proves the
continuity at $s$. Since $s\geq0$ was arbitrary, the proof is complete.
\end{proof}
\begin{lemma}[Extension to $C_b^{1,1}$ test functions]
\label{lem:test-extension}
Under \cref{ass:CL}, let $(F_t)_{t\geq0}$ be a conservative isotropic
measure solution with conserved mass $N$ and energy $E$.  For every
$\varphi\in C_b^{1,1}([0,\infty))$, the map
\[
 t\longmapsto\int_{[0,\infty)}\varphi(x)\,dF_t(x)
\]
belongs to $C^1([0,\infty))$, and \eqref{eq:weak} holds for every
$t\geq0$.  Consequently, for all $0\leq s\leq t<\infty$,
\begin{equation}\label{eq:weak-integrated-C11}
 \int\varphi\,d(F_t-F_s)
 =
 \int_s^t\left(
 \iint\mathcal J[\varphi]\,dF_\tau^{\otimes2}
 +
 \iiint\mathcal K[\varphi]\,dF_\tau^{\otimes3}
 \right)d\tau.
\end{equation}
\end{lemma}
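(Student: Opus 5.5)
The plan is to approximate a given $\varphi\in C_b^{1,1}([0,\infty))$ by functions $\varphi_n\in C_b^2([0,\infty))$ and pass to the limit in the integrated form of \eqref{eq:weak}. Concretely, mollify: $\varphi_n=\varphi*\rho_n$ after extending $\varphi$ to $\mathbb R$ (e.g.\ evenly reflected, or as $\varphi(0)+\varphi'(0)x$ for $x<0$, which keeps $\varphi'$ Lipschitz). Then $\varphi_n\in C_b^2$, with
\[
\|\varphi_n\|_\infty\leq\|\varphi\|_\infty,\qquad \|\varphi_n'\|_\infty\leq\|\varphi'\|_\infty,\qquad \operatorname{Lip}(\varphi_n')\leq\operatorname{Lip}(\varphi').
\]
Moreover $\varphi_n\to\varphi$ and $\varphi_n'\to\varphi'$ uniformly on $[0,\infty)$. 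For each $n$, the definition of the solution gives
\[
\int\varphi_n\,d(F_t-F_s)=\int_s^t\bigl(\iint\mathcal J[\varphi_n]\,dF_\tau^{\otimes2}+\iiint\mathcal K[\varphi_n]\,dF_\tau^{\otimes3}\bigr)d\tau .
\]
The left side converges to $\int\varphi\,d(F_t-F_s)$, since $N$ is finite and the convergence is uniform.

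For the right side, use dominated convergence in $(x,y,z,\tau)$. Pointwise, $\Delta\varphi_n\to\Delta\varphi$, so $\mathcal K[\varphi_n]\to\mathcal K[\varphi]$ and, via the bound in \cref{lem:collision-forms-well-defined}, $\mathcal J[\varphi_n]\to\mathcal J[\varphi]$. The domination for $\mathcal J$ is immediate from \eqref{eq:J-well-defined-bound}: $|\mathcal J[\varphi_n](y,z)|\leq4\|\varphi\|_\infty(\sqrt y+\sqrt z)$, which is $F_\tau^{\otimes2}$-integrable uniformly in $\tau$ because $\sqrt y\leq1+y$ and $N,E$ are conserved.

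\textbf{The main obstacle} is a uniform bound $|\mathcal K[\varphi_n]|\leq C(\|\varphi'\|_\infty+\operatorname{Lip}(\varphi'))$, depending only on the $C^{1,1}$ data. The paper cites \cite[Sec.~I.A]{CaiLu2026} for boundedness of $\mathcal K[\varphi]$, but I need the constant explicitly. I would derive it from $W\leq W_H$ and \eqref{eq:WH}. Assume by symmetry $y\leq z$ with $x_*=y+z-x$. The standard second-difference identity writes
\[
\Delta\varphi=\varphi(x)+\varphi(y+z-x)-\varphi(y)-\varphi(z)
\]
as a double integral of $\varphi''$ over a region of area $\leq\min\{x,x_*,y,z\}\cdot|{\cdot}|$. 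Using instead the Lipschitz form,
\[
|\Delta\varphi|\leq\operatorname{Lip}(\varphi')\,|x-y|\,|x-z|\wedge\bigl(2\|\varphi'\|_\infty\min\{x,x_*,y,z\}\bigr).
\]
Multiplying by $W_H=\min\{\sqrt x,\sqrt{x_*},\sqrt y,\sqrt z\}/\sqrt{xyz}$ and checking the cases (the minimum being $x$, $x_*$, $y$, or $z$) yields boundedness. Near the diagonal the quadratic bound uses $|x-y||x-z|\leq\min(\cdot)\cdot\max(\cdot)$; far from zero it is just $W_H\lesssim1/\max$. The boundary values \eqref{eq:W-boundary} are handled the same way, since there $\Delta\varphi$ vanishes to first order. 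Given this, $|\mathcal K[\varphi_n]|\leq C$ uniformly, and $F_\tau^{\otimes3}$ has total mass $N^3$, so dominated convergence applies and yields \eqref{eq:weak-integrated-C11}.

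Finally, to obtain $C^1$ regularity and \eqref{eq:weak} pointwise, I show that
\[
\tau\mapsto\iint\mathcal J[\varphi]\,dF_\tau^{\otimes2}+\iiint\mathcal K[\varphi]\,dF_\tau^{\otimes3}
\]
is continuous. The $C_b^2$ version of this map is continuous, since it equals a $C^1$ derivative. It converges to the $C^{1,1}$ version uniformly in $\tau$, because the dominating bounds are uniform in $\tau$ and the pointwise convergence can be made uniform on compacts. Uniformity in $\tau$ comes from the fact that tails $F_\tau((R,\infty))\leq E/R$ are uniformly small, and the mollification error $\|\Delta\varphi_n-\Delta\varphi\|$ is controlled on $[0,R]^3$ by the weighted bound above with $\varphi-\varphi_n$ in place of $\varphi$ (its $C^1$ norm tends to $0$, its $\operatorname{Lip}$ of derivative stays bounded). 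A uniform limit of continuous functions is continuous, and the fundamental theorem of calculus then gives the $C^1$ property and \eqref{eq:weak} for every $t$.
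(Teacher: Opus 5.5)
Your route differs from the paper's. The paper gives no argument of its own here: it cites the remark of Cai and Lu \cite[Sec.~I.A]{CaiLu2026} that the test class in their Definition~1.1 can be enlarged from $C_b^2$ to $C_b^{1,1}$, and checks that \cref{ass:CL} implies their hypotheses on $\Phi$. You instead give a self-contained density argument: approximate by $\varphi_n\in C_b^2$, pass to the limit in the integrated identity by dominated convergence, and recover the $C^1$ property from continuity of the right-hand side in $\tau$. That scheme works. Once repaired, it shows that only $W\le W_H$ (that is, $0\le\Phi\le1$), conservation of $N$ and $E$, and the $C_b^2$ weak form are needed. The paper's route is shorter but rests entirely on an external remark. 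However, the estimate you write down for what you correctly identify as the main obstacle is false as stated, even though the conclusion you draw from it is true.

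\emph{The false inequality.} The bound $|\Delta\varphi|\le2\|\varphi'\|_\infty\min\{x,x_*,y,z\}$ fails at $x=0$, $y=z=1$, $x_*=2$. There the right side vanishes. But for $\varphi(u)=e^{-u}$ one has $\Delta\varphi(0,1,1)=\varphi(0)+\varphi(2)-2\varphi(1)=(1-e^{-1})^2>0$, and $W(0,1,1)=\Phi(\sqrt2,\sqrt2)\ge b_0>0$. The same point violates your auxiliary claim $|x-y||x-z|\le\min\cdot\max$.

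\emph{The correct bound.} Pair the terms using $|x-y|=|x_*-z|$ and $|x-z|=|x_*-y|$, which gives $|\Delta\psi|\le2\|\psi'\|_\infty\min\{|x-y|,|x-z|\}$. Now take $y\le z$ and split into three cases. If $x\le y$, then $W_H=1/\sqrt{yz}$ and $|x-y|\le y$. If $y\le x\le z$, then $W_H=1/\sqrt{xz}$ and $|x-y|\le x$. If $x\ge z$, then $W_H=\sqrt{x_*}/\sqrt{xyz}$ and $|x-z|=y-x_*\le y$, with $y\le x$ and $x_*\le z$. The boundary values \eqref{eq:W-boundary} are handled the same way. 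Hence $|\mathcal K[\psi]|\le W_H|\Delta\psi|\le2\|\psi'\|_\infty$, and $\operatorname{Lip}(\psi')$ plays no role.

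\emph{Consequence for your last step.} Since $\mathcal J$ and $\mathcal K$ are linear in the test function, this bound together with \eqref{eq:J-well-defined-bound} and \eqref{eq:extension-half-moment} gives
\[
 \sup_{\tau\geq0}\Bigl|\iint\mathcal J[\varphi_n-\varphi]\,dF_\tau^{\otimes2}+\iiint\mathcal K[\varphi_n-\varphi]\,dF_\tau^{\otimes3}\Bigr|
 \leq8N\sqrt{NE}\,\|\varphi_n-\varphi\|_\infty+2N^3\|\varphi_n'-\varphi'\|_\infty .
\]
So the right-hand sides converge uniformly in $\tau$, and your compact/tail splitting becomes unnecessary.

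\emph{Choice of extension.} Use the affine extension or a one-sided mollifier, not even reflection. With a symmetric mollifier, even reflection forces $\varphi_n'(0)=0$, so $\varphi_n'\not\to\varphi'$ uniformly whenever $\varphi'(0)\neq0$.
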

\begin{proof}
Cai and Lu record that the test-function class in their
Definition~1.1 can be enlarged from $C_b^2([0,\infty))$ to
$C_b^{1,1}([0,\infty))$; see
\cite[Sec.~I.A, paragraph following (1.16)]{CaiLu2026}.
The assumptions on $\Phi$ used there are implied by \cref{ass:CL}.
\end{proof}
For later use, conservation of mass and energy and Cauchy--Schwarz give
\begin{equation}\label{eq:extension-half-moment}
 \int_{[0,\infty)}\sqrt{x}\,dF_t(x)
 \leq
 \left(\int dF_t\right)^{1/2}
 \left(\int x\,dF_t(x)\right)^{1/2}
 =\sqrt{NE},
 \qquad t\geq0.
\end{equation}
\subsection{Localized mass near zero} For $\varepsilon>0$, set
$\varphi_\varepsilon(x)=(1-x/\varepsilon)_+^2$. Following the
notation in
\cite[Sec.~III, notation preceding Lemma~3.1]{CaiLu2026}, define
\begin{equation}\label{eq:cutoff-mass}
 N_{0,2}(F,\varepsilon)
 =\int_{[0,\infty)}\varphi_\varepsilon(x)\,dF(x)
 =\int_{[0,\varepsilon]}
 \left(1-\frac{x}{\varepsilon}\right)^2\,dF(x).
\end{equation}
Since
$\varphi_\varepsilon'(x)=-(2/\varepsilon)(1-x/\varepsilon)_+$, $\varphi_\varepsilon$ is nonnegative, nonincreasing, and
convex, with
$\operatorname{Lip}(\varphi_\varepsilon')=2/\varepsilon^2$.
In particular,
$\varphi_\varepsilon\in C_b^{1,1}([0,\infty))$ and may be used as a
test function in \cref{lem:test-extension}.
If $0<\varepsilon_1<\varepsilon_2$, then
$\varphi_{\varepsilon_1}\leq\varphi_{\varepsilon_2}$.  Moreover,
$\varphi_\varepsilon(x)\downarrow\mathbf 1_{\{0\}}(x)$ for every
$x\geq0$ as $\varepsilon\downarrow0$.  Hence, for every
$F\in\mathcal B_1^+([0,\infty))$, dominated convergence gives
\begin{equation}\label{eq:cutoff-limit}
 N_{0,2}(F,\varepsilon)\downarrow F(\{0\})
 \qquad\text{as }\varepsilon\downarrow0.
\end{equation}
\begin{lemma}[Comparison of low-energy cutoff masses]\label{lem:cutoff-comparison}
If $F,G\in\mathcal B_1^+([0,\infty))$ satisfy \(N(F)=N(G)\), then
\begin{equation}\label{eq:cutoff-comparison}
 \abs{N_{0,2}(F,\varepsilon)-N_{0,2}(G,\varepsilon)}
 \leq\frac2\varepsilon\norm{F-G}_1^\circ.
\end{equation}
\end{lemma}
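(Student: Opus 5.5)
The idea is to use the equal-mass hypothesis to subtract a constant from the test function, so that the new test function vanishes at $x=0$. That is exactly what the seminorm $\|\cdot\|_1^\circ$ can detect, since it carries no information at zero energy.

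\emph{Step 1: shift by a constant.} Set $\mu=F-G$, a finite signed measure. Because $N(F)=N(G)$, we have $\int 1\,d\mu=0$. Hence
\[
 N_{0,2}(F,\varepsilon)-N_{0,2}(G,\varepsilon)
 =\int\varphi_\varepsilon\,d\mu
 =\int(\varphi_\varepsilon(x)-1)\,d\mu(x).
\]

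\emph{Step 2: pointwise bound on the shifted function.} We claim
\[
 |\varphi_\varepsilon(x)-1|\leq\frac{2x}{\varepsilon},\qquad x\geq0.
\]
For $0\le x\le\varepsilon$, the identity $1-(1-x/\varepsilon)^2=(x/\varepsilon)(2-x/\varepsilon)$ gives $1-\varphi_\varepsilon(x)\le 2x/\varepsilon$. Alternatively, one can use the mean value theorem together with $|\varphi_\varepsilon'|\le 2/\varepsilon$ and $\varphi_\varepsilon(0)=1$. For $x>\varepsilon$, we have $|\varphi_\varepsilon(x)-1|=1<x/\varepsilon\le 2x/\varepsilon$.

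\emph{Step 3: conclude.} Combining the two steps,
\[
 \Bigl|\int(\varphi_\varepsilon-1)\,d\mu\Bigr|
 \le\int\frac{2x}{\varepsilon}\,d|\mu|(x)
 =\frac2\varepsilon\|F-G\|_1^\circ,
\]
which is \eqref{eq:cutoff-comparison}. Integrability is not an issue: $\varphi_\varepsilon-1$ is bounded, and $|\mu|$ is finite because both $F$ and $G$ lie in $\mathcal B_1^+$.

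There is no real obstacle here. The only point requiring care is remembering to use the mass equality to remove the value at $x=0$ before estimating.
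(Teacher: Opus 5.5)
Your proof is correct and follows the paper's argument essentially step for step. You use the mass equality to replace $\varphi_\varepsilon$ by $\varphi_\varepsilon-1$, bound $|1-\varphi_\varepsilon(x)|\le 2x/\varepsilon$ separately on $[0,\varepsilon]$ and on $(\varepsilon,\infty)$, and integrate against $|F-G|$.
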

\begin{proof}
Since \(N(F)=N(G)\),
$$\displaystyle \int_{[0,\infty)}1\,d(F-G)=0.$$  Therefore
\[
 \begin{aligned}
 \abs{N_{0,2}(F,\varepsilon)-N_{0,2}(G,\varepsilon)}
 =\left|\int_{[0,\infty)}(\varphi_\varepsilon-1)\,d(F-G)\right|\leq\int_{[0,\infty)}|1-\varphi_\varepsilon|\,d|F-G|.
 \end{aligned}
\]
For $0\leq x\leq\varepsilon$, one has
$1-\varphi_\varepsilon(x)=2x/\varepsilon-x^2/\varepsilon^2
\leq2x/\varepsilon$, whereas for $x>\varepsilon$,
$1-\varphi_\varepsilon(x)=1\leq x/\varepsilon$.  Hence
\[
 \abs{N_{0,2}(F,\varepsilon)-N_{0,2}(G,\varepsilon)}
 \leq\frac2\varepsilon
 \int_{[0,\infty)}x\,d|F-G|(x)
 =\frac2\varepsilon\norm{F-G}_1^\circ.
\]
\end{proof}
\subsection{Semi-strong relaxation and strong convergence criterion}
We record the two consequences of Cai and Lu used below.
\begin{proposition}[Cai--Lu semi-strong relaxation]
\label{prop:CL-relaxation}
Under \cref{ass:CL}, let
$F_0\in\mathcal B_1^+([0,\infty))$ satisfy
$N=N(F_0)>0$ and $E=E(F_0)>0$.  For every
$\lambda\in(1/20,1/19)$, there exist a conservative isotropic measure
solution $(F_t)_{t\geq0}$ with initial datum $F_0$ and a constant
$C_\lambda>0$ such that
\begin{equation}\label{eq:semistrong-rate}
 \norm{F_t-F_{\mathrm{be}}}_1^\circ
 \leq C_\lambda(1+t)^{-\lambda/2},
 \qquad t\geq0,
\end{equation}
where $F_{\mathrm{be}}$ is the equilibrium with mass $N$ and energy $E$,
and $C_\lambda$ depends only on $N,E,b_0,\eta$, and $\lambda$.
\end{proposition}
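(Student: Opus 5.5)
This proposition is not proved in the present paper. It is a restatement of the existence and semi-strong relaxation theorem of Cai and Lu \cite{CaiLu2026}, so the plan is to reduce it to that result by checking hypotheses and normalizations. The first step is to check that \cref{ass:CL} implies the hypotheses on the scattering factor used in \cite{CaiLu2026}. Those hypotheses are:
\begin{itemize}
\item continuity, nonnegativity and symmetry of $\Phi$, and monotonicity in each coordinate;
\item the upper bound $\Phi\leq1$;
\item the lower degeneracy bound $\Phi(r,\rho)\geq b_0\min\{1,(r^2+\rho^2)^\eta\}$ with $0\leq\eta<1$.
\end{itemize}
These are precisely the conditions recorded in \eqref{eq:Phi-power}. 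The second step is to note that any $F_0\in\mathcal B_1^+([0,\infty))$ with $N(F_0),E(F_0)>0$ is an admissible initial datum in their existence theory. That theory produces a conservative isotropic measure solution in the sense of \cite[Definition~1.1]{CaiLu2026}, and this notion coincides with ours by construction, since we adopted their weak formulation \eqref{eq:weak}.

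The substantive step is the identification of the convergence statement. Cai and Lu prove an algebraic rate of convergence of $F_t$ to $F_{\rm be}$ in the weighted seminorm $\int x\,d|F_t-F_{\rm be}|$. The rate is expressed through an exponent $\lambda$ from their entropy-dissipation/relaxation argument, which is admissible exactly in the window $(1/20,1/19)$. I will match their normalization of the energy variable, of $\overline T/\overline T_c$ in \eqref{eq:temperature-ratio}, and of $F_{\rm be}$ in \eqref{eq:Fbe} with ours, so that their seminorm is literally $\|\cdot\|_1^\circ$ from \eqref{eq:weighted-norms}. I will also confirm that their constant depends only on $N,E,b_0,\eta,\lambda$. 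If their estimate is stated only for $t\geq t_1$, or in the form $C(1+t)^{-\lambda/2}$ after an initial time, I will extend it to all $t\geq0$ by enlarging $C_\lambda$. For that, the trivial bound $\|F_t-F_{\rm be}\|_1^\circ\leq 2E$ suffices on bounded time intervals, since both measures have energy $E$.

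The main obstacle is bookkeeping rather than analysis. The existence theorem is for a \emph{particular} solution, namely one constructed by approximation, which is why the statement says ``there exist''. So the care goes into quoting the right theorem in \cite{CaiLu2026}, which combines existence with the relaxation rate, and into verifying that its exponent convention ($\lambda/2$) and the admissible range of $\lambda$ are reproduced exactly as stated here.
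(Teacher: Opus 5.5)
Your plan matches the paper's proof, which is purely a citation: it checks that \cref{ass:CL} supplies the hypotheses of \cite[Theorem~2.2]{CaiLu2026} and then combines that theorem with \cite[Lemma~2.1, (2.1)]{CaiLu2026}. The two bookkeeping points you flag are settled there. First, the $\|\cdot\|_1^\circ$ bound with exponent $\lambda/2$ is not read off a single statement; it is obtained by combining the theorem with that lemma. Second, the window $(1/20,1/19)$ comes from their $\eta$-dependent range, since $\max\{2,(4+2\eta)/3\}=2$ when $\eta<1$.
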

\begin{proof}
The hypotheses of \cite[Theorem~2.2]{CaiLu2026} follow from
\cref{ass:CL}.  Since $0\leq\eta<1$,
$\max\{2,(4+2\eta)/3\}=2$, so the admissible exponent range in that
theorem is $1/20<\lambda<1/19$.  Combining
\cite[Theorem~2.2]{CaiLu2026} with
\cite[Lemma~2.1, (2.1)]{CaiLu2026} gives
\eqref{eq:semistrong-rate}.
\end{proof}
\begin{lemma}[Cai--Lu strong-convergence criterion]
\label{lem:CL-strong}
Suppose $N_c>0$, and let
$F\in\mathcal B_1^+([0,\infty))$ have the same mass and energy as
$F_{\mathrm{be}}$.  Then
\begin{equation}\label{eq:CL-strong-criterion}
 \abs{F(\{0\})-N_c}
 \leq\norm{F-F_{\mathrm{be}}}_1
 \leq
 2\abs{F(\{0\})-N_c}
 +C\bigl(\norm{F-F_{\mathrm{be}}}_1^\circ\bigr)^{1/3},
\end{equation}
where $C$ depends only on the common mass and energy.
\end{lemma}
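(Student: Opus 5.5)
Write $G=F-F_{\rm be}$. Since $F$ and $F_{\rm be}$ have the same mass, $\int dG=0$. The norms split as $\|G\|_1=|G|([0,\infty))+\|G\|_1^\circ$, and the atom at zero contributes $|G(\{0\})|=|F(\{0\})-N_c|$ to the total variation and nothing to the seminorm. The lower bound is then immediate: $|F(\{0\})-N_c|=|G|(\{0\})\le |G|([0,\infty))\le\|G\|_1$.

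For the upper bound, we control the total variation on $(0,\infty)$, i.e.\ $|G|((0,\infty))$, where $G=F^\circ-g_{\rm be}\sqrt x\,dx$. Fix a cutoff $\varepsilon\in(0,1]$ and split $(0,\infty)=(0,\varepsilon]\cup(\varepsilon,\infty)$.

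\emph{Region $(\varepsilon,\infty)$.} Here $|G|((\varepsilon,\infty))\le \varepsilon^{-1}\int x\,d|G|\le\varepsilon^{-1}\|G\|_1^\circ$.

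\emph{Region $(0,\varepsilon]$.} Bound $|G|((0,\varepsilon])\le F^\circ((0,\varepsilon])+F_{\rm be}^\circ((0,\varepsilon])$. Since $A=1$, $g_{\rm be}(x)\sim\kappa/x$ near zero, so $F_{\rm be}^\circ((0,\varepsilon])\lesssim\int_0^\varepsilon x^{-1/2}dx\lesssim\sqrt\varepsilon$. For $F^\circ$, use mass balance. We have $F^\circ((0,\infty))=N-F(\{0\})$ and $F_{\rm be}^\circ((0,\infty))=N-N_c$, so
\[
F^\circ((0,\varepsilon])=(N-F(\{0\}))-F^\circ((\varepsilon,\infty)).
\]
Also $F^\circ((\varepsilon,\infty))\ge F_{\rm be}((\varepsilon,\infty))-\varepsilon^{-1}\|G\|_1^\circ$. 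Combining,
\[
F^\circ((0,\varepsilon])\le N_c-F(\{0\})+F_{\rm be}^\circ((0,\varepsilon])+\varepsilon^{-1}\|G\|_1^\circ\le |F(\{0\})-N_c|+C\sqrt\varepsilon+\varepsilon^{-1}\|G\|_1^\circ .
\]
The constant $C$ depends only on $\kappa$, hence only on $N$ and $E$.

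\emph{Assembling.} Summing the atom term and the two regions gives
\[
\|G\|_1\le 2|F(\{0\})-N_c|+C\sqrt\varepsilon+3\varepsilon^{-1}\|G\|_1^\circ .
\]
Optimizing $\sqrt\varepsilon$ against $\varepsilon^{-1}\|G\|_1^\circ$ gives $\varepsilon=(\|G\|_1^\circ)^{2/3}$. This yields the exponent $1/3$, provided $\|G\|_1^\circ\le1$.

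\emph{Large seminorm.} If $\|G\|_1^\circ>1$, use the trivial bound $\|G\|_1\le 2(N+E)\le 2(N+E)(\|G\|_1^\circ)^{1/3}$, which has the required form with $C$ depending only on $N$ and $E$.

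\emph{Main obstacle.} The delicate step is the region $(0,\varepsilon]$. The seminorm does not see mass near zero, so the small-energy part of $F^\circ$ has to be recovered from mass conservation together with the defect of the atom. This is exactly why $|F(\{0\})-N_c|$ appears with coefficient $2$ in the bound.
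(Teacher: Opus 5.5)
Your proof is correct. It takes a different route only in the sense that the paper gives no argument of its own. The paper notes that $N_c>0$ is equivalent to $\overline T/\overline T_c<1$, so that one is in the regime $A=1$, and then cites \cite[Lemma~2.1, (2.2)]{CaiLu2026}.

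You instead prove the estimate directly:
\begin{itemize}
\item The lower bound follows from $|G|(\{0\})=|F(\{0\})-N_c|$.
\item For the upper bound you split at a level $\varepsilon\in(0,1]$ and control $|G|((\varepsilon,\infty))$ by $\varepsilon^{-1}\|G\|_1^\circ$.
\item You recover $F((0,\varepsilon])$ from mass conservation, which is exactly where the second copy of $|F(\{0\})-N_c|$ comes from.
\item You use $g_{\rm be}(x)\le\kappa/x$, valid because $A=1$ and $e^u-1\ge u$, to get $F_{\rm be}((0,\varepsilon])\le2\kappa\sqrt\varepsilon$.
\item Balancing $\sqrt\varepsilon$ against $\varepsilon^{-1}\|G\|_1^\circ$ gives the exponent $1/3$.
\end{itemize}

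The citation buys brevity. Your version gives an explicit constant, for instance $C=\max\{4\kappa+3,\,2(N+E)\}$, where $\kappa$ is fixed by $E=\Gamma(5/2)\zeta(5/2)\kappa^{5/2}$ in the paper's normalization. It also makes clear that $N_c>0$ enters only through the $\kappa/x$ bound on $g_{\rm be}$.

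There is one small loose end. In the degenerate case $\|G\|_1^\circ=0$, your choice $\varepsilon=(\|G\|_1^\circ)^{2/3}$ is not admissible. You can let $\varepsilon\downarrow0$ in the bound before optimizing. Alternatively, note that $|G|((0,\infty))=0$ in that case, so equal masses force $G=0$.
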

\begin{proof}
By \eqref{eq:Nc}, $N_c>0$ is equivalent to
$\overline T/\overline T_c<1$.  The claim follows from
\cite[Lemma~2.1, (2.2)]{CaiLu2026}.
\end{proof}
\section{Estimates for the collision operator}
\label{sec:kernel-estimates}
Throughout this section we assume \cref{ass:CL}.
\subsection{Positivity of the integral of
\texorpdfstring{\(\mathcal K[\varphi]\)}{K[phi]}
for convex test functions}
For $0\leq x<y\leq z$, define
\begin{equation}\label{eq:Delta-sym}
 \Delta_{\mathrm{sym}}\varphi(x,y,z)
 =\varphi(z+y-x)+\varphi(z+x-y)-2\varphi(z),
\end{equation}
and let $\chi_{y,z}=2$ for $y<z$ and $\chi_{y,z}=1$ for $y=z$.
For every finite nonnegative Borel measure $F$ and every Borel interval
$I\subset[0,\infty)$,
\begin{equation}\label{eq:ordered-square}
 \int_{\{(y,z)\in I^2:\,y\leq z\}}
 \chi_{y,z}\,dF(y)\,dF(z)=F(I)^2.
\end{equation}
Indeed, symmetry of $F\otimes F$ gives
\[
 F(I)^2
 =2(F\otimes F)\bigl(\{(y,z)\in I^2:y<z\}\bigr)
 +(F\otimes F)\bigl(\{(y,z)\in I^2:y=z\}\bigr).
\]
\begin{lemma}[Cai--Lu convex positivity]
\label{lem:convex-positivity}
Under \cref{ass:CL}, let
$F\in\mathcal B_1^+([0,\infty))$.  If
$\varphi\in C_b^{1,1}([0,\infty))$ is convex, then
\begin{equation}\label{eq:convex-positivity}
 \int\mathcal K[\varphi]\,dF^{\otimes3}
 \geq
 \int_{\{0\leq x<y\leq z\}}
 \chi_{y,z}W\Delta_{\mathrm{sym}}\varphi\,dF^{\otimes3}
 \geq0.
\end{equation}
\end{lemma}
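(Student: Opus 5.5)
The plan is to symmetrize the cubic form over the ordering of the three variables. The goal is to show that, after grouping, the integrand reduces to a nonnegative expression supported on $\{x<y\le z\}$, which is exactly $\chi_{y,z}W\Delta_{\mathrm{sym}}\varphi$, plus remainders that are nonnegative by convexity.

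\textbf{Step 1: decompose the integral.} Write
\[
\int\mathcal K[\varphi]\,dF^{\otimes3}=\iiint W(x,y,z)\bigl[\varphi(x)+\varphi(x_*)-\varphi(y)-\varphi(z)\bigr]\,dF(x)\,dF(y)\,dF(z).
\]
By \eqref{eq:W-last-two-symmetry} the integrand is symmetric in $(y,z)$. We may therefore restrict to $y\le z$ at the cost of the factor $\chi_{y,z}$, exactly as in \eqref{eq:ordered-square}.

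\textbf{Step 2: split the region $\{y\le z\}$ by the position of $x$.} We distinguish $x<y$, $y\le x\le z$, and $x>z$; on the last region $x_*$ may be truncated.

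The key step is the standard Cai--Lu/Lu rewriting via the permutation symmetries of $W$ in all three variables. For $x_*xyz>0$ one has
\[
\sqrt{xyz}\,W(x,y,z)=\sqrt{x_*yz}\,W(x_*,y,z)\cdot(\text{swap}).
\]
More precisely, the function $(x,y,z)\mapsto\sqrt{xyz}\,W$ is, up to the measure $\sqrt x\,dx$, invariant under the collision exchanges $(x,x_*)\leftrightarrow(y,z)$. That invariance is what holds for the quadratic form, though, not the cubic one, so I would instead rely directly on the known representation from \cite{CaiLu2019,CaiLu2026}. That representation writes $\iiint\mathcal K[\varphi]\,dF^{\otimes3}$ as an integral over $\{x<y\le z\}$ of $\chi_{y,z}W(x,y,z)\Delta_{\mathrm{sym}}\varphi$, plus terms of the form $\chi\,W\cdot(\text{convex second difference})$ over the complementary ordered regions.

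The concrete route is as follows. For $y\le x\le z$, pair the point $(x,y,z)$ with $(y,x,z)$. This uses the symmetry $W(x,y,z)=W(y,x,z)$, which holds when $x_*>0$, because $W_H$ and the $\Phi$-integral are symmetric under relabeling the collision pair. With this pairing, $\Delta\varphi(x,y,z)+\Delta\varphi(y,x,z)=0$ because the $x_*$ values coincide, so this region cancels. The region $x>z\ge y$ contributes $W\cdot[\varphi(x)+\varphi(x_*)-\varphi(y)-\varphi(z)]$ with $x+x_*=y+z$ (or $x_*=0$). In this configuration $\{x_*,x\}$ is more spread than $\{y,z\}$, so convexity makes this nonnegative. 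The region $x<y\le z$ has $\{x,x_*\}$ more spread than $\{y,z\}$ and again contributes a nonnegative term. Combining this region with its transposed image, via $W(x,y,z)=W(y,x,z)$ swapped into the configuration $(y,\cdot,\cdot)$ centered at $z$, produces $\Delta_{\mathrm{sym}}\varphi(x,y,z)=\varphi(z+y-x)+\varphi(z+x-y)-2\varphi(z)$. This quantity is $\ge0$ by convexity, since $z$ is the midpoint of the other two arguments.

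\textbf{Step 3: nonnegativity and boundary cases.} Drop the remaining regions using convexity, which yields the first inequality in \eqref{eq:convex-positivity}. The second inequality is then immediate from $W\ge0$ and $\Delta_{\mathrm{sym}}\varphi\ge0$. The boundary cases with $x_*xyz=0$ are handled using \eqref{eq:W-boundary}, and the truncation $x_*=(\cdot)_+$ contributes $\varphi(0)$ terms that are handled by monotone convexity comparisons. Integrability is guaranteed because $\mathcal K[\varphi]$ is bounded (\cref{lem:collision-forms-well-defined}) and $F$ is finite.

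\textbf{Main obstacle.} I expect the hard part to be the exact bookkeeping of which symmetry of $W$ permits each pairing. In particular, I need to verify $W(x,y,z)=W(y,x,z)$ (or the correct substitute) on the relevant region, and to check that the permuted terms reassemble precisely into $\Delta_{\mathrm{sym}}$ with weight $\chi_{y,z}$. Here I would follow \cite[Lemma~6.3]{CaiLu2019} and the corresponding lemma in \cite{CaiLu2026}, which state this symmetrized identity for general $\Phi$ under \cref{ass:CL}.
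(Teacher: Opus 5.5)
The paper does not reprove this lemma; it simply quotes \cite[Lemma~3.1(II), (3.4)]{CaiLu2026}. Your closing fallback of citing Cai--Lu is therefore consistent with the paper. However, the self-contained mechanism you sketch in Step~2 would fail.

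First, $W(x,y,z)=W(y,x,z)$ is not a symmetry of the reduced kernel, not even for hard spheres. By \eqref{eq:WH}, $\sqrt{xyz}\,W_H(x,y,z)=\min\{\sqrt x,\sqrt y,\sqrt z,\sqrt{y+z-x}\}$, whereas $\sqrt{xyz}\,W_H(y,x,z)=\min\{\sqrt x,\sqrt y,\sqrt z,\sqrt{x+z-y}\}$. At $(x,y,z)=(5,3,3)$ these equal $1$ and $\sqrt3$. What the paper takes from \cite[Lemma~6.3]{CaiLu2019} is only the $y\leftrightarrow z$ symmetry \eqref{eq:W-last-two-symmetry}.

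Second, $\Delta\varphi(x,y,z)+\Delta\varphi(y,x,z)$ does not vanish. The two values of $x_*$ are $y+z-x$ and $x+z-y$, not the same. The sum is $\varphi(y+z-x)+\varphi(x+z-y)-2\varphi(z)$, which is exactly $\Delta_{\mathrm{sym}}\varphi$ when $x<y\le z$. So the pair you declare to cancel on $\{y\le x\le z\}$ is the very pair you later combine to produce $\Delta_{\mathrm{sym}}\varphi$, and your bookkeeping contradicts itself. You also never use the monotonicity of $\Phi$ in \cref{ass:CL}, which is what the argument actually needs.

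Here is the correct accounting. Sort the variables so that $x<y\le z$, with weight $\chi_{y,z}$; tied configurations with $x=y$ give $\Delta\varphi=0$ or a nonnegative term. There are three orientations, according to which variable sits in the first slot:
\begin{enumerate}
\item Minimum first: here $D_1=\varphi(x)+\varphi(y+z-x)-\varphi(y)-\varphi(z)\ge0$.
\item Maximum first: this term is $\ge0$, by convexity when $x+y>z$ and because $W\Delta\varphi=0$ otherwise, so it can be dropped.
\item Middle first: here $D_2=\varphi(y)+\varphi(x+z-y)-\varphi(x)-\varphi(z)\le0$.
\end{enumerate}
The middle-first term is genuinely negative and must be absorbed. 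Since $D_1+D_2=\Delta_{\mathrm{sym}}\varphi(x,y,z)$,
\[
 W(x,y,z)D_1+W(y,x,z)D_2
 =W(x,y,z)\Delta_{\mathrm{sym}}\varphi(x,y,z)
 +\bigl(W(x,y,z)-W(y,x,z)\bigr)(-D_2).
\]
The missing ingredient is therefore the one-sided comparison $W(y,x,z)\le W(x,y,z)$ for $0\le x\le y\le z$.

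This comparison follows from the monotonicity of $\Phi$. Both kernels in \eqref{eq:W} have the same $\sigma$-range $[\sqrt y-\sqrt x,\sqrt y+\sqrt x]$ and the same prefactor. Set $a=(x-y+\sigma^2)/(2\sigma)$, $p^2=x-a^2$ and $q^2=z-a^2$. Then $Y_*=|q+e^{\mathrm i\theta}p|$ for $W(x,y,z)$, while for $W(y,x,z)$ one gets $Y_*=|q'+e^{\mathrm i\theta}p|$ with $q'^2=q^2-(y-x)\in[p^2,q^2]$. Since $q\mapsto|q+e^{\mathrm i\theta}p|$ is nondecreasing for $q\ge p$, the integrand for $W(y,x,z)$ is pointwise smaller. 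At $x=0$ the same comparison reads $\Phi(\sqrt{2y},\sqrt{2(z-y)})\le\Phi(\sqrt{2y},\sqrt{2z})$ in \eqref{eq:W-boundary}.

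For hard spheres this comparison is an equality. For general $\Phi$ only the inequality is available, so the first relation in \eqref{eq:convex-positivity} is a genuine inequality. It comes from dropping the maximum-first term and the extra nonnegative term above, not from the ``symmetrized identity'' you describe.
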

\begin{proof}
The conclusion is
\cite[Lemma~3.1(II), (3.4)]{CaiLu2026}.
\end{proof}
\subsection{Lower bound for
\texorpdfstring{\(\mathcal J[\varphi]\)}{J[phi]}
and persistence estimates}
For convex test functions, the cubic contribution is nonnegative by
\cref{lem:convex-positivity}, but the quadratic form $\mathcal J$ may be
negative.  The following estimate bounds its negative part.
\begin{lemma}[Cai--Lu quadratic lower bound]\label{lem:quadratic-lower}
Assume \cref{ass:CL}.  Let
$\varphi\in C_b^{1,1}([0,\infty))$ be nonnegative and convex.  Then
\begin{equation}\label{eq:J-pointwise-lower}
 \mathcal J[\varphi](y,z)
 \geq-\frac12(\varphi(y)\sqrt z+\varphi(z)\sqrt y),
 \qquad y,z\geq0.
\end{equation}
Consequently, if $F\in\mathcal B_1^+([0,\infty))$ has mass $N$ and energy
$E$, then
\begin{equation}\label{eq:J-integrated-lower}
 \iint\mathcal J[\varphi]\,dF^{\otimes2}
 \geq-\sqrt{NE}\int\varphi\,dF.
\end{equation}
\end{lemma}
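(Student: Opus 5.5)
The plan has two parts. The pointwise bound \eqref{eq:J-pointwise-lower} is a kernel computation. The integrated bound \eqref{eq:J-integrated-lower} then follows from symmetry and the half-moment estimate \eqref{eq:extension-half-moment}.

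\emph{Integrated bound, assuming \eqref{eq:J-pointwise-lower}.} Since $\varphi\geq0$, the right-hand side of \eqref{eq:J-pointwise-lower} is integrable against $F\otimes F$. Integrating it gives
\[
 \iint\mathcal J[\varphi]\,dF^{\otimes2}
 \geq-\frac12\iint\bigl(\varphi(y)\sqrt z+\varphi(z)\sqrt y\bigr)\,dF(y)\,dF(z)
 =-\int\varphi\,dF\int\sqrt z\,dF(z).
\]
The Cauchy--Schwarz bound $\int\sqrt z\,dF\leq\sqrt{NE}$, which is \eqref{eq:extension-half-moment} applied to $F$, then yields \eqref{eq:J-integrated-lower}. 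This step is routine.

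\emph{Pointwise bound.} I would first try to quote it directly: it should be the quadratic part of \cite[Lemma~3.1]{CaiLu2026}, the companion of the cubic positivity in \cref{lem:convex-positivity}, and its hypotheses (convexity, nonnegativity, $C_b^{1,1}$ regularity, and $0\leq\Phi\leq1$) are implied by \cref{ass:CL}. If a self-contained argument is wanted, I would proceed as follows. By the symmetry \eqref{eq:W-last-two-symmetry}, it suffices to take $0<y\leq z$; the degenerate cases $y=0$ or $z=0$ are handled by the explicit boundary values \eqref{eq:W-boundary}. Split $[0,y+z]$ into three pieces:
\[
 [0,y),\qquad [y,z],\qquad (z,y+z].
\]
On the two outer pieces, $x$ and $x_*=y+z-x$ lie outside $[y,z]$ and have the same sum $y+z$. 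Convexity (majorization) then gives $\varphi(x)+\varphi(x_*)\geq\varphi(y)+\varphi(z)$, so $\Delta\varphi\geq0$ there. Quantitatively, tangent lines at $y$ and $z$ give
\[
 \Delta\varphi\geq\bigl(\varphi'(z)-\varphi'(y)\bigr)\,\mathrm{dist}\bigl(x,[y,z]\bigr).
\]
On the middle piece, $\Delta\varphi\geq-\varphi(y)-\varphi(z)$ because $\varphi\geq0$. There \eqref{eq:WH} gives $W\sqrt x\leq W_H\sqrt x=1/\sqrt z$. The negative contribution to $\mathcal J$ is therefore at most
\[
 \frac12\,\frac{z-y}{\sqrt z}\,\bigl(\varphi(y)+\varphi(z)\bigr).
\]

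\emph{Main obstacle.} The crude middle-piece bound already gives $\tfrac{z-y}{\sqrt z}\,\varphi(y)\leq\sqrt z\,\varphi(y)$, which is the required weight on $\varphi(y)$. The weight on $\varphi(z)$ is the problem: the bound gives $\tfrac{z-y}{\sqrt z}$ where $\sqrt y$ is needed, and this fails when $y\ll z$. The missing decay must come from convexity, not from the sign $\varphi\geq0$ alone. I would first try to lower-bound $\varphi(x)+\varphi(x_*)$ on $[y,z]$ by $\max\{2\varphi(z)+\varphi'(z)(y-z),\,0\}$. The positive outer contribution, of order $(\varphi'(z)-\varphi'(y))$ times $\int_0^y\sqrt x\,dx/\sqrt{yz}\sim y/\sqrt z$, should then absorb the part of $\varphi(z)$ not controlled by $\sqrt y$. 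If this balancing becomes unwieldy, I would fall back on citing \cite[Lemma~3.1]{CaiLu2026} for \eqref{eq:J-pointwise-lower}, as in the statement's attribution.
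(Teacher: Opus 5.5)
Your derivation of \eqref{eq:J-integrated-lower} from \eqref{eq:J-pointwise-lower} is the same as the paper's; the paper also invokes \cref{lem:collision-forms-well-defined} for the integrability of $\mathcal J[\varphi]$ itself. For the pointwise bound, the paper only cites \cite[Lemma~5.2, (5.13)]{CaiLu2019}; your attribution to \cite[Lemma~3.1]{CaiLu2026} is a guess and is not the source the paper uses.

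Your self-contained argument has a genuine gap: you never use that $\varphi$ is bounded. A convex function bounded on $[0,\infty)$ must be nonincreasing, because $\varphi(b)>\varphi(a)$ for some $a<b$ would force linear growth beyond $b$. Without this, \eqref{eq:J-pointwise-lower} is false, so no balancing of outer gain against the $\varphi(z)$ loss can close it. Take $\Phi\equiv1$, which \cref{ass:CL} allows, and a smoothing of $\varphi(x)=(x-c)_+$ with $0<y\le c\le z$. This function is excluded from the lemma only because it is unbounded. A direct computation gives
\[
 \mathcal J[\varphi](y,z)=\frac{4}{15}\frac{y^2}{\sqrt z}-\frac{(z-c)(c-y)}{2\sqrt z}.
\]
With $c=z/2$ and $y\to0$, this tends to $-z^{3/2}/8$, while the claimed lower bound $-\tfrac14 z\sqrt y$ tends to $0$. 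The outer gain is only of order $y^2/\sqrt z$, far too small to absorb the middle loss. So your "main obstacle" is real for the hypotheses you actually used, and your proposed absorption mechanism cannot remove it.

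Once monotonicity is used, the obstacle disappears. Let $0<y\le z$ and $x\in[y,z]$. Then both $x$ and $x_*=y+z-x$ lie in $[y,z]$, so $\varphi(x)+\varphi(x_*)\ge2\varphi(z)$. Hence $\Delta\varphi\ge\varphi(z)-\varphi(y)\ge-\varphi(y)$. Your own tangent-line bound $2\varphi(z)+\varphi'(z)(y-z)$ already gives this once you know $\varphi'(z)\le0$. On $[y,z]$ you also have $W\sqrt x\le W_H\sqrt x=1/\sqrt z$, and on the outer pieces $\Delta\varphi\ge0$ by your majorization argument. Together these give
\[
 \mathcal J[\varphi](y,z)\ge-\frac{z-y}{2\sqrt z}\,\varphi(y)\ge-\frac12\,\varphi(y)\sqrt z ,
\]
so no $\varphi(z)\sqrt y$ term is needed when $y\le z$. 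The case $y=0<z$ is identical, since $W(x,0,z)\sqrt x\le1/\sqrt z$ on $(0,z]$ and $x,z-x\in[0,z]$. The case $z<y$ follows by the symmetry \eqref{eq:W-last-two-symmetry}.
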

\begin{proof}
The pointwise estimate \eqref{eq:J-pointwise-lower} is
\cite[Lemma~5.2, (5.13)]{CaiLu2019}.  Its integrability follows from
\cref{lem:collision-forms-well-defined}.  Integrating the pointwise estimate
and using \eqref{eq:extension-half-moment}, we obtain
\begin{align*}
 \iint\mathcal J[\varphi]\,dF^{\otimes2}
 &\geq-\frac12\iint
 \bigl(\varphi(y)\sqrt z+\varphi(z)\sqrt y\bigr)
 \,dF(y)\,dF(z)\\
 &=-\left(\int\varphi\,dF\right)
 \left(\int\sqrt x\,dF(x)\right)
 \geq-\sqrt{NE}\int\varphi\,dF.
\end{align*}
\end{proof}
\begin{lemma}[Persistence of cutoff moments and the condensate]
\label{lem:persistence}
Assume \cref{ass:CL}.  Let $F_t$ be a conservative isotropic measure
solution with mass $N>0$, energy $E>0$, and $c_0=\sqrt{NE}$.  For every
bounded, nonnegative, convex
$\varphi\in C_b^{1,1}([0,\infty))$,
\begin{equation}\label{eq:persistence}
 \int_{[0,\infty)}\varphi(x)\,dF_t(x)
 \geq e^{-c_0(t-s)}
 \int_{[0,\infty)}\varphi(x)\,dF_s(x),
 \qquad 0\leq s\leq t.
\end{equation}
In particular,
\begin{equation}\label{eq:cutoff-persistence}
 N_{0,2}(F_t,\varepsilon)
 \geq e^{-c_0(t-s)}N_{0,2}(F_s,\varepsilon),
 \qquad \varepsilon>0,
\end{equation}
and
\begin{equation}\label{eq:atom-persistence}
 F_t(\{0\})
 \geq e^{-c_0(t-s)}F_s(\{0\}).
\end{equation}
\end{lemma}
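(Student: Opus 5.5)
The plan is to test the weak formulation of \cref{lem:test-extension} against the convex function $\varphi$ and bound the two collision contributions separately, obtaining a linear differential inequality for $u(t):=\int\varphi\,dF_t$. Since $\varphi\in C_b^{1,1}([0,\infty))$, \cref{lem:test-extension} says $u\in C^1([0,\infty))$ and
\[
 u'(t)=\iint\mathcal J[\varphi]\,dF_t^{\otimes2}+\iiint\mathcal K[\varphi]\,dF_t^{\otimes3}.
\]
The cubic term is nonnegative by \cref{lem:convex-positivity}, because $\varphi$ is convex and $F_t\in\mathcal B_1^+$. The quadratic term is at least $-\sqrt{NE}\int\varphi\,dF_t=-c_0u(t)$ by \eqref{eq:J-integrated-lower} of \cref{lem:quadratic-lower}, using that $\varphi\geq0$ and that $F_t$ has mass $N$ and energy $E$. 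Together these give $u'(t)\geq-c_0u(t)$ for every $t\geq0$.

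Next we integrate. The function $e^{c_0t}u(t)$ is $C^1$ with nonnegative derivative, so it is nondecreasing, and this is exactly \eqref{eq:persistence}. Taking $\varphi=\varphi_\varepsilon$ gives \eqref{eq:cutoff-persistence}. This is allowed because $\varphi_\varepsilon$ is nonnegative, convex, bounded and in $C_b^{1,1}$, as recorded after \eqref{eq:cutoff-mass}.

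For the atom bound \eqref{eq:atom-persistence}, the indicator $\mathbf 1_{\{0\}}$ is not an admissible test function, so we pass to a limit. Apply \eqref{eq:cutoff-persistence} with fixed $s\leq t$ and let $\varepsilon\downarrow0$. By \eqref{eq:cutoff-limit}, both sides converge monotonically: $N_{0,2}(F_t,\varepsilon)\to F_t(\{0\})$ and $N_{0,2}(F_s,\varepsilon)\to F_s(\{0\})$. The inequality passes to the limit and yields \eqref{eq:atom-persistence}.

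There is no real obstacle here, since the substantive inputs (convex positivity and the pointwise bound on $\mathcal J$) are already taken from Cai--Lu. The only points that need care are that the differential inequality holds at every $t$, which the $C^1$ regularity from \cref{lem:test-extension} guarantees, and that the atom is reached through the monotone limit rather than by testing directly with an indicator.
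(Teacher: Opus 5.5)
Your proof is correct and is essentially the paper's argument. The paper simply cites the integrating-factor estimate of \cite[Lemma~3.2]{CaiLu2026}, which is exactly the combination you carry out: nonnegativity of the cubic term from \cref{lem:convex-positivity}, the bound $\iint\mathcal J[\varphi]\,dF_t^{\otimes2}\geq -c_0\int\varphi\,dF_t$ from \cref{lem:quadratic-lower}, integration of $u'\geq -c_0u$, specialization to $\varphi_\varepsilon$, and the limit $\varepsilon\downarrow0$ via \eqref{eq:cutoff-limit}. Your reconstruction from lemmas already recorded in the paper is self-contained, and it is not circular, since neither of those lemmas relies on persistence.
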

\begin{proof}
The integrating-factor estimate in \cite[Lemma~3.2]{CaiLu2026} gives
\eqref{eq:persistence}.  Taking $p=2$ in its cutoff-moment conclusion gives
\eqref{eq:cutoff-persistence}, and its conclusion for $F_t(\{0\})$ gives
\eqref{eq:atom-persistence}.
\end{proof}
\subsection{Uniform lower bound of
\texorpdfstring{\(W\)}{W}
for low-energy collisions}
\begin{lemma}[Low-energy reduced-kernel bound]
\label{lem:W-lower}
Under \cref{ass:CL}, for
$0\leq x<y\leq z\leq\varepsilon\leq1/16$,
\begin{equation}\label{eq:W-lower}
 W(x,y,z)\geq\frac{b_0}{2}\varepsilon^{\eta-1}.
\end{equation}
\end{lemma}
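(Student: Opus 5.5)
The plan is to bound $W$ from below by $\Phi$ evaluated at a momentum transfer of size $\sqrt z$, using the monotonicity and the lower bound of $\Phi$ from \cref{ass:CL}, and then compare the remaining integral with the hard-sphere kernel $W_H$. I split into the boundary case $x=0$ and the interior case $x>0$.

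\emph{Boundary case $x=0$.} Here $0<y\le z$, so \eqref{eq:W-boundary} gives
\[
 W(0,y,z)=\frac{\Phi(\sqrt{2y},\sqrt{2z})}{\sqrt{yz}} .
\]
Since $2y+2z\le 4\varepsilon\le 1$, \eqref{eq:Phi-power} gives $\Phi\ge b_0(2y+2z)^\eta\ge b_0 z^\eta$. Together with $\sqrt{yz}\le z$ this yields $W\ge b_0 z^{\eta-1}\ge b_0\varepsilon^{\eta-1}$, because $\eta-1<0$ and $z\le\varepsilon$.

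\emph{Interior case $x>0$.} Then $x_*=y+z-x>z$, so $x_*xyz>0$ and \eqref{eq:W} applies. Since $x$ is the smallest of $x,x_*,y,z$, formula \eqref{eq:WH} gives $W_H=\sqrt x/\sqrt{xyz}=1/\sqrt{yz}$, and this equals $\frac{1}{4\pi\sqrt{xyz}}\int_{\sigma_-}^{\sigma_+}\int_0^{2\pi}d\theta\,d\sigma$. So it suffices to show that $\Phi(\sqrt2\sigma,\sqrt2 Y_*)\gtrsim b_0 z^\eta$ on a definite fraction of the $(\sigma,\theta)$-region; then $W\gtrsim b_0 z^{\eta}/\sqrt{yz}\ge b_0 z^{\eta-1}\ge b_0\varepsilon^{\eta-1}$.

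The first ingredient is an upper bound on the arguments. All four energies are at most $x_*\le 2\varepsilon$, and $\sigma\le\sigma_+\le\sqrt x+\sqrt y$, $Y_*\le\sqrt x+\sqrt z$. Hence $2\sigma^2+2Y_*^2\le 16\varepsilon\le 1$, using $\varepsilon\le 1/16$. Therefore the minimum in \eqref{eq:Phi-power} is the power, and
\[
 \Phi(\sqrt2\sigma,\sqrt2Y_*)\ge b_0\,(2\sigma^2+2Y_*^2)^\eta .
\]

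The second ingredient is a lower bound for $2\sigma^2+2Y_*^2=|v-v'|^2+|v-v_*'|^2$. The vectors $v-v'$ and $v-v_*'$ are the components of $v-v_*$ parallel and orthogonal to $\omega$. So this quantity equals
\[
 |v-v_*|^2\ge 2(\sqrt{x_*}-\sqrt x)^2\ge 2(\sqrt z-\sqrt y)^2 .
\]
On the upper half $\sigma\in[(\sigma_-+\sigma_+)/2,\sigma_+]$ of the $\sigma$-range one also has $\sigma\ge\sigma_+/2\ge\sqrt y/2$, hence $2\sigma^2\ge y/2$. Now separate two subcases. If $\sqrt y\ge\sqrt z/2$, then $y/2\ge z/8$. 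Otherwise $\sqrt z-\sqrt y\ge\sqrt z/2$, so $2(\sqrt z-\sqrt y)^2\ge z/2$. In either subcase $2\sigma^2+2Y_*^2\ge z/8$ on the upper half. That half carries at least half of the hard-sphere integral, so
\[
 W\ge \tfrac12 b_0 (z/8)^\eta\,\frac{1}{\sqrt{yz}}\ge \tfrac12\,8^{-\eta} b_0\,\varepsilon^{\eta-1}.
\]

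The main obstacle is the sharp constant $b_0/2$: the crude splitting above loses a factor $8^{-\eta}$. To recover it, I would first use the sharper identity $2\sigma^2+2Y_*^2=|v-v_*|^2$ on the whole $(\sigma,\theta)$-range, integrated in $\theta$. Then I would exploit that $Y_*^2$ averages in $\theta$ to roughly $z+x-2(\cdot)^2$, which is of order $z$. Alternatively, I would use the slack between $\sqrt{yz}$ and $z$ when $y$ is comparable to $z$, and otherwise use $\sqrt{yz}\le\sqrt{y\varepsilon}$. If neither closes the gap exactly, the argument still gives \eqref{eq:W-lower} with a constant $c_\eta b_0$ in place of $b_0/2$, which is all the later multiscale argument needs, since $\gamma_k$ enters only through its order $b_0\varepsilon_k^{\eta-1}$.
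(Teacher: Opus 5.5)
Your route differs from the paper's, which simply quotes the Cai--Lu estimate $W(x,y,z)\geq\frac{2b_0}{3}(1\wedge z)^\eta/\sqrt{yz}$ for $0\le x<y\le z$ \cite[Lemma~3.1(II)]{CaiLu2026} and then uses $\sqrt{yz}\le z\le\varepsilon$. Every step you actually carry out is correct. However, for $\eta>0$ you do not prove the lemma as stated: the interior case only gives $W\geq 2^{-1-3\eta}b_0\varepsilon^{\eta-1}$, which is strictly below $\frac{b_0}{2}\varepsilon^{\eta-1}$. You list remedies but do not carry them out, and the $\theta$-averaging one would not work as described. The lower bound for $\Phi$ is the concave function $T^\eta$ of $T=2\sigma^2+2Y_*^2$, so a lower bound on a $\theta$-average of $T$ gives no lower bound on the average of $T^\eta$; Jensen goes the wrong way. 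Falling back on ``$c_\eta b_0$ is enough'' would mean changing the explicit constants in \cref{prop:generator} and \eqref{eq:gamma-Q}. That is harmless downstream, but it is a modification of the paper, not a proof of \eqref{eq:W-lower}.

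The constant is lost in two crude steps, and your own framework recovers it. First, for $0<x<y\le z$ one has $yz-xx_*=(y-x)(z-x)\ge0$, i.e. $\sqrt x+\sqrt{x_*}\le\sqrt y+\sqrt z$. Hence $\sigma_\pm=\sqrt y\pm\sqrt x$ exactly, and the upper half of the $\sigma$-range is $[\sqrt y,\sqrt y+\sqrt x]$, where $T\geq 2\sigma^2\geq 2y$ rather than merely $y/2$. Second, instead of thresholding at $z/8$, use $T\le1$ together with $T^\eta\geq z^\eta\min\{1,T/z\}$, which is valid since $0\le\eta<1$. Combine this with $T\geq 2(\sqrt z-\sqrt y)^2$, which holds on the whole rectangle. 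Put $s=\sqrt{y/z}\in(0,1]$. The normalized average of $\min\{1,T/z\}$ over $[\sigma_-,\sigma_+]\times[0,2\pi]$ is then at least $\frac12\bigl[\min\{1,2s^2\}+\min\{1,2(1-s)^2\}\bigr]$. This bracket is always at least $1$: either one of the minima equals $1$, or the bracket equals $(2s-1)^2+1$. Since $\frac{1}{4\pi\sqrt{xyz}}$ times the area of the rectangle is exactly $W_H=1/\sqrt{yz}$, you get $W\geq\frac{b_0}{2}z^\eta/\sqrt{yz}\geq\frac{b_0}{2}z^{\eta-1}\geq\frac{b_0}{2}\varepsilon^{\eta-1}$. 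This is precisely \eqref{eq:W-lower}, obtained by a self-contained computation instead of the citation. Your boundary case $x=0$ already gives the stronger constant $b_0$.
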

\begin{proof}
By \cite[Lemma~3.1(II), (3.12)]{CaiLu2026},
\[
 W(x,y,z)\geq\frac{2b_0}{3}
 \frac{(1\wedge z)^\eta}{\sqrt{yz}}
 =\frac{2b_0}{3}\frac{z^\eta}{\sqrt{yz}}.
\]
Since $\sqrt{yz}\leq z\leq\varepsilon$ and $\eta<1$,
\[
 \frac{z^\eta}{\sqrt{yz}}
 \geq z^{\eta-1}
 \geq\varepsilon^{\eta-1}.
\]
Because $2/3\geq1/2$, this proves \eqref{eq:W-lower}.
\end{proof}
\section{One-scale estimate for low-energy mass}\label{sec:one-scale}
To prove condensation, we need to understand the behavior of the solution
at arbitrarily small energies.  In \Cref{sec:multiscale}, we
introduce the dyadic low-energy scales
$\varepsilon_k=2^{-k-4}$ with the corresponding shells
$S_k=(\varepsilon_{k+1},\varepsilon_k]$, and study the transfer of mass
from these shells toward lower energies.  Since this requires us to treat
infinitely many low-energy scales simultaneously, we first establish the
basic estimate at one fixed scale.
Fix a sufficiently small energy level $\varepsilon\in (0,1/16]$, choose $s>0$ and $m\geq4$, and set $L=ms\leq\varepsilon$. Let $(F_t)_{t\geq0}$ be a conservative isotropic measure solution with mass $N$ and energy $E$.  We use the convex cutoff $\psi_{s,m}$ supported in $[0,L]$ and consider its moment
\[U(t)=\int\psi_{s,m}\,dF_t,\] which records a weighted amount of mass below
$L$.  We also write $A(t)=F_t([L,\varepsilon])$ for the mass in the
higher-energy interval.  By applying the weak formulation to
$\psi_{s,m}$, we obtain a differential inequality that describes how the
mass in $[L,\varepsilon]$ produces and enhances low-energy mass below
$L$.
More precisely, we show that the estimates of the collision operator
in \Cref{sec:kernel-estimates} give
\[
\begin{aligned}
 U'(t)
 &\geq
 \frac{(e^{-1/2}-e^{-4})^2}{12\sqrt2}\,
 b_0\varepsilon^{\eta-1}A(t)^2s^{3/2}
 -\sqrt{NE}\,U(t)\\
 &\quad+
 \frac{b_0}{2(e+1)^2}\,
 \varepsilon^{\eta-1}A(t)^2
 \frac{s}{\varepsilon+s}
 \bigl(U(t)-(e^2-1)^2Ne^{-2m}\bigr)_+ .
\end{aligned}
\]
We briefly introduce the origins of the different terms on the right-hand side of the inequality.
\begin{enumerate}[label=\textup{(\roman*)},leftmargin=*,itemsep=2pt]
\item
The term
\(\dfrac{(e^{-1/2}-e^{-4})^2}{12\sqrt2}
b_0\varepsilon^{\eta-1}A(t)^2s^{3/2}\)
comes from the gain part of the quadratic collision form.  It follows
from \cref{lem:W-lower} and \cref{lem:profile}(i); see
\eqref{eq:quadratic-pointwise}--\eqref{eq:quadratic-integrated}.
\item
The term \(-\sqrt{NE}\,U(t)\) controls the loss part of the quadratic
collision form.  It follows from
\cref{lem:quadratic-lower}; see \eqref{eq:quadratic-remainder} and
\eqref{eq:quadratic-integrated}.
\item
The term
\(\dfrac{b_0}{2(e+1)^2}\varepsilon^{\eta-1}A(t)^2
\dfrac{s}{\varepsilon+s}
\bigl(U(t)-(e^2-1)^2Ne^{-2m}\bigr)_+\)
comes from the cubic collision form and describes the Bose-enhanced
transfer of mass toward lower energies.  It follows from \cref{lem:convex-positivity},
\cref{lem:profile}(ii), and
\eqref{eq:cell-square-lower}; see \eqref{eq:cubic-before-boundary}.
\item
The correction \((e^2-1)^2Ne^{-2m}\) is the mass loss of the cutoff test function in the
boundary region \((L-2s,L]\).  It is controlled by
\cref{lem:profile}(iii); see \eqref{eq:boundary-positive-part}.
\end{enumerate}
Thus the inequality consists of a quadratic source, a linear quadratic
loss, and a cubic feedback term proportional to the cutoff mass.  More
specifically, the quadratic gain term initiates the transfer of mass from
the higher-energy shell $[L,\varepsilon]$ into the lower-energy region
below $L$, while the cubic term amplifies this transfer through Bose
enhancement once low-energy mass is present.
\subsection{Cutoff test function
\texorpdfstring{\(\psi_{s,m}\)}{psi(s,m)}}
For \(s>0\) and \(m\geq4\), define
\[
 \psi_{s,m}(x)=(e^{-x/s}-e^{-m})_+^2,
 \qquad L=ms.
\]
\begin{lemma}[Properties of \(\psi_{s,m}\)]
\label{lem:profile}
The function \(\psi_{s,m}\) is bounded, nonnegative, nonincreasing,
convex, and belongs to \(C_b^{1,1}([0,\infty))\), with \(\operatorname{supp}\psi_{s,m}\subset[0,L]\).  Moreover,
\begin{enumerate}[label=\textup{(\roman*)}]
\item
for \(0\leq x\leq s/2\), \(
 \psi_{s,m}(x)\geq(e^{-1/2}-e^{-4})^2\);
\item if \(0\leq d\leq s\) and \(0\leq x\leq L-2s\), then
\[
 \psi_{s,m}(x+d)\geq\frac{1}{(e+1)^2}\psi_{s,m}(x);
\]
\item for every finite nonnegative Borel measure \(F\) of mass \(N\),
\[
 \int_{(L-2s,\infty)}\psi_{s,m}(x)\,dF(x)
 \leq (e^2-1)^2Ne^{-2m};
\]
\item if \(L\leq\varepsilon'\), for \(x\ge 0\),
\(
 \psi_{s,m}(x)\leq\varphi_{\varepsilon'}(x).
\)
\end{enumerate}
\end{lemma}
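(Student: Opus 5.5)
The plan is to verify each property by elementary calculus on the function $u(x)=e^{-x/s}-e^{-m}$, which is positive exactly on $[0,L)$, where $L=ms$. On $[0,L)$ we have $\psi_{s,m}=u^2$. Differentiating gives
\[
\psi_{s,m}'=-\tfrac{2}{s}e^{-x/s}u\le0,
\qquad
\psi_{s,m}''=\tfrac{2}{s^2}e^{-x/s}\bigl(2e^{-x/s}-e^{-m}\bigr)\ge0 .
\]
At $x=L$ we get $u=0$, so both $\psi_{s,m}$ and $\psi_{s,m}'$ vanish there. Hence $\psi_{s,m}$ is $C^1$, its derivative is piecewise $C^1$ with bounded second derivative, and therefore $\psi_{s,m}'$ is globally Lipschitz with constant at most $4/s^2$. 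This yields membership in $C_b^{1,1}$, boundedness by $1$, monotonicity, convexity (since $\psi_{s,m}'$ is nondecreasing), and $\operatorname{supp}\psi_{s,m}\subset[0,L]$.

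For (i), monotonicity gives $\psi_{s,m}(x)\ge\psi_{s,m}(s/2)=(e^{-1/2}-e^{-m})^2\ge(e^{-1/2}-e^{-4})^2$, using $m\ge4$.

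For (ii), we need $u(x+d)\ge u(x)/(e+1)$ when $x\le L-2s$ and $d\le s$. Since $u$ is decreasing, it suffices to take $d=s$. Write $a=e^{-x/s}\ge e^{2-m}$. The inequality $(e+1)(a/e-e^{-m})\ge a-e^{-m}$ is equivalent to $a/e\ge e\,e^{-m}$, i.e. $a\ge e^{2-m}$, which holds. The positivity of $u(x+s)$ is automatic because $x+s\le L-s$.

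For (iii), on $(L-2s,\infty)$ we have $\psi_{s,m}\le\psi_{s,m}(L-2s)=(e^{2-m}-e^{-m})^2=(e^2-1)^2e^{-2m}$. Integrating against $F$, whose mass is $N$, gives the bound.

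For (iv), which I expect to be the only step needing a little care, it suffices to treat $\varepsilon'=L$, because $\varphi_{\varepsilon'}$ is increasing in $\varepsilon'$. On $[0,L]$, set $t=x/s\in[0,m]$. We must show $e^{-t}-e^{-m}\le1-t/m$. The function $g(t)=1-t/m-e^{-t}+e^{-m}$ is concave, and $g(0)=e^{-m}>0$, $g(m)=0$. A concave function lies above the chord joining its endpoint values, so $g\ge0$ on $[0,m]$. Squaring the nonnegative sides gives $\psi_{s,m}\le\varphi_L$. Outside $[0,L]$, both functions vanish.
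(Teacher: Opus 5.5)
Your proof is correct and follows essentially the same route as the paper: the same derivative computation for the $C_b^{1,1}$, monotonicity and convexity claims, monotonicity for (i) and (iii), and a chord argument for (iv) (your concave $g(t)=1-t/m-e^{-t}+e^{-m}$ is the paper's convexity of $r\mapsto e^{-r}$ in another form). The only cosmetic difference is in (ii), where you reduce to $d=s$ and check $a\geq e^{2-m}$ directly, while the paper bounds the ratio $(a-q)/(1-q)$ using its monotonicity in $a=e^{-d/s}$ and $q=e^{x/s-m}$; both give the same constant $1/(e+1)$.
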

\begin{proof}
Write $\psi=\psi_{s,m}$.  On $0\leq x<L$,
\[
 \psi'(x)
 =-\frac{2}{s}e^{-x/s}(e^{-x/s}-e^{-m}),
 \qquad
 \psi''(x)
 =\frac{2}{s^2}e^{-x/s}(2e^{-x/s}-e^{-m}).
\]
Since $e^{-x/s}>e^{-m}$ on $[0,L)$, we have $\psi'\leq0$ and
$\psi''\geq0$ there.  Moreover, $\psi''\leq4s^{-2}$,
while $\psi(L-)=\psi'(L-)=0$.  Since $\psi$ vanishes on $[L,\infty)$,
its derivative extends continuously by zero across $L$ and is globally
Lipschitz.  Hence $\psi\in C_b^{1,1}([0,\infty))$ is nonnegative,
nonincreasing, and convex, with
$\operatorname{supp}\psi\subset[0,L]$.
For \textup{(i)}, if $0\leq x\leq s/2$, then monotonicity and $m\geq4$
give
\[
 \psi(x)\geq\psi(s/2)
 =(e^{-1/2}-e^{-m})^2
 \geq(e^{-1/2}-e^{-4})^2.
\]
For \textup{(ii)}, let $0\leq d\leq s$ and $0\leq x\leq L-2s$, $x+d<L$.  Set $a=e^{-d/s}$ and $q=e^{x/s-m}$.  Thus
$a\geq e^{-1}$ and $q\leq e^{-2}$.  Since $(a-q)/(1-q)$ is increasing
in $a$ and decreasing in $q$,
\[
 \frac{\sqrt{\psi(x+d)}}{\sqrt{\psi(x)}}
 =\frac{a-q}{1-q}
 \geq\frac{e^{-1}-e^{-2}}{1-e^{-2}}
 =\frac1{e+1}.
\]
Squaring proves \textup{(ii)}.
For \textup{(iii)}, the monotonicity of $\psi$ and
$F([0,\infty))=N$ yield
\[
 \int_{(L-2s,\infty)}\psi(x)\,dF(x)
 \leq N\psi(L-2s)
 =N\bigl(e^{-(m-2)}-e^{-m}\bigr)^2
 =(e^2-1)^2Ne^{-2m}.
\]
Finally, suppose that $L\leq\varepsilon'$.  For $0\leq x\leq L$, set
$\theta=x/L\in[0,1]$.  Since $r\mapsto e^{-r}$ is convex,
\[
 e^{-x/s}=e^{-m\theta}
 \leq(1-\theta)+\theta e^{-m}.
\]
Hence
\[
 0\leq e^{-x/s}-e^{-m}
 \leq(1-\theta)(1-e^{-m})
 \leq1-\frac{x}{L}
 \leq1-\frac{x}{\varepsilon'}.
\]
After squaring, we obtain
$\psi(x)\leq(1-x/\varepsilon')^2$ on $[0,L]$.  For $x>L$,
$\psi(x)=0$, so $\psi\leq\varphi_{\varepsilon'}$ on
$[0,\infty)$.  This proves \textup{(iv)}.
\end{proof}
\subsection{One-scale collision estimate}
\begin{proposition}[One-scale collision estimate]\label{prop:generator}
Assume \cref{ass:CL}.  Let $s>0$, $m\geq4$, and $L=ms$, and suppose that
$L\leq\varepsilon\leq1/16$.  For
$F\in\mathcal B_1^+([0,\infty))$ of mass $N$ and energy $E$, set
$$U=\int_{[0,\infty)}\psi_{s,m}(x)\,dF(x),\qquad A=F([L,\varepsilon]).$$
Put $c_0=\sqrt{NE}$, then
\begin{equation}\label{eq:generator}
 \begin{aligned}
 &\iint\mathcal J[\psi_{s,m}]\,dF^{\otimes2}+\iiint\mathcal K[\psi_{s,m}]\,dF^{\otimes3}\\
 &\qquad\geq
 \frac{(e^{-1/2}-e^{-4})^2}{12\sqrt2}
 b_0\varepsilon^{\eta-1}A^2s^{3/2}-c_0U\\
&\qquad +\frac{b_0}{2(e+1)^2}\varepsilon^{\eta-1}A^2
 \frac{s}{\varepsilon+s}
 \bigl(U-(e^2-1)^2Ne^{-2m}\bigr)_+.
 \end{aligned}
\end{equation}
\end{proposition}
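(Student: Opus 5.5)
The plan is to bound the quadratic form $\iint\mathcal J[\psi]\,dF^{\otimes2}$ and the cubic form $\iiint\mathcal K[\psi]\,dF^{\otimes3}$ separately, with $\psi=\psi_{s,m}$. The quadratic form should supply the source term and the $-c_0U$ term. The cubic form should supply the Bose-enhanced feedback term.

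\emph{Quadratic part.} I split $\mathcal J[\psi](y,z)$ according to whether $(y,z)\in[L,\varepsilon]^2$. Off this square I use the pointwise bound \eqref{eq:J-pointwise-lower} of \cref{lem:quadratic-lower}. On the square $\psi(y)=\psi(z)=0$, because $\operatorname{supp}\psi\subset[0,L]$ and $\psi(L)=0$.

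\begin{itemize}
\item For $y,z\in[L,\varepsilon]$ we have $\Delta\psi(x,y,z)=\psi(x)+\psi(x_*)\geq\psi(x)\geq0$. Hence $\mathcal J[\psi](y,z)\geq\frac12\int_0^{s/2}W(x,y,z)\psi(x)\sqrt x\,dx$.
\item Arranging $x<y\le z$ (by the symmetry \eqref{eq:W-last-two-symmetry}), \cref{lem:W-lower} gives $W\ge\frac{b_0}2\varepsilon^{\eta-1}$, and \cref{lem:profile}(i) gives $\psi(x)\ge(e^{-1/2}-e^{-4})^2$ on $[0,s/2]$.
\item Since $\int_0^{s/2}\sqrt x\,dx=\frac{2}{3}(s/2)^{3/2}=\frac{s^{3/2}}{3\sqrt2}$, this yields the pointwise lower bound $\frac{(e^{-1/2}-e^{-4})^2}{12\sqrt2}b_0\varepsilon^{\eta-1}s^{3/2}$ on the square.
\end{itemize}

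Integrating over $[L,\varepsilon]^2$ gives exactly the factor $A^2$. On the complement, \eqref{eq:J-pointwise-lower} gives at least $-\frac12(\psi(y)\sqrt z+\psi(z)\sqrt y)$. I extend this (nonpositive) bound to the whole plane; on the square it vanishes anyway. Integrating with \eqref{eq:extension-half-moment}, exactly as in \eqref{eq:J-integrated-lower}, gives $\geq -c_0U$.

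\emph{Cubic part.} Since $\psi$ is convex, \cref{lem:convex-positivity} lets me discard everything except the nonnegative region $\{x\in[0,L-2s],\ y<z,\ y,z\in[L,\varepsilon]\}$. Here the $y=z$ diagonal is dropped or kept via $\chi_{y,z}$, and \eqref{eq:ordered-square} recovers $A^2$. On this region the integrand is $\chi W\Delta_{\rm sym}\psi$. Since $z\ge L$ we have $\psi(z)=0$ and $\psi(z+y-x)=0$, so $\Delta_{\rm sym}\psi=\psi(z-(y-x))$.

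The main obstacle is to bound $\psi(z-y+x)$ below by a multiple of $\psi(x)$ when $x\le L-2s$. Here $z-y\in[0,\varepsilon-L]$ is not $\le s$, so \cref{lem:profile}(ii) does not apply directly. I would first try to average over $y$ in thin cells of width $s$. Split $[L,\varepsilon]$ into cells of length $\asymp s$, so there are about $(\varepsilon+s)/s$ of them, and keep only pairs $(y,z)$ in the same cell. Within a cell $0\le z-y\le s$, so \cref{lem:profile}(ii) gives $\psi(x+z-y)\ge(e+1)^{-2}\psi(x)$.

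By Cauchy--Schwarz, $\sum_{\rm cells}F(C_i)^2\geq A^2/\#\text{cells}\gtrsim A^2 s/(\varepsilon+s)$. This is the cell-square bound that should produce the factor $s/(\varepsilon+s)$. I would then combine it with $W\ge\frac{b_0}2\varepsilon^{\eta-1}$, which holds because $x<y\le z\le\varepsilon$ and \cref{lem:W-lower} applies.

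Finally I integrate $\psi(x)$ over $x\in[0,L-2s]$ and use $x<y$, which holds since $y\ge L$. This gives $\int_{[0,L-2s]}\psi\,dF\ge U-(e^2-1)^2Ne^{-2m}$ by \cref{lem:profile}(iii). Taking the positive part is harmless because the whole term is nonnegative. The cell bookkeeping, including the ordering $y<z$ versus $\chi_{y,z}$ and the cells meeting the endpoint $\varepsilon$, is where I expect the constants to need care.
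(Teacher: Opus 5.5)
Your proposal is correct and follows the paper's proof essentially step for step. The quadratic form is split over $[L,\varepsilon]^2$ and its complement in the same way. For the cubic form, the paper also partitions $[L,\varepsilon]$ into $n=\max\{1,\lceil(\varepsilon-L)/s\rceil\}\le(\varepsilon+s)/s$ cells of length at most $s$, applies Cauchy--Schwarz to get $\sum_iF(I_i)^2\ge\frac{s}{\varepsilon+s}A^2$, uses \cref{lem:profile}(ii) within each cell, and handles the boundary error with \cref{lem:profile}(iii).

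One point in your bookkeeping needs fixing: you should keep the diagonal $y=z$ with $\chi_{y,z}=1$, which \eqref{eq:convex-positivity} allows since its region is $\{x<y\le z\}$, rather than restricting to $y<z$. If you drop the diagonal, any atoms of $F$ in $[L,\varepsilon]$ are lost, and \eqref{eq:ordered-square} would no longer give $F(I_i)^2$.
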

\begin{proof}
Write $\psi=\psi_{s,m}$.  We deal with the quadratic and cubic
contributions separately.

\medskip
\noindent\emph{Step 1: Quadratic gain from $[L,\varepsilon]$.}
Set
\(
 D=[L,\varepsilon]^2.
\)
By \eqref{eq:W-last-two-symmetry},
$\mathcal J[\psi](y,z)$ is symmetric in $y$ and $z$.  Thus, for
$(y,z)\in D$, it is without loss of generality to consider $y\leq z$.
Since $\operatorname{supp}\psi\subset[0,L]$ and $y,z\geq L$, we have
$\psi(y)=\psi(z)=0$.  Hence, for $0\leq x\leq y+z$,
\[
 \Delta\psi(x,y,z)
 =\psi(x)+\psi(y+z-x)
 \geq\psi(x)\geq 0.
\]
Since $W\geq0$, discarding the contribution from
$x\in(s/2,y+z]$ gives
\[
 \mathcal J[\psi](y,z)
 \geq\frac12\int_0^{s/2}
 W(x,y,z)\psi(x)\sqrt{x}\,dx.
\]
For $0\leq x\leq s/2$, we have
$x\leq s/2<L\leq y\leq z\leq\varepsilon$.  Hence
\cref{lem:W-lower,lem:profile}(i) imply
\begin{align}
 \mathcal J[\psi](y,z)
 &\geq
 \frac12\int_0^{s/2}
 W(x,y,z)\psi(x)\sqrt{x}\,dx \notag\\
 &\geq
 \frac{(e^{-1/2}-e^{-4})^2}{4}\,
 b_0\varepsilon^{\eta-1}
 \int_0^{s/2}\sqrt{x}\,dx \notag\\
 &=
 \frac{(e^{-1/2}-e^{-4})^2}{12\sqrt2}\,
 b_0\varepsilon^{\eta-1}s^{3/2},
 \qquad (y,z)\in D.
 \label{eq:quadratic-pointwise}
\end{align}

\medskip
\noindent\emph{Step 2: Quadratic loss outside \(D=[L,\varepsilon]^2\).}
By \eqref{eq:J-well-defined-bound} and
\eqref{eq:extension-half-moment},
$\mathcal J[\psi]\in L^1(F^{\otimes2})$. Write
\[
 \iint\mathcal J[\psi]\,dF^{\otimes2}
 =
 \iint_D\mathcal J[\psi]\,dF^{\otimes2}
 +
 \iint_{D^c}\mathcal J[\psi]\,dF^{\otimes2}.
\]
Integrating \eqref{eq:quadratic-pointwise} over $D$ and using
$F^{\otimes2}(D)=F([L,\varepsilon])^2=A^2$, we obtain
\[
 \iint_D\mathcal J[\psi]\,dF^{\otimes2}
 \geq
 \frac{(e^{-1/2}-e^{-4})^2}{12\sqrt2}\,
 b_0\varepsilon^{\eta-1}A^2s^{3/2}.
\]
On $D^c$,
\eqref{eq:J-pointwise-lower} then gives
\begin{align}
 \iint_{D^c}\mathcal J[\psi]\,dF^{\otimes2}
 &\geq
 -\frac12\iint_{D^c}
 \bigl(\psi(y)\sqrt z+\psi(z)\sqrt y\bigr)
 \,dF(y)\,dF(z) \notag\\
 &\geq
 -\frac12\iint
 \bigl(\psi(y)\sqrt z+\psi(z)\sqrt y\bigr)
 \,dF(y)\,dF(z) \notag\\
 &=
 -\left(\int\psi\,dF\right)
  \left(\int\sqrt{x}\,dF(x)\right) \notag\\
 &\geq-c_0U,
 \label{eq:quadratic-remainder}
\end{align}
where we used
\eqref{eq:extension-half-moment} in the last line.
Combining the estimates on $D$ and $D^c$ yields
\begin{equation}\label{eq:quadratic-integrated}
 \iint\mathcal J[\psi]\,dF^{\otimes2}
 \geq
 \frac{(e^{-1/2}-e^{-4})^2}{12\sqrt2}\,
 b_0\varepsilon^{\eta-1}A^2s^{3/2}
 -c_0U.
\end{equation}

\medskip
\noindent\emph{Step 3: Partition of the shell for the cubic term.}
Choose a disjoint partition of \([L,\varepsilon]\) by
\[
 n=\max\left\{
 1,\left\lceil\frac{\varepsilon-L}{s}\right\rceil
 \right\},\qquad  [L,\varepsilon]=\bigcup_{i=1}^n I_i
\]
into Borel intervals of equal length, using half-open intervals except
for the last one.  When $\varepsilon=L$, take $I_1=\{L\}$.  Then
$\operatorname{diam}(I_i)\leq s$ for every $i$, and
\[
 n\leq1+\frac{\varepsilon-L}{s}
 \leq\frac{\varepsilon+s}{s},\qquad \sum_{i=1}^nF(I_i)=A.
\]
The Cauchy--Schwarz inequality gives
\begin{equation}\label{eq:cell-square-lower}
 \sum_{i=1}^nF(I_i)^2
 \geq\frac1n
 \left(\sum_{i=1}^nF(I_i)\right)^2
 \geq\frac{s}{\varepsilon+s}A^2.
\end{equation}

\medskip
\noindent\emph{Step 4: Pointwise lower bound for the cubic gain.}
Fix $i$ and consider
\[
 0\leq x\leq L-2s,\qquad
  y,z\in I_i,\qquad  y\leq z.
\]
Since $I_i\subset[L,\varepsilon]$ and
$\operatorname{diam}(I_i)\leq s$, we have
\(
 x<y\leq z\leq\varepsilon,
 0\leq z-y\leq s.
\)
Moreover,
\(
 y+z-x
 \geq2L-(L-2s)
 =L+2s>L,\) and \(
 z\geq L.
\)
Since \(\operatorname{supp} \psi \subset [0,L],\)
$\psi(y+z-x)=\psi(z)=0$.  Consequently,
\begin{equation}\label{eq:centered-profile-exact}
 \Delta_{\mathrm{sym}}\psi(x,y,z)
 =\psi(x+z-y).
\end{equation}
Applying \cref{lem:profile}(ii) with $d=z-y\in[0,s]$, we obtain
\[
 \Delta_{\mathrm{sym}}\psi(x,y,z)
 \geq\frac{1}{(e+1)^2}\psi(x).
\]
Since $0\leq x<y\leq z\leq\varepsilon$,
\eqref{eq:W-lower} then implies
\begin{equation}\label{eq:cubic-pointwise-explicit}
 W(x,y,z)\Delta_{\mathrm{sym}}\psi(x,y,z)
 \geq
 \frac{b_0}{2(e+1)^2}
 \varepsilon^{\eta-1}\psi(x).
\end{equation}

\medskip
\noindent\emph{Step 5: Integration of the cubic gain and the boundary error.}
By \eqref{eq:convex-positivity},
\begin{align}
 \iiint\mathcal K[\psi]\,dF^{\otimes3}
 &\geq
 \iiint_{\{0\leq x<y\leq z\}}
 \chi_{y,z}W(x,y,z)
 \Delta_{\mathrm{sym}}\psi(x,y,z)\,dF^{\otimes3}
 \notag\\
 &\geq
 \sum_{i=1}^n
 \int_{\substack{0\leq x\leq L-2s\\
                  y,z\in I_i,\ y\leq z}}
 \chi_{y,z}W(x,y,z)
 \Delta_{\mathrm{sym}}\psi(x,y,z)
 \,dF(x)\,dF(y)\,dF(z)
 \notag\\
 &\geq
 \frac{b_0}{2(e+1)^2}\varepsilon^{\eta-1}
 \left(\int_{[0,L-2s]}\psi(x)\,dF(x)\right)
 \sum_{i=1}^n
 \int_{\substack{(y,z)\in I_i^2\\y\leq z}}
 \chi_{y,z}\,dF(y)\,dF(z)
 \notag\\
 &=
 \frac{b_0}{2(e+1)^2}\varepsilon^{\eta-1}
 \left(\int_{[0,L-2s]}\psi(x)\,dF(x)\right)
 \sum_{i=1}^nF(I_i)^2
 \notag\\
 &\geq
 \frac{b_0}{2(e+1)^2}\varepsilon^{\eta-1}
 \frac{s}{\varepsilon+s}A^2
 \int_{[0,L-2s]}\psi(x)\,dF(x).
 \label{eq:cubic-before-boundary}
\end{align}
It remains to compare the last cutoff integral with $U$.  Since
$\psi$ is supported in $[0,L]$,
\[
 \int_{[0,L-2s]}\psi(x)\,dF(x)
 =
 U-\int_{(L-2s,\infty)}\psi(x)\,dF(x).
\]
By \cref{lem:profile}(iii),
\[
 \int_{[0,L-2s]}\psi(x)\,dF(x)
 \geq U-(e^2-1)^2Ne^{-2m}.
\]
The integral on the left is nonnegative, and hence
\begin{equation}\label{eq:boundary-positive-part}
 \int_{[0,L-2s]}\psi(x)\,dF(x)
 \geq
 \bigl(U-(e^2-1)^2Ne^{-2m}\bigr)_+.
\end{equation}
Substituting \eqref{eq:boundary-positive-part} into
\eqref{eq:cubic-before-boundary} gives
\[
 \iiint\mathcal K[\psi]\,dF^{\otimes3}
 \geq
 \frac{b_0}{2(e+1)^2}\varepsilon^{\eta-1}A^2
 \frac{s}{\varepsilon+s}
 \bigl(U-(e^2-1)^2Ne^{-2m}\bigr)_+.
\]
Adding this inequality to \eqref{eq:quadratic-integrated} proves
\eqref{eq:generator}.
\end{proof}
\section{Multiscale weighted estimate}\label{sec:multiscale}
We now pass from the one-scale estimate in \cref{sec:one-scale} to the
simultaneous treatment of infinitely many low-energy scales.  This is the
main step toward producing an atom at zero energy.  We consider the dyadic
scales
\[
 \varepsilon_k=2^{-k-4},
 \qquad
 S_k=(\varepsilon_{k+1},\varepsilon_k],
 \qquad
 A_k(t)=F_t(S_k),
\]
where $A_k(t)$ is the mass in the $k$-th shell.  At each scale, we choose
$m_k$ and $s_k$ so that $L_k=m_ks_k=\varepsilon_{k+2}$, and define
\[
 U_k(t)=\int_{[0,\infty)}
 \psi_{s_k,m_k}(x)\,dF_t(x),
\]
where \(\operatorname{supp}\psi_{s_k,m_k}\subset [0,\varepsilon_{k+2}].\)
Since
$S_k\subset[\varepsilon_{k+2},\varepsilon_k]$, applying
\cref{prop:generator} with $L=\varepsilon_{k+2}$ and
$\varepsilon=\varepsilon_k$ gives, after absorbing the boundary error
for all sufficiently large $k$,
\[
 U_k'(t)+c_0U_k(t)
 \geq
 \gamma_kA_k(t)^2
 \left(U_k(t)+\frac{Q_k}{2}\right),
 \qquad c_0=\sqrt{NE}.
\]
Thus the mass in $S_k$ drives the transfer of particles into the region
below $\varepsilon_{k+2}$.  The term containing $Q_k$ initiates this
transfer, while the term containing $U_k(t)$ amplifies it through Bose
enhancement.
Fix a reference time $t_0\geq0$.  To measure the progress of the
transfer from the shell $S_k$ to energies below
$\varepsilon_{k+2}$, we first use the integrating factor
$e^{c_0(t-t_0)}$ to remove the linear loss in
\eqref{eq:relative-Uk-growth}.  Setting
$V_k(t)=e^{c_0(t-t_0)}U_k(t)$, we obtain
\[
 V_k'(t)
 \geq
 \gamma_kA_k(t)^2
 \left(V_k(t)+\frac{Q_k}{2}\right),
 \qquad t\geq t_0.
\]
We therefore introduce the
logarithmic functional
\begin{equation}\label{eq:Z-def-preview}
 Z_k(t)
 =
 \log\left(
 1+\frac{2V_k(t)}{Q_k}
 \right)
 =
 \log\left(
 1+\frac{2e^{c_0(t-t_0)}U_k(t)}{Q_k}
 \right).
\end{equation}
Upon differentiation, the logarithm cancels the factor
$V_k+Q_k/2$ and gives
\[
 Z_k'(t)\geq\gamma_kA_k(t)^2.
\]
Thus $Z_k(t)$ is a nondecreasing quantity that measures the progress of
the transfer from $S_k$ to the region below $\varepsilon_{k+2}$.
To specify when this transfer has reached the level needed later, fix a
target value $h>0$ and define
\[
 \Lambda_k(h)
 =
 \log\left(1+\frac{2h}{Q_k}\right).
\]
By the definition of $Z_k$,
\[
 Z_k(t)\geq\Lambda_k(h)
 \quad\Longleftrightarrow\quad
 e^{c_0(t-t_0)}U_k(t)\geq h.
\]
Since
$\psi_{s_k,m_k}\leq\varphi_{\varepsilon_{k+2}}$, this implies
\[
 N_{0,2}(F_t,\varepsilon_{k+2})
 \geq e^{-c_0(t-t_0)}h.
\]
We say that scale $k$ is \emph{$h$-complete at time $t$} when this
threshold is reached. For a single scale,
\[
 p_k(t)
 =
 \min\left\{1,\frac{Z_k(t)}{\Lambda_k(h)}\right\}
\]
then records the fraction of the required transfer that has been completed.
It is less than one while scale $k$ is $h$-incomplete and equals one
after the target has been reached.  To combine the completion levels of
all scales $k\geq j$, we need to choose suitable weights
$\omega_k(h)$ and consider
\[
 \mathcal P(t)
 =
 \sum_{k=j}^{\infty}\omega_k(h)p_k(t),
 \qquad
 \Theta_j(h)
 =
 \sum_{k=j}^{\infty}\omega_k(h).
\]
The functional then satisfies
\[
 0\leq\mathcal P(t)\leq\Theta_j(h),
 \qquad
 \mathcal P'(t)
 \geq
 \sum_{\substack{k\geq j\\
                  Z_k(t)<\Lambda_k(h)}}
 \frac{A_k(t)^2}{\omega_k(h)}
\]
for almost every $t\geq t_0$.  Hence $\mathcal P$ has a fixed upper
bound, while every incomplete shell that still carries mass forces $\mathcal P$ to grow. In the following \cref{sec:atom-formation}, we use these two bounds in a contradiction
argument.  If the condensate mass stays below the target level, then a
fixed amount of positive-energy mass must lie in $h$-incomplete shells.
The derivative lower bound then forces $\mathcal P$ to increase beyond
its upper bound $\Theta_j(h)$ in finite time, and hence a zero-energy atom
must form.
Throughout this section, $F_t(dx)$ denotes a conservative isotropic measure
solution with conserved mass $N>0$ and energy $E>0$, and we set
$c_0=\sqrt{NE}$.
\begin{lemma}[Borel measurability of interval masses]
\label{lem:set-mass-measurable}
For every $0\leq a<b<\infty$, the map
\(
 t\mapsto F_t((a,b])
\)
is Borel measurable on $[0,\infty)$.
\end{lemma}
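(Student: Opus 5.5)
The plan is to write the indicator of $(a,b]$ as a monotone limit of bounded continuous functions and to use the narrow continuity of $(F_t)$ from \cref{lem:narrow-continuity}. Pointwise limits of continuous functions of $t$ are Borel, so this gives the claim.

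\emph{Step 1: approximating test functions.} For $n\geq1$ with $1/n<b-a$, define the piecewise linear $\chi_n\in C_b([0,\infty))$ by
\[
 \chi_n(x)=\min\bigl\{1,\ n(x-a)_+,\ n(b+1/n-x)_+\bigr\}.
\]
Then $0\leq\chi_n\leq1$. It vanishes for $x\leq a$ and for $x\geq b+1/n$, and it equals $1$ on $[a+1/n,b]$.

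\emph{Step 2: pointwise convergence.} I claim $\chi_n(x)\to\mathbf 1_{(a,b]}(x)$ for every $x\geq0$.
\begin{enumerate}[label=\textup{(\alph*)},leftmargin=*]
\item If $x\leq a$, then $\chi_n(x)=0$ for all $n$.
\item If $a<x\leq b$, then $x\geq a+1/n$ for large $n$, so $\chi_n(x)=1$ eventually.
\item If $x>b$, then $x\geq b+1/n$ for large $n$, so $\chi_n(x)=0$ eventually.
\end{enumerate}

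\emph{Step 3: passage to the limit.} The functions $\chi_n$ are bounded by $1$, and $F_t$ is a finite measure with mass $N$. Dominated convergence therefore gives
\[
 \int\chi_n\,dF_t\longrightarrow F_t((a,b])\qquad\text{for every }t\geq0.
\]
By \cref{lem:narrow-continuity}, each map $t\mapsto\int\chi_n\,dF_t$ is continuous, hence Borel. A pointwise limit of Borel functions is Borel, so $t\mapsto F_t((a,b])$ is Borel measurable.

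\emph{Main point of care.} There is no real obstacle. The one thing to handle correctly is the asymmetry of the half-open interval. The approximants must vanish identically at and below $a$, and must equal $1$ at $b$ for large $n$; Step 2 checks exactly this, so that the limit is $\mathbf 1_{(a,b]}$ rather than the indicator of some other interval. An alternative route is a $\pi$--$\lambda$ argument starting from open sets, where $t\mapsto F_t(U)$ is lower semicontinuous by narrow continuity, but the explicit approximation above is shorter.
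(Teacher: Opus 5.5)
Your proof is correct and follows essentially the paper's route: narrow continuity from \cref{lem:narrow-continuity}, bounded continuous approximants of an indicator, dominated convergence, and Borel measurability of pointwise limits of continuous maps; the only difference is that the paper approximates $\mathbf 1_{[0,r]}$ by $(1-n(x-r)_+)_+$ and then uses $F_t((a,b])=F_t([0,b])-F_t([0,a])$, whereas you approximate $\mathbf 1_{(a,b]}$ directly. One small wording point: your $\chi_n$ is not monotone in $n$ (it increases on $(a,b]$ and decreases on $(b,\infty)$), so drop ``monotone'' from your opening sentence --- nothing is lost, since Step 3 only uses dominated convergence.
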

\begin{proof}
Fix $r\geq0$.  For $n\geq1$, define
\[
 \chi_{r,n}(x)
 =
 \bigl(1-n(x-r)_+\bigr)_+,
 \qquad x\geq0.
\]
Then $\chi_{r,n}\in C_b([0,\infty))$ and
\[
 \chi_{r,n}(x)\downarrow\mathbf 1_{[0,r]}(x)
 \qquad\text{as }n\to\infty.
\]
By \cref{lem:narrow-continuity},
\[
 t\longmapsto\int_{[0,\infty)}\chi_{r,n}(x)\,dF_t(x)
\]
is continuous for every $n$.  Since $0\leq\chi_{r,n}\leq1$, dominated
convergence gives
\[
 F_t([0,r])
 =
 \lim_{n\to\infty}
 \int_{[0,\infty)}\chi_{r,n}(x)\,dF_t(x).
\]
Thus $t\mapsto F_t([0,r])$ is Borel measurable as a pointwise limit of
continuous functions.
Finally, for $0\leq a<b$,
\[
 F_t((a,b])
 =
 F_t([0,b])-F_t([0,a]).
\]
Both terms on the right are Borel measurable in $t$, and hence so is
$t\mapsto F_t((a,b])$.
\end{proof}
\subsection{Dyadic low-energy scales
\texorpdfstring{\((\varepsilon_k)_{k\geq j}\)}
{epsilon(k), k >= j}
and logarithmic functional
\texorpdfstring{\(Z_k(t)\)}{Z(k,t)}}
For $k\geq0$, set
\[
 \varepsilon_k=2^{-k-4},
 \qquad
 S_k=(\varepsilon_{k+1},\varepsilon_k],
 \qquad
 m_k=4\left(1+\log\frac1{\varepsilon_k}\right),
 \qquad
 s_k=\frac{\varepsilon_k}{4m_k}.
\]
For $t\geq0$, define
\begin{equation}\label{eq:Uk-Ak}
 A_k(t)=F_t(S_k),
 \qquad
 U_k(t)=\int_{[0,\infty)}
 \psi_{s_k,m_k}(x)\,dF_t(x).
\end{equation}
Here $A_k(t)$ is the mass in the dyadic shell $S_k$, while $U_k(t)$
measures a weighted amount of mass at a lower energy scale.  Indeed, the
choice of $s_k$ gives
\[
 L_k:=m_ks_k=\frac{\varepsilon_k}{4}
 =\varepsilon_{k+2},
 \qquad
 e^{-2m_k}=e^{-8}\varepsilon_k^8.
\]
By \cref{lem:profile},
$\operatorname{supp}\psi_{s_k,m_k}\subset[0,L_k]$.  Since
$L_k=\varepsilon_{k+2}<\varepsilon_{k+1}$, \(\psi_{s_k,m_k}\)
is supported strictly below $S_k$.  Thus the pair $(A_k,U_k)$ records
the mass in $S_k$ and the low-energy mass produced below
$\varepsilon_{k+2}$ respectively.  Moreover, the boundary error in
\cref{prop:generator} is
\[
 (e^2-1)^2Ne^{-2m_k}
 =(e^2-1)^2Ne^{-8}\varepsilon_k^8,
\]
which will be absorbed for all sufficiently large $k$ in
\cref{lem:scales}.  Since $S_k$ is a half-open interval,
\cref{lem:set-mass-measurable} implies that $A_k$ is Borel measurable.
By \cref{lem:profile,lem:test-extension}, $U_k\in C^1([0,\infty))$.
The one-scale estimate contains terms from quadratic source and cubic Bose-enhancement.  To write both terms with the same prefactor, define
\begin{equation}\label{eq:gamma-Q}
 \gamma_k=
 \frac{b_0}{2(e+1)^2}\varepsilon_k^{\eta-1}
 \frac{s_k}{\varepsilon_k+s_k},
 \qquad
 Q_k=
 \frac{(e^{-1/2}-e^{-4})^2(e+1)^2}{6\sqrt2}
 (\varepsilon_k+s_k)\sqrt{s_k}.
\end{equation}
Then
\[
 \gamma_kQ_k
 =
 \frac{(e^{-1/2}-e^{-4})^2}{12\sqrt2}\,
 b_0\varepsilon_k^{\eta-1}s_k^{3/2},
\]
so $\gamma_kQ_k$ is the coefficient of the quadratic source in
\cref{prop:generator}, while $\gamma_k$ is the coefficient of the cubic
feedback.
We now apply \cref{prop:generator} with
$m=m_k$, $s=s_k$, $L=L_k=\varepsilon_{k+2}$, and
$\varepsilon=\varepsilon_k$. Since
$S_k\subset[L_k,\varepsilon_k]$,
$F_t([L_k,\varepsilon_k])\geq A_k(t)$.  Therefore,
\cref{lem:test-extension,prop:generator} and
\eqref{eq:gamma-Q} give
\begin{align}
 U_k'(t)+c_0U_k(t)
 &\geq
 \gamma_kF_t([L_k,\varepsilon_k])^2
 \left[
 Q_k+
 \bigl(U_k(t)-(e^2-1)^2Ne^{-2m_k}\bigr)_+
 \right]
 \notag\\
 &\geq
 \gamma_kA_k(t)^2
 \left[
 Q_k+
 \bigl(U_k(t)-(e^2-1)^2Ne^{-2m_k}\bigr)_+
 \right]
 \label{eq:Uk-generator}
\end{align}
for every $t\geq0$.
The next lemma shows that the boundary correction \((e^2-1)^2Ne^{-2m_k}\) can be absorbed at all sufficiently small energy scales.
\begin{lemma}
\label{lem:scales}
Uniformly in $k\geq0$,
\begin{equation}\label{eq:scale-estimates}
 m_k\asymp1+k,
 \qquad
 \gamma_k\asymp
 b_0\frac{\varepsilon_k^{\eta-1}}{1+k},
 \qquad
 Q_k\asymp
 \frac{\varepsilon_k^{3/2}}{\sqrt{1+k}}.
\end{equation}
Moreover, there exists an integer $k_0=k_0(N)\geq0$ such that
\begin{equation}\label{eq:boundary-error}
 (e^2-1)^2Ne^{-2m_k}\leq\frac{Q_k}{2}
\end{equation}
for every $k\geq k_0$.
\end{lemma}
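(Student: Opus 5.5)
The plan is a direct computation from the definitions $\varepsilon_k=2^{-k-4}$, $m_k=4(1+\log(1/\varepsilon_k))$ and $s_k=\varepsilon_k/(4m_k)$.

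\emph{Asymptotics of $m_k$.} Since $\log(1/\varepsilon_k)=(k+4)\log 2$, we have $m_k=4+4(k+4)\log2$. This is bounded above and below by absolute multiples of $1+k$, which gives $m_k\asymp 1+k$.

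\emph{Asymptotics of $\gamma_k$ and $Q_k$.} From $m_k\asymp1+k$ we get $s_k\asymp\varepsilon_k/(1+k)$. Because $m_k\geq4$, also $s_k\leq\varepsilon_k/16$, so $\varepsilon_k\leq\varepsilon_k+s_k\leq\tfrac{17}{16}\varepsilon_k$. Substituting into \eqref{eq:gamma-Q}:
\begin{itemize}
\item $s_k/(\varepsilon_k+s_k)\asymp s_k/\varepsilon_k\asymp 1/(1+k)$, hence $\gamma_k\asymp b_0\varepsilon_k^{\eta-1}/(1+k)$;
\item $(\varepsilon_k+s_k)\sqrt{s_k}\asymp\varepsilon_k\cdot\varepsilon_k^{1/2}(1+k)^{-1/2}$, hence $Q_k\asymp\varepsilon_k^{3/2}/\sqrt{1+k}$.
\end{itemize}
The numerical prefactors in \eqref{eq:gamma-Q} are absolute constants, and $b_0$ has been kept explicit, so all comparison constants are absolute.

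\emph{The boundary error.} Use $e^{-2m_k}=e^{-8}\varepsilon_k^{8}$. The inequality \eqref{eq:boundary-error} is then equivalent to
\[
(e^2-1)^2Ne^{-8}\varepsilon_k^8\leq \tfrac12 Q_k .
\]
By the lower bound $Q_k\geq c\,\varepsilon_k^{3/2}(1+k)^{-1/2}$ with an absolute $c>0$, it suffices to have
\[
\varepsilon_k^{13/2}\sqrt{1+k}\leq \frac{c\,e^{8}}{2(e^2-1)^2N}.
\]
The left side equals $2^{-13(k+4)/2}\sqrt{1+k}$, which decreases to $0$ as $k\to\infty$. A $k_0$ depending only on $N$ therefore exists, and monotonicity gives the inequality for all $k\geq k_0$.

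No step is a real obstacle. The only care needed is to confirm that the implicit constants are absolute, and to use monotonicity so that the inequality holds for every $k\geq k_0$ rather than merely for large $k$.
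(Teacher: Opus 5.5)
Your proposal is correct and follows essentially the same route as the paper. Both compute $m_k=4(1+(k+4)\log 2)$ and read off the bounds for $\gamma_k$ and $Q_k$; the paper uses the exact identity $s_k/(\varepsilon_k+s_k)=1/(4m_k+1)$ where you use $\varepsilon_k\le\varepsilon_k+s_k\le\tfrac{17}{16}\varepsilon_k$. Both then use $e^{-2m_k}=e^{-8}\varepsilon_k^8$ to show that the boundary error divided by $Q_k$ is $O_N(\sqrt{1+k}\,2^{-13k/2})\to0$. Your monotonicity remark is correct but not needed, since convergence to zero already gives a threshold $k_0(N)$ beyond which the inequality holds for every $k$.
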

\begin{proof}
Since $\varepsilon_k=2^{-k-4}$, by the definitions,
\[
 m_k=4\bigl(1+(k+4)\log2\bigr),
 \qquad
 s_k=\frac{\varepsilon_k}{4m_k},
 \qquad
 \frac{s_k}{\varepsilon_k+s_k}
 =\frac1{4m_k+1}.
\]
It follows that
\[
 \gamma_k
 =\frac{b_0\varepsilon_k^{\eta-1}}
 {2(e+1)^2(4m_k+1)}
\]
and
\[
 Q_k=
 \frac{(e^{-1/2}-e^{-4})^2(e+1)^2}{12\sqrt2}
 \varepsilon_k^{3/2}
 \frac{1+(4m_k)^{-1}}{\sqrt{m_k}}.
\]
Because $m_k\asymp1+k$, these identities imply all three estimates in
\eqref{eq:scale-estimates}.
It remains to compare $Q_k$ with the boundary error.  Since
$e^{-2m_k}=e^{-8}\varepsilon_k^8$ and
$Q_k\asymp\varepsilon_k^{3/2}/\sqrt{m_k}$, we have
\[
 \frac{(e^2-1)^2Ne^{-2m_k}}{Q_k}
 \lesssim_N
 \sqrt{m_k}\,\varepsilon_k^{13/2}
 \lesssim_N
 \sqrt{1+k}\,2^{-13k/2}
 \to 0.
\]
Hence the ratio is at most $1/2$ for every sufficiently large $k$.
Choosing $k_0=k_0(N)$ accordingly proves
\eqref{eq:boundary-error}.
\end{proof}
Fix $k_0=k_0(N)$ as in \cref{lem:scales}.  For $k\geq k_0$, set
\(
 r_k=(e^2-1)^2Ne^{-2m_k}.
\)
By \eqref{eq:boundary-error}, $0\leq r_k\leq Q_k/2$.  Since
$(u-r_k)_+\geq u-r_k$ for every $u\geq0$, we have
\[
 Q_k+(u-r_k)_+
 \geq u+Q_k-r_k
 \geq u+\frac{Q_k}{2}.
\]
Applying this inequality with $u=U_k(t)$ in
\eqref{eq:Uk-generator} gives
\begin{equation}\label{eq:relative-Uk-growth}
 U_k'(t)+c_0U_k(t)
 \geq
 \gamma_kA_k(t)^2
 \left(U_k(t)+\frac{Q_k}{2}\right),
 \qquad t\geq0.
\end{equation}
Fix a reference time $t_0\geq0$.  Set
\[
 V_k(t)=e^{c_0(t-t_0)}U_k(t),
 \qquad t\geq t_0.
\]
The factor $e^{c_0(t-t_0)}$ absorbs the linear term $c_0U_k$ in
\eqref{eq:relative-Uk-growth}.  Indeed,
\begin{align}
 V_k'(t)
 &=e^{c_0(t-t_0)}
   \bigl(U_k'(t)+c_0U_k(t)\bigr) \notag\\
 &\geq
 \gamma_kA_k(t)^2
 \left(
 V_k(t)+\frac{e^{c_0(t-t_0)}Q_k}{2}
 \right) \notag\\
 &\geq
 \gamma_kA_k(t)^2
 \left(V_k(t)+\frac{Q_k}{2}\right),
 \qquad t\geq t_0.
 \label{eq:Vk-growth}
\end{align}
We then introduce the logarithmic functional as
\begin{equation}\label{eq:Z-def}
 Z_k(t)
 =
 \log\left(
 \frac{V_k(t)+Q_k/2}{Q_k/2}
 \right)
 =
 \log\left(
 1+\frac{2e^{c_0(t-t_0)}U_k(t)}{Q_k}
 \right),
 \qquad t\geq t_0.
\end{equation}
We suppress the dependence of $V_k$ and $Z_k$ on the reference time
$t_0$ in the notation.
\begin{lemma}[Growth of \(Z_k\)]
\label{lem:log-growth}
For every $t_0\geq0$ and $k\geq k_0$, the function
$Z_k$ belongs to $C^1([t_0,\infty))$ and satisfies
\begin{equation}\label{eq:Z-growth}
 Z_k'(t)\geq\gamma_kA_k(t)^2,
 \qquad t\geq t_0.
\end{equation}
In particular, $Z_k$ is nondecreasing.
\end{lemma}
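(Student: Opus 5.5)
The argument is a direct differentiation of the logarithm, using the regularity of \(U_k\) and the differential inequality \eqref{eq:Vk-growth} established just before the statement.

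\emph{Regularity.} By \cref{lem:profile}, \(\psi_{s_k,m_k}\in C_b^{1,1}([0,\infty))\). Hence \cref{lem:test-extension} gives \(U_k\in C^1([0,\infty))\). It follows that \(V_k(t)=e^{c_0(t-t_0)}U_k(t)\) belongs to \(C^1([t_0,\infty))\), being a product of \(C^1\) functions.

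Since \(\psi_{s_k,m_k}\geq0\), we have \(U_k\geq0\), and so
\[
 V_k(t)+\frac{Q_k}{2}\geq\frac{Q_k}{2}>0
\]
(recall \(Q_k>0\) by \eqref{eq:gamma-Q}). The argument of the logarithm in \eqref{eq:Z-def} therefore stays bounded below by \(1\). Since \(\log\) is smooth on \([1,\infty)\), the composition \(Z_k\) is \(C^1\) on \([t_0,\infty)\).

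\emph{Growth inequality.} The chain rule gives
\[
 Z_k'(t)=\frac{V_k'(t)}{V_k(t)+Q_k/2}.
\]
Because \(k\geq k_0\), \cref{lem:scales} makes \eqref{eq:relative-Uk-growth} valid, and multiplying it by the integrating factor yields \eqref{eq:Vk-growth}:
\[
 V_k'\geq\gamma_kA_k^2\,(V_k+Q_k/2),\qquad t\geq t_0.
\]
The denominator \(V_k+Q_k/2\) is strictly positive, so dividing by it preserves the inequality and gives
\[
 Z_k'(t)\geq\gamma_kA_k(t)^2\qquad\text{for every }t\geq t_0.
\]

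\emph{Monotonicity.} The right-hand side \(\gamma_kA_k(t)^2\) is nonnegative, so \(Z_k'\geq0\) on \([t_0,\infty)\). Since \(Z_k\) is \(C^1\), the mean value theorem shows that \(Z_k\) is nondecreasing.

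The only step needing any care is confirming that \eqref{eq:Vk-growth} holds for every \(t\geq t_0\), not merely almost everywhere. This is the case because \(U_k\) is genuinely \(C^1\) and \eqref{eq:Uk-generator} holds pointwise. The measurability of \(A_k\) plays no role here: the inequality is pointwise.
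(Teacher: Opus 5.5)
Your proposal is correct and follows essentially the same route as the paper: you get \(C^1\) regularity of \(U_k\) from \cref{lem:test-extension}, apply the chain rule to \(Z_k=\log\bigl((V_k+Q_k/2)/(Q_k/2)\bigr)\), and divide \eqref{eq:Vk-growth} by the positive quantity \(V_k+Q_k/2\). The only additions are details the paper leaves implicit, namely that \(U_k\geq0\) keeps the argument of the logarithm at least \(1\) and that the inequality holds pointwise rather than almost everywhere.
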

\begin{proof}
By \cref{lem:test-extension}, $U_k\in C^1([0,\infty))$, and hence
$V_k,Z_k\in C^1([t_0,\infty))$.  From
\eqref{eq:Z-def} and \eqref{eq:Vk-growth},
\[
 Z_k'(t)
 =
 \frac{V_k'(t)}{V_k(t)+Q_k/2}
 \geq
 \gamma_kA_k(t)^2.
\]
This proves \eqref{eq:Z-growth}.  Since its right-hand side is
nonnegative, $Z_k$ is nondecreasing.
\end{proof}
Fix $t_0\geq0$ and $h>0$.  For the functions $Z_k$ defined relative to
this reference time \(t_0\), set
\begin{equation}\label{eq:Lambda-def}
 \Lambda_k(h):=
 \log\left(1+\frac{2h}{Q_k}\right)>0.
\end{equation}
By \eqref{eq:Z-def},
\begin{equation}\label{eq:level-equivalence}
 V_k(t)=e^{c_0(t-t_0)}U_k(t)\geq h
 \quad\Longleftrightarrow\quad
 Z_k(t)\geq\Lambda_k(h).
\end{equation}
\begin{definition}[Complete and incomplete scales]
\label{def:complete-scale}
Let $k\geq k_0$ and $t\geq t_0$.  We say that scale $k$ is
\emph{$h$-complete at time $t$} if
\(
 e^{c_0(t-t_0)}U_k(t)\geq h,
\)
or equivalently, if $Z_k(t)\geq\Lambda_k(h)$.  Otherwise, scale $k$ is
\emph{$h$-incomplete at time $t$}.
\end{definition}
Since $Z_k$ is nondecreasing, a scale that is $h$-complete at some time
remains $h$-complete at all later times.  Moreover, if scale $k$ is
$h$-complete at time $t$, then
\(
 U_k(t)\geq e^{-c_0(t-t_0)}h.
\)
The identity $m_ks_k=\varepsilon_{k+2}$ and
\cref{lem:profile}(iv) give
$\psi_{s_k,m_k}\leq\varphi_{\varepsilon_{k+2}}$.  Therefore
\begin{equation}\label{eq:level-cutoff-bound}
 N_{0,2}(F_t,\varepsilon_{k+2})
 \geq U_k(t)
 \geq e^{-c_0(t-t_0)}h
\end{equation}
whenever scale $k$ is $h$-complete at time $t$.
\subsection{Weighted multiscale functional
\texorpdfstring{\(\mathcal P\)}{P}}
Fix $h>0$, integers $j\geq k_0$, and $M\geq j$. The estimate \eqref{eq:Z-growth} controls the
transfer of mass at one scale at a time.  Once scale $k$ becomes
$h$-complete, the transfer from $S_k$ to energies below
$\varepsilon_{k+2}$ has reached the level required in the subsequent
condensate argument in \cref{sec:atom-formation}.  We therefore set
\[
 p_k(t)
 =
 \min\left\{1,\frac{Z_k(t)}{\Lambda_k(h)}\right\},
 \qquad j\leq k\leq M,
\]
which records the fraction of this transfer that has been completed:
$p_k(t)=Z_k(t)/\Lambda_k(h)$ while scale $k$ is $h$-incomplete, and
$p_k(t)=1$ once it becomes $h$-complete.
We first measure the completion of all scales from $j$ to $M$ simultaneously, and let \(M\to\infty\) later.
Choose positive weights $\omega_k>0$ and define the weighted
multiscale functional
\[
 \mathcal P_{\omega,M}(t)
 =
 \sum_{k=j}^{M}\omega_k p_k(t)
 =
 \sum_{k=j}^{M}\omega_k
 \min\left\{1,\frac{Z_k(t)}{\Lambda_k(h)}\right\}.
\]
Since $0\leq p_k(t)\leq1$,
\[
 0\leq\mathcal P_{\omega,M}(t)
 \leq\sum_{k=j}^{M}\omega_k.
\]
If scale $k$ is $h$-incomplete, then
$Z_k(t)<\Lambda_k(h)$. \cref{lem:log-growth} gives
\[
 \frac{d}{dt}
 \left(
  \omega_k\frac{Z_k(t)}{\Lambda_k(h)}
 \right)
 \geq
 \frac{\omega_k\gamma_k}{\Lambda_k(h)}A_k(t)^2.
\]
Consequently, almost everywhere,
\[
 \mathcal P_{\omega,M}'(t)
 \geq
 \sum_{\substack{j\leq k\leq M\\
                  Z_k(t)<\Lambda_k(h)}}
 \frac{\omega_k\gamma_k}{\Lambda_k(h)}A_k(t)^2.
\]
In \cref{sec:atom-formation}, we will consider the mass of \(h\)-incomplete shells to obtain condensation, as we introduced in the beginning of this section. To convert the derivative estimate above into a bound involving the mass
in incomplete shells, let $I\subset\{j,\ldots,M\}$ be a collection of
incomplete scales.  Cauchy--Schwarz then gives
\begin{equation}\label{eq:generic-weight-CS}
 \left(\sum_{k\in I}A_k(t)\right)^2
 \leq
 \left(
  \sum_{k\in I}
  \frac{\omega_k\gamma_k}{\Lambda_k(h)}A_k(t)^2
 \right)
 \left(
  \sum_{k\in I}
  \frac{\Lambda_k(h)}{\omega_k\gamma_k}
 \right)\leq \mathcal P_{\omega,M}'(t)\left(
  \sum_{k\in I}
  \frac{\Lambda_k(h)}{\omega_k\gamma_k}
 \right).
\end{equation}
The provisional weights \(\omega_k\) therefore produce two different sums in the subsequent arguments:
\[
 \sum_k\omega_k,
 \qquad
 \sum_k\frac{\Lambda_k(h)}{\omega_k\gamma_k}.
\]
The first bounds \(\mathcal P\) itself, while the second is the
Cauchy--Schwarz factor in \eqref{eq:generic-weight-CS}.  Both of them must remain
finite on arbitrarily fine tails to make sure that the contradiction argument for condensation in \cref{sec:atom-formation} works. The optimal balance between these two requirements follows from
\begin{equation}\label{eq:weight-optimality}
 \left(\sum_{k=j}^{M}\omega_k\right)
 \left(
  \sum_{k=j}^{M}
  \frac{\Lambda_k(h)}{\omega_k\gamma_k}
 \right)
 \geq
 \left(
  \sum_{k=j}^{M}
  \sqrt{\frac{\Lambda_k(h)}{\gamma_k}}
 \right)^2.
\end{equation}
Equality holds precisely when
\(
 \omega_k
 =c\sqrt{{\Lambda_k(h)}/{\gamma_k}}
\)
with the same constant $c>0$ at every scale.  We take $c=1$ and define
\begin{equation}\label{eq:weights}
 \omega_k(h):=
 \sqrt{\frac{\Lambda_k(h)}{\gamma_k}},
 \qquad
 \Theta_j(h):=
 \sum_{k=j}^{\infty}\omega_k(h),
 \qquad k,j\geq k_0.
\end{equation}
\begin{lemma}[Summability of \(\Theta_j(h)\)]
\label{lem:weight-sum}
For every fixed $h>0$,
\begin{equation}\label{eq:omega-asymp}
 \omega_k(h)
 \asymp_{h,\eta}
 b_0^{-1/2}(1+k)
 \varepsilon_k^{(1-\eta)/2}
 \qquad \text{as }k\to\infty.
\end{equation}
Consequently, if $\eta<1$, then for every $j\geq k_0$,
\begin{equation}\label{eq:Theta-finite}
 0<\Theta_j(h)<\infty,
 \qquad
 \Theta_j(h)
 \lesssim_{h,\eta}
 b_0^{-1/2}(1+j)
 \varepsilon_j^{(1-\eta)/2}
 \to 0
 \quad \text{as }j\to\infty.
\end{equation}
\end{lemma}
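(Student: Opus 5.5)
We prove the lemma by computing the size of $\omega_k(h)$ directly from \cref{lem:scales} and then summing a geometric-type series.

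\emph{Step 1: size of $\Lambda_k(h)$.} By \eqref{eq:scale-estimates}, $Q_k\asymp\varepsilon_k^{3/2}(1+k)^{-1/2}\to0$. Hence, for fixed $h>0$, the ratio $2h/Q_k\to\infty$, and
\[
 \Lambda_k(h)=\log\Bigl(1+\frac{2h}{Q_k}\Bigr)
 =\log\frac{2h}{Q_k}+o(1).
\]
Using $\log(1/\varepsilon_k)=(k+4)\log2$, this gives
\[
 \log\frac{2h}{Q_k}=\tfrac32(k+4)\log2+\tfrac12\log(1+k)+\log(2h)+O(1),
\]
so $\Lambda_k(h)\asymp_h 1+k$ as $k\to\infty$. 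The $h$-dependence enters only through the additive constant $\log(2h)$, which is dominated for large $k$. For each fixed $k$, $\Lambda_k(h)\in(0,\infty)$.

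\emph{Step 2: size of $\omega_k(h)$.} Also by \eqref{eq:scale-estimates}, $\gamma_k\asymp b_0\varepsilon_k^{\eta-1}/(1+k)$. Therefore
\[
 \frac{\Lambda_k(h)}{\gamma_k}\asymp_h \frac{(1+k)^2\varepsilon_k^{1-\eta}}{b_0},
\]
and taking square roots yields \eqref{eq:omega-asymp}.

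\emph{Step 3: summation.} Each $\omega_k(h)$ is positive and finite, so $\Theta_j(h)>0$. For the upper bound, put $q=2^{-(1-\eta)/2}<1$, which uses $\eta<1$. Then $\varepsilon_k^{(1-\eta)/2}=\varepsilon_j^{(1-\eta)/2}q^{k-j}$, and $(1+k)\le(1+j)(1+k-j)$. This gives
\[
 \sum_{k\ge j}(1+k)\varepsilon_k^{(1-\eta)/2}
 \le(1+j)\varepsilon_j^{(1-\eta)/2}\sum_{i\ge0}(1+i)q^i
 =\frac{(1+j)\varepsilon_j^{(1-\eta)/2}}{(1-q)^2}.
\]
The constant $(1-q)^{-2}$ depends only on $\eta$.

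The asymptotic bound \eqref{eq:omega-asymp} holds only for $k\ge k_1(h)$. For $j\ge k_1$ this already gives \eqref{eq:Theta-finite}. For $k_0\le j<k_1$, the finitely many extra terms are finite, and the ratio of $\Theta_j(h)$ to the right-hand side is bounded on this finite range. We absorb that ratio into the implied constant, which then depends on $h,\eta$ (and $N$ through $k_0$, which is harmless). Finally, $(1+j)2^{-(j+4)(1-\eta)/2}\to0$, giving $\Theta_j(h)\to0$.

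\emph{Main care point.} The only delicate step is Step 1. The lower bound $\Lambda_k\gtrsim 1+k$ requires $2h/Q_k\ge e$, say, which holds for large $k$ depending on $h$. This is exactly why the statement is asymptotic and the constants depend on $h$.
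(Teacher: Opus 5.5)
Your proposal is correct and follows essentially the same route as the paper. The shared steps are: $\Lambda_k(h)\asymp_h 1+k$ because $Q_k\to0$; $\omega_k(h)^2=\Lambda_k(h)/\gamma_k$ via \cref{lem:scales}; a tail sum with ratio $2^{-(1-\eta)/2}$; and absorbing the finitely many initial indices into the constant. Your bound $(1+k)\le(1+j)(1+k-j)$ is just a tidier version of the paper's explicit summation.

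One small remark: the dependence on $N$ through $k_0$ that you mention can be avoided. Both $\omega_k(h)$ and the normalizing factor $b_0^{-1/2}(1+j)\varepsilon_j^{(1-\eta)/2}$ make sense for every $j\ge0$, and $b_0$ factors out exactly since $\gamma_k$ is linear in $b_0$. So you may take the maximum of the ratio over $0\le j<k_1(h)$, which keeps the constant depending only on $h,\eta$ as stated.
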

\begin{proof}
By \eqref{eq:scale-estimates},
\[
 \log\frac1{Q_k}
 =
 \frac32(k+4)\log2+\frac12\log(1+k)+O(1)
 \asymp 1+k.
\]
In particular, $Q_k\to0$.  Fix $h>0$.  For all sufficiently large
$k$, we have $2h/Q_k\geq1$, and hence
\[
 \log\frac{2h}{Q_k}
 \leq
 \Lambda_k(h)
 =
 \log\left(1+\frac{2h}{Q_k}\right)
 \leq
 \log\frac{4h}{Q_k}.
\]
It follows that
\[
 \Lambda_k(h)
 =
 \log\frac1{Q_k}+O_h(1)
 \asymp_h 1+k.
\]
Using the definition of $\omega_k(h)$ and the estimate for $\gamma_k$,
we obtain
\[
 \omega_k(h)^2
 =
 \frac{\Lambda_k(h)}{\gamma_k}
 \asymp_{h,\eta}
 b_0^{-1}(1+k)^2\varepsilon_k^{1-\eta}.
\]
Taking square roots proves \eqref{eq:omega-asymp}. Since $\eta<1$, setting
\(
 \rho=2^{-(1-\eta)/2}\in(0,1),
\)
we have
\(
 \varepsilon_k^{(1-\eta)/2}=\rho^{k+4}.
\)
For all sufficiently large $j$, \eqref{eq:omega-asymp} gives
\begin{align*}
 \Theta_j(h)
 &\lesssim_{h,\eta}
 b_0^{-1/2}
 \sum_{k=j}^{\infty}(1+k)\rho^{k+4}\\
 &=
 b_0^{-1/2}\rho^{j+4}
 \sum_{\ell=0}^{\infty}(1+j+\ell)\rho^\ell\\
 &=
 b_0^{-1/2}\rho^{j+4}
 \left(
 \frac{1+j}{1-\rho}
 +\frac{\rho}{(1-\rho)^2}
 \right)\\
 &\lesssim_{h,\eta}
 b_0^{-1/2}(1+j)\varepsilon_j^{(1-\eta)/2}.
\end{align*}
Hence $\Theta_j(h)<\infty$ for all sufficiently large $j$.
For the finitely many indices $k_0\leq j$ preceding this range, the same
bound follows after increasing the constant.  Since every
$\omega_k(h)$ is positive, $\Theta_j(h)>0$.  Finally,
\[
 (1+j)\varepsilon_j^{(1-\eta)/2}
 =
 (1+j)2^{-(j+4)(1-\eta)/2}
 \to 0,
\]
which proves \eqref{eq:Theta-finite}.
\end{proof}
We now pass to the limit \(M\to\infty\).  Fix $t_0\geq0$, $h>0$, and
$j\geq k_0$, and define
\begin{equation}\label{eq:Pcal}
 \mathcal P(t)
 =
 \sum_{k=j}^{\infty}\omega_k(h)
 \min\left\{1,\frac{Z_k(t)}{\Lambda_k(h)}\right\},
 \qquad t\geq t_0.
\end{equation}
\begin{lemma}[Estimate of \(\mathcal P\) and \(\mathcal P'\)]
\label{lem:truncated-sum}
The series in \eqref{eq:Pcal} converges uniformly on
$[t_0,\infty)$ and defines a bounded, nondecreasing, locally absolutely
continuous function.  It satisfies
\begin{equation}\label{eq:P-upper-global}
 0\leq\mathcal P(t)\leq\Theta_j(h),
 \qquad t\geq t_0,
\end{equation}
and, for almost every $t\geq t_0$,
\begin{equation}\label{eq:truncated-sum-derivative}
 \mathcal P'(t)
 =
 \sum_{\substack{k\geq j\\
                  Z_k(t)<\Lambda_k(h)}}
 \frac{\omega_k(h)}{\Lambda_k(h)}Z_k'(t)\geq \sum_{\substack{k\geq j\\
                  Z_k(t)<\Lambda_k(h)}}
 \frac{A_k(t)^2}{\omega_k(h)}.
\end{equation}
\end{lemma}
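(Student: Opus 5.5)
Write $p_k(t)=\min\{1,Z_k(t)/\Lambda_k(h)\}$. The plan is to work term by term, then pass to the series. By \cref{lem:log-growth}, each $Z_k$ is $C^1$ and nondecreasing on $[t_0,\infty)$. Hence each $p_k$ is nondecreasing, takes values in $[0,1]$, and is locally Lipschitz, being the composition of the $1$-Lipschitz map $u\mapsto\min\{1,u\}$ with a $C^1$ function.

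Since $0\leq\omega_k(h)p_k(t)\leq\omega_k(h)$ and $\Theta_j(h)<\infty$ by \cref{lem:weight-sum}, the Weierstrass M-test gives uniform convergence of \eqref{eq:Pcal} on $[t_0,\infty)$. It also gives \eqref{eq:P-upper-global}. The limit is nondecreasing as a sum of nondecreasing functions.

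For absolute continuity I would avoid differentiating the series directly. Instead I would use monotonicity. Each partial sum $\mathcal P_M=\sum_{k=j}^M\omega_kp_k$ is nondecreasing and absolutely continuous. The tail $\mathcal P-\mathcal P_M$ is also nondecreasing. For $t_0\le a<b$ this gives
\[
\mathcal P(b)-\mathcal P(a)\ \ge\ \int_a^b\mathcal P_M'(\tau)\,d\tau .
\]
On the other hand, $\mathcal P_M'=\sum_{k\le M}\omega_kp_k'\ge0$ a.e., and it increases in $M$. Monotone convergence then identifies
\[
\int_a^b\sum_{k\ge j}\omega_kp_k'\,d\tau=\sum_{k\ge j}\omega_k\bigl(p_k(b)-p_k(a)\bigr)=\mathcal P(b)-\mathcal P(a).
\]
Here each $p_k$ is absolutely continuous, and the interchange of sum and difference is justified because every term is nonnegative. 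Thus $\mathcal P$ is the integral of the $L^1_{\rm loc}$ function $g=\sum_k\omega_kp_k'$, so it is locally absolutely continuous with $\mathcal P'=g$ a.e.

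It remains to compute $g$ pointwise. Let $\tau_k=\inf\{t\geq t_0: Z_k(t)\geq\Lambda_k(h)\}$. Monotonicity of $Z_k$ gives $p_k=Z_k/\Lambda_k$ on $[t_0,\tau_k)$ and $p_k\equiv1$ on $(\tau_k,\infty)$. So off the countable set $\{\tau_k\}$, we have $p_k'(t)=Z_k'(t)/\Lambda_k(h)$ when $Z_k(t)<\Lambda_k(h)$, and $p_k'(t)=0$ otherwise. Removing this countable null set and the null sets where the series fails to equal $\mathcal P'$ yields the identity in \eqref{eq:truncated-sum-derivative}.

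The inequality in \eqref{eq:truncated-sum-derivative} then follows termwise from \eqref{eq:Z-growth}, using $\omega_k\gamma_k/\Lambda_k=1/\omega_k$ by \eqref{eq:weights}. The only delicate step is this interchange of differentiation and infinite summation. Monotonicity together with monotone convergence handles it, with no uniform bound on the $Z_k'$ needed.
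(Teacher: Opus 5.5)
Your proof is correct and follows the paper's argument. The Weierstrass test gives uniform convergence and $0\le\mathcal P\le\Theta_j(h)$, your monotone-convergence identity $\int_a^b\sum_k\omega_kp_k'\,d\tau=\mathcal P(b)-\mathcal P(a)$ is the paper's Tonelli step, and the termwise bound with $\omega_k\gamma_k/\Lambda_k(h)=1/\omega_k(h)$ finishes the proof; the only differences are cosmetic: you compute $p_k'$ through the hitting time $\tau_k$, which is slightly more explicit than the paper's appeal to the chain rule, and your preliminary inequality $\mathcal P(b)-\mathcal P(a)\ge\int_a^b\mathcal P_M'$ is not needed.
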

\begin{proof}
For brevity, write
\(
 \omega_k=\omega_k(h),
 \Lambda_k=\Lambda_k(h),
\)
and set
\[
 p_k(t)
 =
 \min\left\{1,\frac{Z_k(t)}{\Lambda_k}\right\},
 \qquad k\geq j.
\]
By \cref{lem:log-growth}, $Z_k\in C^1([t_0,\infty))$ is
nondecreasing.  Hence $p_k$ is nondecreasing and locally absolutely
continuous.  Moreover, chain rule implies
\begin{equation}\label{eq:truncation-chain-rule}
 p_k'(t)
 =
 \frac{Z_k'(t)}{\Lambda_k}
 \mathbf 1_{\{Z_k(t)<\Lambda_k\}}
\end{equation}
for almost every $t\geq t_0$. By \cref{lem:weight-sum},
$\displaystyle\sum_{k=j}^\infty\omega_k=\Theta_j(h)<\infty$.  Since
$0\leq p_k(t)\leq1$,
the Weierstrass test therefore gives uniform convergence of
\eqref{eq:Pcal} on $[t_0,\infty)$ and
\(
 0\leq\mathcal P(t)\leq\Theta_j(h).
\)
Since every $p_k$ is nondecreasing, $\mathcal P$ is also
nondecreasing.
We next prove local absolute continuity and identify the derivative for \(\mathcal P\).
Fix $t_0\leq a<b<\infty$ and define
\[
 g_k(t)
 =
 \frac{Z_k'(t)}{\Lambda_k}
 \mathbf 1_{\{Z_k(t)<\Lambda_k\}},
 \qquad
 D(t)=\sum_{k=j}^{\infty}\omega_kg_k(t).
\]
By \eqref{eq:truncation-chain-rule}, $p_k'=g_k$ almost everywhere.
Since $g_k$ are nonnegative, Tonelli's theorem gives
\begin{align*}
 \int_a^b D(t)\,dt=
 \sum_{k=j}^{\infty}\omega_k
 \int_a^b g_k(t)\,dt=
 \sum_{k=j}^{\infty}\omega_k
 (p_k(b)-p_k(a))
 \leq
 \sum_{k=j}^{\infty}\omega_k
 =
 \Theta_j(h).
\end{align*}
Thus $D\in L^1(a,b)$.  For every $t\in[a,b]$, another application of
Tonelli's theorem yields
\begin{align*}
 \mathcal P(t)-\mathcal P(a)=
 \sum_{k=j}^{\infty}\omega_k
 (p_k(t)-p_k(a))=
 \sum_{k=j}^{\infty}\omega_k
 \int_a^t g_k(\tau)\,d\tau
 =
 \int_a^tD(\tau)\,d\tau.
\end{align*}
Hence $\mathcal P$ is absolutely continuous on $[a,b]$ and
$\mathcal P'(t)=D(t)$ for almost every $t\in[a,b]$.  Since $[a,b]$
was arbitrary,
\[
 \mathcal P'(t)
 =
 \sum_{\substack{k\geq j\\
                  Z_k(t)<\Lambda_k(h)}}
 \frac{\omega_k(h)}{\Lambda_k(h)}Z_k'(t)
\]
for almost every $t\geq t_0$.
Finally, if $Z_k(t)<\Lambda_k(h)$, then
\eqref{eq:Z-growth} gives
\[
 \frac{\omega_k(h)}{\Lambda_k(h)}Z_k'(t)
 \geq
 \frac{\omega_k(h)\gamma_k}{\Lambda_k(h)}A_k(t)^2
 =
 \frac{A_k(t)^2}{\omega_k(h)}.
\]
Summing over all $h$-incomplete scales proves
\eqref{eq:truncated-sum-derivative}.
\end{proof}
\section{Finite-time condensation and convergence to equilibrium}
\label{sec:atom-formation}
We now use the estimate of \(\mathcal P\) and \(\mathcal P'\) in
\cref{lem:truncated-sum} to prove finite-time condensation.  In the
first subsection, we show that
$N_{0,2}(F_{t_0},\varepsilon_j)\geq h$ at a sufficiently fine energy
scale forces a zero-energy atom to form in finite time.  Fix
$\delta\in(0,1/2)$ and consider the interval
$I=[t_0,t_0+\tau_j(h,\delta)]$, where $j$ is chosen so that
$\tau_j(h,\delta)\leq1$.  The proof is by contradiction.  Suppose that
\[
 F_t(\{0\})
 <
 (1-\delta)e^{-c_0(t-t_0)}h
 \qquad\text{for every }t\in I.
\]
For each $t\in I$, we choose an index $\nu(t)$ such that every scale
$k\geq\nu(t)$ is $h$-incomplete, while
$N_{0,2}(F_t,\varepsilon_{\nu(t)})
\geq e^{-c_0(t-t_0)}h$.  Since
\[
 N_{0,2}(F_t,\varepsilon_{\nu(t)})
 \leq
 F_t(\{0\})+F_t((0,\varepsilon_{\nu(t)}]),
\]
the assumed upper bound on $F_t(\{0\})$ gives
\[
 F_t((0,\varepsilon_{\nu(t)}])
 =
 \sum_{k\geq\nu(t)}A_k(t)
 \geq
 \delta e^{-c_0(t-t_0)}h.
\]
Thus, at every time $t\in I$, a fixed amount of mass lies in
the $h$-incomplete shells.  The derivative estimate
\eqref{eq:truncated-sum-derivative} then gives a uniform positive lower bound for
$\mathcal P'(t)$ on $I$. Integrating this bound forces $\mathcal P$ to
increase by more than its upper bound $\Theta_j(h)$, contradicting
\eqref{eq:P-upper-global}.  Hence a condensate forms within $I$, and
\eqref{eq:atom-persistence} keeps it positive at all later times.
In the second subsection, we obtain the required assumption $N_{0,2}(F_{t_0},\varepsilon_j)\geq h$
from semi-strong relaxation to the Bose--Einstein equilibrium when
$N_c>0$, equivalently $\overline T/\overline T_c<1$.  Choosing $j$ sufficiently large and
applying the criterion from the first subsection gives finite-time
condensation.  Applying the same criterion with
$t_0=t-\tau_j(h,\delta)$ for every sufficiently large $t$ then yields $F_t(\{0\})\to N_c$.  The Cai--Lu strong-convergence
criterion \cite{CaiLu2026} then upgrades the semi-strong convergence to
$\|F_t-F_{\rm be}\|_1\to0$.
\subsection{\texorpdfstring
{\(N_{0,2}(F_{t_0},\varepsilon_j)\geq h\)}
{N02(F(t0), epsilon(j)) >= h}
implies condensation}
\begin{theorem}
\label{thm:low-energy-to-condensate}
Assume \cref{ass:CL}, and let $F_t$ be a conservative isotropic measure
solution with conserved mass $N>0$ and energy $E>0$; set
$c_0=\sqrt{NE}$.  Fix $t_0\geq0$, $h\in(0,N]$,
$\delta\in(0,1/2)$, and an integer $j\geq k_0$, where
$k_0=k_0(N)$ is supplied by \cref{lem:scales}.  Suppose
$N_{0,2}(F_{t_0},\varepsilon_j)\geq h$.
Set
\begin{equation}\label{eq:formation-interval}
 \tau_j(h,\delta)
 =2e^{2c_0}\delta^{-2}h^{-2}\Theta_j(h)^2.
\end{equation}
If $\tau_j(h,\delta)\leq1$, then there exists
$T\in[t_0,t_0+\tau_j(h,\delta)]$ such that
\begin{equation}\label{eq:atom-lower-bound}
 F_T(\{0\})
 \geq(1-\delta)e^{-c_0(T-t_0)}h>0.
\end{equation}
Moreover, every $t\geq T$ satisfies
$F_t(\{0\})\geq(1-\delta)e^{-c_0(t-t_0)}h$.
The interval length also satisfies
\begin{equation}\label{eq:formation-interval-rate}
 \tau_j(h,\delta)
 \lesssim_{h,\delta,N,E,b_0,\eta}
 (1+j)^2\varepsilon_j^{1-\eta}\to 0.
\end{equation}
\end{theorem}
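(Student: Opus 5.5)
The plan is a contradiction argument on the interval \(I=[t_0,t_0+\tau]\), where \(\tau=\tau_j(h,\delta)\leq1\). It uses the functional \(\mathcal P\) of \cref{lem:truncated-sum}, built with reference time \(t_0\), level \(h\), and starting index \(j\).

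Suppose \(F_t(\{0\})<(1-\delta)e^{-c_0(t-t_0)}h\) for every \(t\in I\). Fix \(t\in I\).

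\textbf{Step 1: only finitely many scales are complete.} If scale \(k\) is \(h\)-complete at time \(t\), then \eqref{eq:level-cutoff-bound} gives \(N_{0,2}(F_t,\varepsilon_{k+2})\geq e^{-c_0(t-t_0)}h\). By \eqref{eq:cutoff-limit}, \(N_{0,2}(F_t,\varepsilon_{k+2})\downarrow F_t(\{0\})\), and \(F_t(\{0\})\) is strictly below \(e^{-c_0(t-t_0)}h\). Hence only finitely many \(k\geq j\) can be complete at time \(t\).

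\textbf{Step 2: choice of \(\nu(t)\).} If no scale \(k\geq j\) is complete, set \(\nu(t)=j\). Then the hypothesis and \eqref{eq:cutoff-persistence} give
\[
N_{0,2}(F_t,\varepsilon_j)\geq e^{-c_0(t-t_0)}h .
\]
Otherwise, set \(\nu(t)=2+\max\{k\geq j: k \text{ complete}\}\). Then \eqref{eq:level-cutoff-bound} gives the same bound with \(\varepsilon_{\nu(t)}\) in place of \(\varepsilon_j\). In both cases every \(k\geq\nu(t)\) is incomplete, because the scales \(\max+1\) and \(\max+2\) are also incomplete.

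\textbf{Step 3: mass in the incomplete shells.} Since \(\varphi_\varepsilon\leq\mathbf 1_{[0,\varepsilon]}\), and the shells \(S_k\), \(k\geq\nu\), partition \((0,\varepsilon_\nu]\), we get
\[
\sum_{k\geq\nu(t)}A_k(t)\geq N_{0,2}(F_t,\varepsilon_{\nu(t)})-F_t(\{0\})>\delta e^{-c_0(t-t_0)}h\geq\delta e^{-c_0}h,
\]
where the last inequality uses \(t-t_0\leq1\).

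\textbf{Step 4: integrate \(\mathcal P'\).} By \eqref{eq:truncated-sum-derivative} and Cauchy--Schwarz (with an infinite sum, handled by truncation and monotone limits), for almost every \(t\in I\),
\[
\mathcal P'(t)\geq\sum_{k\geq\nu(t)}\frac{A_k^2}{\omega_k}\geq\frac{\delta^2e^{-2c_0}h^2}{\Theta_j(h)} .
\]
Integrating over \(I\), using the local absolute continuity of \(\mathcal P\), gives
\[
\mathcal P(t_0+\tau)-\mathcal P(t_0)\geq\tau\,\frac{\delta^2e^{-2c_0}h^2}{\Theta_j(h)}=2\Theta_j(h).
\]
This contradicts \(0\leq\mathcal P\leq\Theta_j(h)\), since \(\Theta_j(h)>0\).

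\textbf{Step 5: existence of \(T\) and persistence.} The contradiction shows that some \(T\in I\) has \(F_T(\{0\})\geq(1-\delta)e^{-c_0(T-t_0)}h\), which is \eqref{eq:atom-lower-bound}. For \(t\geq T\), \eqref{eq:atom-persistence} gives
\[
F_t(\{0\})\geq e^{-c_0(t-T)}(1-\delta)e^{-c_0(T-t_0)}h=(1-\delta)e^{-c_0(t-t_0)}h .
\]

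\textbf{Step 6: rate for \(\tau_j\).} The rate \eqref{eq:formation-interval-rate} follows by squaring \eqref{eq:Theta-finite}. The constant depends on \(c_0\), hence on \(N\) and \(E\), as well as on \(h,\delta,b_0,\eta\).

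The main obstacle is the bookkeeping in Step 2. One must make sure that the bound on \(N_{0,2}\) holds at \(\varepsilon_{\nu(t)}\) while every scale from \(\nu(t)\) on is incomplete; the offset of \(2\) in \(\nu(t)\) matches \(L_k=\varepsilon_{k+2}\). One must also make sure that the Cauchy--Schwarz step over infinitely many shells is justified, which follows from \(\sum_{k\geq\nu}\omega_k\leq\Theta_j(h)<\infty\).
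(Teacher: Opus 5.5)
Your proposal is correct and follows the paper's proof essentially step for step. It uses the same contradiction hypothesis on $I$, finiteness of the complete scales via \eqref{eq:cutoff-limit}, the index $\nu(t)$ with offset $2$, the tail-mass bound $\sum_{k\geq\nu(t)}A_k(t)\geq\delta e^{-c_0}h$, the truncated Cauchy--Schwarz bound $\mathcal P'\geq\delta^2e^{-2c_0}h^2/\Theta_j(h)$ integrated against $0\leq\mathcal P\leq\Theta_j(h)$, and then persistence via \eqref{eq:atom-persistence} and the rate from \eqref{eq:Theta-finite}.
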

\begin{proof}
Set
\(
 \tau=\tau_j(h,\delta),
 I=[t_0,t_0+\tau],
\)
and let $\mathcal P$ be the multiscale functional
\eqref{eq:Pcal} associated with $t_0$, $h$, and $j$.
Assume for contradiction that
\begin{equation}\label{eq:contrary-atom-bound}
 F_t(\{0\})
 <
 (1-\delta)e^{-c_0(t-t_0)}h
 \qquad\text{for every }t\in I.
\end{equation}

\medskip
\noindent\emph{Step 1: The set of complete scales is finite.}
For each $t\in I$, define
\[
 \mathcal C_h(t)
 =
 \{k\geq j:Z_k(t)\geq\Lambda_k(h)\}.
\]
We claim that $\mathcal C_h(t)$ is finite.  Otherwise, there is a
sequence $k_n\in\mathcal C_h(t)$ such that $k_n\to\infty$.  Since each
$k_n$ is $h$-complete, \eqref{eq:level-cutoff-bound} gives
\[
 N_{0,2}(F_t,\varepsilon_{k_n+2})
 \geq e^{-c_0(t-t_0)}h.
\]
Letting $n\to\infty$ and using
$\varepsilon_{k_n+2}\to0$ and \eqref{eq:cutoff-limit}, we obtain
\[
 F_t(\{0\})
 \geq e^{-c_0(t-t_0)}h,
\]
which contradicts \eqref{eq:contrary-atom-bound}.  Thus
$\mathcal C_h(t)$ is finite.

\medskip
\noindent\emph{Step 2: Construction of an incomplete tail.}
Define
\begin{equation}\label{eq:nu-definition}
 \nu(t)=
 \begin{cases}
 j,
 &\mathcal C_h(t)=\varnothing,\\
 2+\max\mathcal C_h(t),
 &\mathcal C_h(t)\neq\varnothing.
 \end{cases}
\end{equation}
We first show that
\begin{equation}\label{eq:nu-cutoff-bound}
 N_{0,2}(F_t,\varepsilon_{\nu(t)})
 \geq e^{-c_0(t-t_0)}h.
\end{equation}
If $\mathcal C_h(t)=\varnothing$, then $\nu(t)=j$, and
\eqref{eq:cutoff-persistence} together with the assumption
$N_{0,2}(F_{t_0},\varepsilon_j)\geq h$ gives
\[
 N_{0,2}(F_t,\varepsilon_{\nu(t)})
 =
 N_{0,2}(F_t,\varepsilon_j)
 \geq e^{-c_0(t-t_0)}h.
\]
If $\mathcal C_h(t)\neq\varnothing$, let
$k_*=\max\mathcal C_h(t)$.  Then $k_*$ is $h$-complete and
$\nu(t)=k_*+2$, so \eqref{eq:level-cutoff-bound} again gives
\eqref{eq:nu-cutoff-bound}.
By the definition of $\nu(t)$, every scale $k\geq\nu(t)$ is
$h$-incomplete.  Moreover,
\begin{equation}\label{eq:shell-partition}
 \bigcup_{k=\nu(t)}^\infty S_k
 =
 (0,\varepsilon_{\nu(t)}].
\end{equation}

\medskip
\noindent\emph{Step 3: Positive mass in the incomplete shells.}
For every $\varepsilon>0$,
\begin{equation}\label{eq:cutoff-local-mass}
 \begin{aligned}
 N_{0,2}(F_t,\varepsilon)
 &=
 F_t(\{0\})
 +
 \int_{(0,\varepsilon]}
 \left(1-\frac{x}{\varepsilon}\right)^2\,dF_t(x)\\
 &\leq
 F_t(\{0\})+F_t((0,\varepsilon]).
 \end{aligned}
\end{equation}
Using \eqref{eq:shell-partition}, we have
\[
 F_t((0,\varepsilon_{\nu(t)}])
 =
 \sum_{k\geq\nu(t)}A_k(t).
\]
Therefore, by \eqref{eq:nu-cutoff-bound},
\eqref{eq:cutoff-local-mass}, and
\eqref{eq:contrary-atom-bound},
\begin{align}
 \sum_{k\geq\nu(t)}A_k(t)\geq
 N_{0,2}(F_t,\varepsilon_{\nu(t)})
 -F_t(\{0\}) \geq
 \delta e^{-c_0(t-t_0)}h
 \geq
 \delta e^{-c_0}h.
 \label{eq:positive-energy-tail-mass}
\end{align}
Here the last inequality follows from
$0\leq t-t_0\leq\tau\leq1$.

\medskip
\noindent\emph{Step 4: Growth of \(\mathcal P\).}
By \cref{lem:truncated-sum},
\begin{equation}\label{eq:P-bound-on-I}
 0\leq\mathcal P(t)\leq\Theta_j(h),
 \qquad t\in I,
\end{equation}
and, for almost every $t\in I$,
\[
 \mathcal P'(t)
 \geq
 \sum_{\substack{k\geq j\\
                  Z_k(t)<\Lambda_k(h)}}
 \frac{A_k(t)^2}{\omega_k(h)}\geq \sum_{k\geq\nu(t)}
 \frac{A_k(t)^2}{\omega_k(h)},
\]
because every $k\geq\nu(t)$ is $h$-incomplete. For every $M\geq\nu(t)$, Cauchy--Schwarz gives
\[
 \left(
 \sum_{k=\nu(t)}^M A_k(t)
 \right)^2
 \leq
 \left(
 \sum_{k=\nu(t)}^M
 \frac{A_k(t)^2}{\omega_k(h)}
 \right)
 \left(
 \sum_{k=\nu(t)}^M\omega_k(h)
 \right).
\]
Letting $M\to\infty$, we obtain
\[
 \sum_{k\geq\nu(t)}
 \frac{A_k(t)^2}{\omega_k(h)}
 \geq
 {
 \left(\displaystyle\sum_{k\geq\nu(t)}A_k(t)\right)^2
 }\left({
 \displaystyle\sum_{k\geq\nu(t)}\omega_k(h)
 }\right)^{-1}.
\]
Since $\nu(t)\geq j$,
\[
 \sum_{k\geq\nu(t)}\omega_k(h)
 \leq\Theta_j(h).
\]
Combining these estimates with
\eqref{eq:positive-energy-tail-mass}, we have
\begin{equation}\label{eq:P-uniform-derivative}
 \mathcal P'(t)
 \geq
 \frac{\delta^2e^{-2c_0}h^2}{\Theta_j(h)}
\end{equation}
for almost every $t\in I$.

\medskip
\noindent\emph{Step 5: Contradiction, persistence, and the time estimate.}
Since $\mathcal P$ is absolutely continuous by
\cref{lem:truncated-sum},  integrating
\eqref{eq:P-uniform-derivative} over $I$ and using
\eqref{eq:formation-interval}, we obtain
\begin{align*}
 \mathcal P(t_0+\tau)-\mathcal P(t_0)
\geq
 \frac{\delta^2e^{-2c_0}h^2}{\Theta_j(h)}\,\tau=
 2\Theta_j(h).
\end{align*}
On the other hand, \eqref{eq:P-bound-on-I} gives
\(
 \mathcal P(t_0+\tau)-\mathcal P(t_0)
 \leq\Theta_j(h).
\)
This contradiction shows that
\eqref{eq:contrary-atom-bound} cannot hold throughout $I$.  Hence there
exists $T\in[t_0,t_0+\tau]$ such that
\[
 F_T(\{0\})
 \geq
 (1-\delta)e^{-c_0(T-t_0)}h>0,
\]
which proves \eqref{eq:atom-lower-bound}.
For every $t\geq T$,
\eqref{eq:atom-persistence} gives
\[
 \begin{aligned}
 F_t(\{0\})\geq e^{-c_0(t-T)}F_T(\{0\})\geq
 (1-\delta)e^{-c_0(t-T)}
 e^{-c_0(T-t_0)}h=
 (1-\delta)e^{-c_0(t-t_0)}h.
 \end{aligned}
\]
Finally, \eqref{eq:Theta-finite} and
\eqref{eq:formation-interval} yield
\[
 \tau_j(h,\delta)
 =
 2e^{2c_0}\delta^{-2}h^{-2}\Theta_j(h)^2
 \lesssim_{h,\delta,N,E,b_0,\eta}
 (1+j)^2\varepsilon_j^{1-\eta}.
\]
Since $\eta<1$, the right-hand side tends to zero as $j\to\infty$.
This proves \eqref{eq:formation-interval-rate}.
\end{proof}
\subsection{Semi-strong relaxation implies condensation and convergence}
\begin{lemma}
\label{lem:late-cutoff}
Let $F_t$ be a conservative isotropic measure solution whose mass and
energy agree with those of $F_{\mathrm{be}}$.  Assume
$\|F_t-F_{\mathrm{be}}\|_1^\circ\to0$ as $t\to\infty$ and $N_c>0$.
Fix $h\in(0,N_c)$ and
$j\geq0$.  Then there exists $t_*=t_*(h,j)\geq0$ such that
\begin{equation}\label{eq:late-cutoff}
 N_{0,2}(F_{t_0},\varepsilon_j)\geq h
 \qquad\text{for every }t_0\geq t_*.
\end{equation}
If for some $\lambda>0$ and $C_\lambda>0$,
$\|F_t-F_{\mathrm{be}}\|_1^\circ\leq
C_\lambda(1+t)^{-\lambda/2}$ for every $t\geq0$, it is enough to choose
$t_*$ so that
\begin{equation}\label{eq:late-cutoff-time}
 C_\lambda(1+t_*)^{-\lambda/2}
 \leq\frac{\varepsilon_j}{2}(N_c-h).
\end{equation}
\end{lemma}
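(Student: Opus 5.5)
The plan is to compare $N_{0,2}(F_{t_0},\varepsilon_j)$ with the corresponding quantity for the equilibrium, using \cref{lem:cutoff-comparison}. First, since $F_{\mathrm{be}}$ has the atom $N_c\delta_0$ and $\varphi_{\varepsilon_j}(0)=1$, $\varphi_{\varepsilon_j}\geq0$, we get
\[
 N_{0,2}(F_{\mathrm{be}},\varepsilon_j)
 =N_c+\int_{(0,\varepsilon_j]}\Bigl(1-\frac{x}{\varepsilon_j}\Bigr)^2 g_{\rm be}(x)\sqrt x\,dx
 \geq N_c .
\]
Because the solution is conservative and has the same mass as $F_{\mathrm{be}}$, \cref{lem:cutoff-comparison} applies with $F=F_{t_0}$, $G=F_{\mathrm{be}}$, $\varepsilon=\varepsilon_j$. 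It gives
\[
 N_{0,2}(F_{t_0},\varepsilon_j)
 \geq N_c-\frac{2}{\varepsilon_j}\norm{F_{t_0}-F_{\mathrm{be}}}_1^\circ .
\]

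For the qualitative statement, we use that $\norm{F_t-F_{\mathrm{be}}}_1^\circ\to0$. Since $N_c-h>0$, this lets us choose $t_*$ such that
\[
 \norm{F_{t_0}-F_{\mathrm{be}}}_1^\circ\leq\frac{\varepsilon_j}{2}(N_c-h)
 \qquad\text{for all } t_0\geq t_* .
\]
For every such $t_0$, the right-hand side of the previous display is then at least $N_c-(N_c-h)=h$, which proves \eqref{eq:late-cutoff}.

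For the quantitative statement, suppose $t_*$ satisfies \eqref{eq:late-cutoff-time}. For any $t_0\geq t_*$, the map $t\mapsto C_\lambda(1+t)^{-\lambda/2}$ is nonincreasing, so
\[
 \norm{F_{t_0}-F_{\mathrm{be}}}_1^\circ\leq C_\lambda(1+t_*)^{-\lambda/2}\leq\frac{\varepsilon_j}{2}(N_c-h),
\]
and the same computation applies. There is no real obstacle here. The only points to check are that $N(F_{t_0})=N(F_{\mathrm{be}})$, which holds by conservation and is needed for \cref{lem:cutoff-comparison}, and that the equilibrium's cutoff mass is bounded below by its atom.
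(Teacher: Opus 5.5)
Your proposal is correct and follows the paper's proof essentially step for step. You bound the equilibrium cutoff mass below by its atom $N_c$, apply \cref{lem:cutoff-comparison} using mass conservation, choose $t_*$ from semi-strong convergence, and handle the quantitative case by monotonicity of $C_\lambda(1+t)^{-\lambda/2}$.
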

\begin{proof}
Since $\varphi_{\varepsilon_j}(0)=1$ and
$\varphi_{\varepsilon_j}\geq0$, we have
\[
 N_{0,2}(F_{\mathrm{be}},\varepsilon_j)
 \geq F_{\mathrm{be}}(\{0\})
 =N_c.
\]
Since $F_{t_0}$ and $F_{\mathrm{be}}$ have the same total mass, \cref{lem:cutoff-comparison} gives, for every $t_0\geq0$,
\begin{equation}\label{eq:late-cutoff-basic}
 N_{0,2}(F_{t_0},\varepsilon_j)
 \geq
 N_c-\frac{2}{\varepsilon_j}
 \norm{F_{t_0}-F_{\mathrm{be}}}_1^\circ.
\end{equation}
Since
$\norm{F_t-F_{\mathrm{be}}}_1^\circ\to0$ as $t\to\infty$, there exists
$t_*=t_*(h,j)$ such that
\[
 \norm{F_{t_0}-F_{\mathrm{be}}}_1^\circ
 \leq\frac{\varepsilon_j}{2}(N_c-h)
 \qquad\text{for every }t_0\geq t_*.
\]
Substituting this estimate into \eqref{eq:late-cutoff-basic} yields
$N_{0,2}(F_{t_0},\varepsilon_j)\geq h$ for every $t_0\geq t_*$, which
proves \eqref{eq:late-cutoff}.
Under the assumed semi-strong estimate, if $t_*$ satisfies
\eqref{eq:late-cutoff-time}, then for every $t_0\geq t_*$,
\[
 \frac{2}{\varepsilon_j}
 \norm{F_{t_0}-F_{\mathrm{be}}}_1^\circ
 \leq
 \frac{2C_\lambda}{\varepsilon_j}(1+t_0)^{-\lambda/2}
 \leq
 \frac{2C_\lambda}{\varepsilon_j}(1+t_*)^{-\lambda/2}
 \leq N_c-h.
\]
The conclusion again follows from \eqref{eq:late-cutoff-basic}.
\end{proof}
\begin{proof}[Proof of \cref{thm:conditional-main}]
Set $c_0=\sqrt{NE}$.
\paragraph{1. Finite-time condensation and persistence}
Take $h=N_c/2$ and $\delta=1/4$.  Since $0<N_c\leq N$, these parameters
satisfy the hypotheses of \cref{thm:low-energy-to-condensate}.  By
\eqref{eq:Theta-finite} and \eqref{eq:formation-interval}, we may choose
$j\geq k_0$ so large that $\tau_j(h,\delta)\leq1$.  The assumed
semi-strong convergence and \cref{lem:late-cutoff} provide
$t_*=t_*(h,j)\geq0$ such that
\(
 N_{0,2}(F_{t_*},\varepsilon_j)\geq h.
\)
Applying \cref{thm:low-energy-to-condensate} with $t_0=t_*$ gives a
finite time $T\in[t_*,t_*+\tau_j(h,\delta)]$ such that
\begin{equation}\label{eq:intrinsic-final-atom}
 N_c(T)=F_T(\{0\})
 \geq\frac{3N_c}{8}e^{-c_0(T-t_*)}>0.
\end{equation}
For every $t\geq T$,
\eqref{eq:atom-persistence} yields
\[
 N_c(t)=F_t(\{0\})
 \geq e^{-c_0(t-T)}F_T(\{0\})
 =e^{-\sqrt{NE}(t-T)}N_c(T)>0.
\]
This proves \eqref{eq:atom-positive-main} and
\eqref{eq:atom-persistence-main}.  If $F_0(\{0\})=0$, then
\eqref{eq:intrinsic-final-atom} also implies $T>0$.
\paragraph{2. The asymptotic lower bound for the condensate}
Fix $h\in(0,N_c)$ and $\delta\in(0,1/2)$.  By
\eqref{eq:formation-interval-rate}, every sufficiently large
$j\geq k_0$ satisfies $\tau_j(h,\delta)\leq1$.  For each such $j$, let
$t_*=t_*(h,j)$ be supplied by \cref{lem:late-cutoff}.  If
$t\geq t_*+\tau_j(h,\delta)$, set
$t_0=t-\tau_j(h,\delta)$.  Then $t_0\geq t_*$ and
\(
 N_{0,2}(F_{t_0},\varepsilon_j)\geq h.
\)
Applying \cref{thm:low-energy-to-condensate} on
$[t_0,t_0+\tau_j(h,\delta)]=[t_0,t]$ and evaluating its persistent lower
bound at time $t$ gives
\begin{equation}\label{eq:sliding-lower}
 F_t(\{0\})
 \geq
 (1-\delta)e^{-c_0\tau_j(h,\delta)}h.
\end{equation}
For each sufficiently large fixed $j$, this estimate holds for every
sufficiently large $t$.  Consequently,
\[
 \liminf_{t\to\infty}F_t(\{0\})
 \geq
 (1-\delta)e^{-c_0\tau_j(h,\delta)}h.
\]
Letting $j\to\infty$ and using
$\tau_j(h,\delta)\to0$, we obtain
\[
 \liminf_{t\to\infty}F_t(\{0\})
 \geq(1-\delta)h.
\]
Since this holds for every $h\in(0,N_c)$ and $\delta\in(0,1/2)$,
first letting $\delta\downarrow0$ and then $h\uparrow N_c$ gives
\begin{equation}\label{eq:atom-liminf}
 \liminf_{t\to\infty}F_t(\{0\})\geq N_c.
\end{equation}
\paragraph{3. Strong convergence}
Let $\varphi\in C_b([0,\infty))$ and $a>0$.  Since $N(F_t)=N(F_{\mathrm{be}})$,
\[
 \int\varphi\,d(F_t-F_{\mathrm{be}})
 =
 \int\bigl(\varphi-\varphi(0)\bigr)\,d(F_t-F_{\mathrm{be}}).
\]
Splitting the last integral at $a$ gives
\begin{equation}\label{eq:narrow-from-semistrong}
 \begin{aligned}
 \left|
  \int_{[0,\infty)}
  \varphi\,d(F_t-F_{\mathrm{be}})
 \right|
 \leq
 2N\sup_{0\leq x\leq a}
 |\varphi(x)-\varphi(0)|+
 \frac{2\|\varphi\|_\infty}{a}
 \norm{F_t-F_{\mathrm{be}}}_1^\circ.
 \end{aligned}
\end{equation}
For fixed $a$, the second term tends to zero by the assumed semi-strong
convergence.  Taking the upper limit as $t\to\infty$ and then letting
$a\downarrow0$ shows that $F_t$ converges narrowly to
$F_{\mathrm{be}}$.
Choose a sequence $t_n\to\infty$ such that
\[
 F_{t_n}(\{0\})\longrightarrow
 \limsup_{t\to\infty}F_t(\{0\}).
\]
Since $\{0\}$ is closed, the Portmanteau theorem gives
\[
 \limsup_{t\to\infty}F_t(\{0\})
 \leq F_{\mathrm{be}}(\{0\})=N_c.
\]
Together with \eqref{eq:atom-liminf}, this proves
\[
 N_c(t)=F_t(\{0\})\longrightarrow N_c.
\]
Finally,
\eqref{eq:CL-strong-criterion} gives
\[
 \norm{F_t-F_{\mathrm{be}}}_1
 \leq
 2\left|F_t(\{0\})-N_c\right|
 +C\bigl(
   \norm{F_t-F_{\mathrm{be}}}_1^\circ
  \bigr)^{1/3}
 \longrightarrow0.
\]
This completes the proof.
\end{proof}
\begin{proof}[Proof of \cref{thm:CL-main}]
Fix an arbitrary $\lambda\in(1/20,1/19)$.  By
\cref{prop:CL-relaxation}, there exist a conservative isotropic measure
solution $(F_t)_{t\geq0}$ with initial datum $F_0$ and a constant
$C_\lambda>0$ such that
\[
 \norm{F_t-F_{\mathrm{be}}}_1^\circ
 \leq C_\lambda(1+t)^{-\lambda/2},
 \qquad t\geq0.
\]
Thus \eqref{eq:semistrong-rate-main} holds.  Since $\lambda>0$, this
estimate also gives
\[
 \norm{F_t-F_{\mathrm{be}}}_1^\circ\longrightarrow0
 \qquad\text{as }t\to\infty.
\]
Moreover, $\overline T/\overline T_c<1$ and
\eqref{eq:Nc-positive} give $N_c=F_{\mathrm{be}}(\{0\})>0$.  All the
hypotheses of \cref{thm:conditional-main} are therefore satisfied.  That
theorem provides a finite time $T\geq0$ such that
\[
 F_T(\{0\})>0,
 \qquad
 F_t(\{0\})
 \geq e^{-\sqrt{NE}(t-T)}F_T(\{0\})>0,
 \qquad t\geq T,
\]
and
\[
 F_t(\{0\})\longrightarrow N_c,
 \qquad
 \norm{F_t-F_{\mathrm{be}}}_1\longrightarrow0
 \qquad\text{as }t\to\infty.
\]
These are precisely \eqref{eq:CL-atom-main} and
\eqref{eq:CL-convergence-main}.  Since $\lambda$ was arbitrary, the proof
is complete.
\end{proof}

\section*{Acknowledgments}
S. Luo gratefully acknowledges the hospitality of the Department of
Mathematics at Duke University during his visit in summer 2026, where
part of this work was carried out. 

\section*{Statements and Declarations}
\noindent\textbf{Funding.} No funding was received for conducting this study.

\medskip
\noindent\textbf{Competing interests.}
The authors have no relevant interests to
disclose.

\medskip
\noindent\textbf{Author contributions.}
Jian-Guo Liu proposed the research direction and supervised the project.
Siwei Luo developed the principal new ideas and main arguments, carried
out the mathematical analysis, and wrote the original draft. Both
authors discussed the results, reviewed and edited the manuscript, and
approved the final version.

\medskip
\noindent\textbf{Data availability.} We do not analyse or generate any datasets, because our work proceeds
within a theoretical and mathematical approach.

\medskip
\noindent\textbf{Use of generative AI.} During the preparation of this manuscript, the authors used OpenAI's ChatGPT to assist with literature searches and English-language
editing. All AI-assisted
language was reviewed and revised by the authors, who take full
responsibility for the content and arguments of the
manuscript.
\bibliographystyle{amsalpha}
\bibliography{references}
\end{document}